\def\gg{\mathfrak{g}}
\def\gh{\mathfrak{h}}
\def\gi{\mathfrak{i}}
\def\gk{\mathfrak{k}}
\def\gl{\mathfrak{l}}
\def\gm{\mathfrak{m}}
\def\gn{\mathfrak{n}}
\def\go{\mathfrak{o}}
\def\gp{\mathfrak{p}}
\def\gs{\mathfrak{s}}
\def\gt{\mathfrak{t}}
\def\gu{\mathfrak{u}}
\def\gz{\mathfrak{z}}
\def\C{\mathbb{C}}
\def\F{\mathbb{F}}
\def\H{\mathbb{H}}
\def\R{\mathbb{R}}
\def\Z{\mathbb{Z}}
\def\cS{\mathcal{S}}
\def\Ad{{\rm Ad}}
\def\ad{{\rm ad}\,}
\renewcommand{\thesection}{\arabic{section}}
\renewcommand{\thetable}{{\large \thesection.\arabic{equation}}}
\newtheorem{lemma}[equation]{Lemma}
\newtheorem{definition}[equation]{Definition}
\def\sideremark#1{\ifvmode\leavevmode\fi\vadjust{\vbox to0pt{\vss
 \hbox to 0pt{\hskip\hsize\hskip1em
\vbox{\hsize2cm\tiny\raggedright\pretolerance10000 
 \noindent #1\hfill}\hss}\vbox to8pt{\vfil}\vss}}} 
\begin{document}

\title{Semisimple Weakly Symmetric Pseudo--Riemannian Manifolds}

\author{Zhiqi Chen}
\address{School of Mathematical Sciences and LPMC \\ Nankai University \\ Tianjin 300071, P.R. China} \email{chenzhiqi@nankai.edu.cn}

\author{Joseph A. Wolf}
\address{Department of Mathematics \\ University of California, Berkeley \\ CA 94720--3840, U.S.A.} \email{jawolf@math.berkeley.edu}

\subjclass[2000]{}

\keywords{}

\begin{abstract}
We develop the classification of weakly symmetric pseudo--riemannian manifolds
$G/H$ where $G$ is a semisimple Lie group and $H$ is a reductive subgroup.
We derive the classification from the cases where $G$ is compact, and then
we discuss the (isotropy) representation of $H$ on the tangent
space of $G/H$ and the signature of the invariant pseudo--riemannian metric.
As a consequence we obtain the classification of semisimple weakly
symmetric manifolds of Lorentz signature $(n-1,1)$ and trans--lorentzian
signature $(n-2,2)$.
\end{abstract}

\maketitle

\section{Introduction}\label{sec1}
\setcounter{equation}{0}

There have been a number of important extensions of the theory of
riemannian symmetric spaces.  Weakly symmetric spaces, introduced
by A. Selberg \cite{S1956}, play key roles in number theory, riemannian geometry
and harmonic analysis.  See \cite{W2007}.  Pseudo--riemannian
symmetric spaces,
including semisimple symmetric spaces, play central but complementary roles in
number theory, differential geometry and relativity, Lie group
representation theory and harmonic analysis.  Much of the activity there has
been on the Lorentz cases, which are of particular interest in physics.  Here
we work out the classification of weakly symmetric pseudo--riemannian manifolds
$G/H$ where $G$ is a semisimple Lie group and $H$ is a reductive subgroup.
We do this in a way that allows us to derive the signatures of all
invariant pseudo--riemannian metrics.  (All such metrics are necessarily 
weakly symmetric.)
In particular we obtain explicit listings for invariant pseudo--riemannian 
metrics of riemannian
(Table 5.1), lorentzian (Table 5.2) and trans--lorentzian (Table 5.3) signature.

This treatment of weakly symmetric pseudo--riemannian manifolds is a major
extension of the classical paper of M. Berger \cite{B1957}.  Even in the 
riemannian case it adds new information: the signatures of invariant 
metrics that may be non--riemannian.  The lorentzian case is of course
of physical interest.  And the trans--lorentzian case is related to
conformal and other parabolic structures as described in \cite{CS2009}.

Our analysis in the weakly symmetric setting uses the classifications
of Kr\" amer \cite{K1979}, Brion \cite{B1987}, Mikityuk \cite{M1986}
and Yakimova \cite{Y2005, Y2006} for the weakly symmetric riemannian
manifolds.  We pass from these weakly symmetric riemannian cases
to our weakly symmetric pseudo--riemannian classification by a combination
of semisimple Lie group methods and ideas extending those of
Gray and Wolf \cite{WG1968a, WG1968b}.

To start, we show how a weakly symmetric pseudo--riemannian manifold
$(M,ds^2)$, $M = G/H$ with $G$ semisimple and $H$ reductive in $G$, belongs
to a family of such spaces associated to a compact weakly symmetric
riemannian manifold $M_u = G_u/H_u$\,.  There $G_u$ and $H_u$ are compact
real forms of the complex Lie groups $G_\C$ and $H_\C$\,. More
generally, whenever $G_u$ is a compact connected semisimple Lie group and
$H_u$ is a closed connected subgroup, we have the complexification
$(G_u)_\C/(H_u)_\C$ of $G_u/H_u$\,.
\begin{definition}\label{family-def}
{\rm The  {\bf real form family} of $G_u/H_u$ consists of
$(G_u)_\C/(H_u)_\C$ and all $G_0/H_0$ with the same complexification
$(G_u)_\C/(H_u)_\C$.
} \hfill $\diamondsuit$
\end{definition}

If $G_0/H_0$ is in the real form family of
$G_u/H_u$\,, we have a Cartan involution $\theta$ of $G_0$ that
preserves $H_0$ and $(G_u,H_u)$ is the corresponding compact real
form of $(G_0,H_0)$.  But the point here is that this is reversible:

\begin{lemma}\label{family-desc}
Let $G_u$ be a compact connected semisimple Lie group and
$H_u$ a closed connected subgroup.  Let $\sigma$ be an involutive
automorphism of $G_u$ that preserves $H_u$.  Then there is a unique
$G_0/H_0$ in the real form family of $G_u/H_u$ such that $G_0$ is
simply connected, $H_0$ is connected, and $\sigma = \theta|_{G_u}$
where $\theta$ is the holomorphic extension to $(G_u)_\C$ of a 
Cartan involution of $G_0$ that preserves $H_0$\,.
Up to covering, every space $G_0/H_0$ in the real form family of $G_u/H_u$
is obtained in this way.
\end{lemma}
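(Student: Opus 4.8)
The plan is to reduce the statement to \'E. Cartan's correspondence between real forms of a complex semisimple Lie algebra and involutions of its compact real form, and then to upgrade that correspondence to the group level while carrying the subgroup along. First, at the Lie algebra level: write $\gg_u = \mathrm{Lie}(G_u)$, $\gh_u = \mathrm{Lie}(H_u)$, and complexify to $\gg_\C \supset \gh_\C$. Extend $d\sigma$ to a holomorphic involution $\sigma$ of $\gg_\C$ preserving $\gh_\C$, let $\tau$ be the conjugation of $\gg_\C$ over $\gg_u$ (it preserves $\gh_\C$ and commutes with $\sigma$, since $\sigma$ is real on $\gg_u$), and set $\mu := \sigma\tau$. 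Then $\gg_0 := (\gg_\C)^\mu$ is a real form of $\gg_\C$, $\gh_0 := \gg_0 \cap \gh_\C$ is a real form of $\gh_\C$ inside $\gg_0$, and from the $(\pm1)$--eigenspace decomposition $\gg_u = \gk \oplus \gp$ of $d\sigma$ one gets $\gg_0 = \gk \oplus i\gp$ with $\gk = \gg_u \cap \gg_0$ and $\theta := \sigma|_{\gg_0}$ a Cartan involution of $\gg_0$ that preserves $\gh_0$ and restricts to a Cartan involution there. I would note that $\gh_u$, being the Lie algebra of a compact group, is reductive in $\gg_u$, so $\gh_\C$ is reductive in $\gg_\C$ and $\gh_0$ in $\gg_0$; this is what makes the relevant subgroups closed.

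Next, existence at the group level: let $G_0$ be simply connected with Lie algebra $\gg_0$ and $H_0$ its connected subgroup with Lie algebra $\gh_0$, closed by the reductivity just noted. Since $G_0$ is simply connected, $\theta$ integrates to an involution $\Theta$ of $G_0$, which is a Cartan involution (its fixed point group is connected with Lie algebra $\gk$) and preserves $H_0$. I would then pass to the universal complexification $G_\C$ of $G_0$ (the simply connected complex group with Lie algebra $\gg_\C$), lift $\sigma,\tau,\mu$ to it, identify the compact real form $(G_\C)^\tau$ with $G_u$ -- up to a covering of $G_u/H_u$, which is harmless -- so that $(G_u)_\C = G_\C$, and take $\theta$ to be the holomorphic extension of $\Theta$ to $(G_u)_\C$; then $\theta|_{G_u} = \sigma$ and $(G_0)_\C/(H_0)_\C = (G_u)_\C/(H_u)_\C$, so $G_0/H_0$ is the desired member of the real form family. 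For uniqueness I would observe that $\tau$ is uniquely determined by $\gg_u$ and $\sigma$ by $d\sigma$, hence $\gg_0 = (\gg_\C)^{\sigma\tau}$ and $\gh_0 = \gg_0 \cap \gh_\C$ are forced; together with the normalizations ``$G_0$ simply connected, $H_0$ connected'' this pins down $G_0/H_0$ up to isomorphism. For the last sentence of the lemma I would run the construction in reverse: given $G_0/H_0$ in the real form family, reductivity of $\gh_0$ in $\gg_0$ yields a Cartan involution $\theta$ of $\gg_0$ stabilizing $\gh_0$ (Mostow), its holomorphic extension stabilizes the compact real form $\gk \oplus i\gp$ of $\gg_\C$ and defines an involution $\sigma$ of the corresponding compact $G_u$ preserving the relevant $H_u$, and $G_0/H_0$ then coincides, up to covering, with the space produced from $(G_u,H_u,\sigma)$.

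I would not spell out the routine Lie algebra computations (commutation of $\sigma$ and $\tau$, the eigenspace bookkeeping, the fact that $\theta|_{\gh_0}$ is a Cartan involution). The main obstacle is the bookkeeping of fundamental groups and centers in the group-level statements -- matching $G_u$, $G_0$, and the complexification under the ``simply connected / connected'' normalizations -- since $G_u$ need not be simply connected and the map of the simply connected $G_0$ into the complexification need not be injective; this is precisely why the lemma claims uniqueness only for the specified representative and only ``up to covering'' in the converse. A secondary point needing care is the closedness of $H_0$ and the compatibility of the Cartan involutions of $\gh_0$ and $\gg_0$, which I would extract from the standard theory of reductive subgroups of semisimple Lie groups.
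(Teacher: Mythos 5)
The paper offers no proof of this lemma at all---it is stated in the introduction as an instance of the classical Cartan correspondence between involutions of a compact real form and real forms of the complexification---and your argument is precisely the standard one the authors have in mind, carried out correctly: the duality $\gg_u=\gk\oplus\gp\mapsto\gg_0=\gk\oplus i\,\gp$ applied simultaneously to $\gg_u$ and to the $\sigma$--stable subalgebra $\gh_u$, integrated on the simply connected $G_0$, with uniqueness forced at the Lie algebra level because $\tau$ and $\sigma$ determine $\mu=\sigma\tau$, and with the converse supplied by Mostow's theorem on Cartan involutions preserving a reductive subgroup. The one justification you should tighten is the closedness of $H_0$: reductivity of $\gh_0$ in $\gg_0$ by itself does not force the corresponding connected subgroup to be closed (a dense one--parameter wind in a torus is a reductive subalgebra), so instead use that $H_\C=(H_u)_\C$ is a closed algebraic subgroup of $G_\C$ and that $H_0$ is the identity component of the preimage of $H_\C$ under $G_0\to G_\C$, hence closed.
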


In Section \ref{sec2} we recall Kr\" amer's classification \cite{K1979}
of the spaces $M_u = G_u/H_u$ for the cases where
$M_u$ is not symmetric but is weakly symmetric with $G_u$ simple.
See (\ref{kraemer-classn}).
Note that in all but two cases there is an ``intermediate'' subgroup
$K_u$\,, where $H_u \subsetneqq K_u \subsetneqq G_u$ with both
$G_u/K_u$ and $K_u/H_u$ symmetric.  In the cases where an intermediate
group $K_u$ is present we work out the real form families in steps,
from $H_u$ to $K_u$ to $G_u$\,, using commuting involution methods of
Cartan, Berger, and Wolf and Grey.  When no intermediate group $K_u$
is available we manage the calculation with some basic information on $G_2$\,,
$Spin(7)$ and $Spin(8)$.

In Section \ref{sec3} we calculate the $H$--irreducible subspaces of the
real tangent space of spaces $M = G/H$ found in Section \ref{sec2}, and in
each of the twelve cases there we work out the possible signatures of the 
$G$--invariant pseudo--riemannian metrics.  The results are gathered in
Table 3.6.

In Section \ref{sec4} we recall the Brion--Mikityuk classification
\cite{B1987, M1986} as formulated by Yakimova \cite{Y2005, Y2006}.
See (\ref{brion-compact}) below.  The exposition is taken from \cite{W2007}.
Those are the cases where $M_u$ is weakly symmetric and irreducible,
$G_u$ is semisimple but not simple, and $G_u/H_u$ is principal.
In this context, $G_u$ semisimple,
``principal'' just means that the center $Z_{H_\C}$
of $H_\C$ is the product of its intersection with the complexifications of
the centers of the simple factors of $G_\C$.  For the first eight of the nine 
cases of (\ref{brion-compact}) we work out the resulting spaces $M = G/H$ 
of the real form family, the $H$--irreducible subspaces of the
real tangent space, and the resulting contributions to the signatures 
of the $G$--invariant pseudo--riemannian metrics.  The results are gathered in
Table 4.12.  The ninth case of (\ref{brion-compact}) is a pattern rather
than a formula; there we obtain the signature information by applying our 
notion of ``riemannian unfolding'' to the information contained in Tables 
3.6 and 4.12.

Finally, in Section \ref{sec5} we extract some signature information
from Berger's
\cite[\S50, Table II on page 157]{B1957}, and combine it with certain
cases from our Tables 3.6 and 4.2, to classify the semisimple 
pseudo--riemannian weakly symmetric spaces of riemannian signature
$(n,0)$, lorentzian signature $(n-1,1)$ and trans--lorentzian signature
$(n-2,2)$.  It is interesting to note the prevalence of riemannian 
signature here.  The examples of signature $(n-2,2)$ are also quite
interesting: they are related to conformal and other parabolic geometries
(\cite{CS2009}). This data is collected in Tables 5.1, 5.2 and 5.3.

Some of the methods here extend classifications of Gray and Wolf 
\cite{WG1968a, WG1968b}, concerning the isotropy representation of $H_0$ on
$\gg_0/\gh_0$ where $\gh_0$ is the fixed point set of a semisimple
automorphism of a semisimple algebra $\gg_0$.  Those papers, however,
are only peripherally concerned with signatures of invariant metrics.  
There also is a small overlap with the papers
\cite{KKPS2016, KKPS2017} of Knop, Kr\"otz, Pecher and Schlichtkrull 
on reductive real spherical pairs, which are oriented toward algebraic
geometry and not concerned with signatures of invariant metrics; we learned 
of those papers when most of this paper was completed.  

\section{Real Form Families for $G_u$ Simple.}\label{sec2}
\setcounter{equation}{0}

For the cases where $M_u$ is a riemannian symmetric space we have the
classification of \' Elie Cartan and its extension by Marcel Berger
\cite{B1957}, which we need not repeat here.
\vfill\pagebreak

For the cases where $M_u$ is not symmetric but is weakly symmetric with
$G_u$ simple, the Kr\" amer classification is given by
\begin{equation}\label{kraemer-classn}
\begin{tabular}{|r|l|l|l|l|} \hline
\multicolumn{5}{| c |}{Weakly Symmetric Coset Spaces of a Compact
        Connected Simple Lie Group} \\
\hline
\multicolumn{1}{|l}{} &
\multicolumn{3}{ c |}{$M_u = G_u/H_u$ weakly symmetric} & \multicolumn{1}{ c |}
{$G_u/K_u$ symmetric} \\
\hline \hline
\multicolumn{1}{|l}{} &
\multicolumn{1}{c}{$G_u$} & \multicolumn{1}{|c}{$H_u$}  &
        \multicolumn{1}{|c}{conditions} &
        \multicolumn{1}{|c|}{$K_u$ with $H_u \subset K_u \subset G_u$} \\ \hline \hline
\multicolumn{1}{|l}{} &
\multicolumn{3}{ c |}{riemannian symmetric spaces with symmetry $s$}
  & \multicolumn{1}{ c |}{($H_u = K_u$)} \\ \hline\hline
\multicolumn{1}{|l}{} &
\multicolumn{4}{ c|}{circle bundles over hermitian symmetric spaces
dual to a non--tube domain:}
  \\ \hline
$(1)$ & $SU(m+n)$ & $SU(m) \times SU(n)$
        & $m > n \geqq 1$ & $S[U(m) \times U(n)]$ \\
$(2)$ & $SO(2n)$  & $SU(n)$ & $n$ odd, $n \geqq 5$
        & $U(n)$ \\
$(3)$ &$E_6$& $Spin(10)$ & & $Spin(10)\times Spin(2)$  \\ \hline \hline
$(4)$ & $SU(2n+1)$ & $Sp(n)$   & $n \geqq 2$
        & $U(2n) = S[U(2n)\times U(1)]$ \\
$(5)$ & $SU(2n+1)$ & $Sp(n) \times U(1)$ & $n \geqq 2$
        & $U(2n) = S[U(2n)\times U(1)]$ \\ \hline \hline
\multicolumn{1}{|l}{} &
\multicolumn{3}{ c }{constant positive curvature spheres:}
  & \multicolumn{1}{ c |}{} \\ \hline
$(6)$ & $Spin(7)$ & $G_2$ & & (there is none) \\
$(7)$ & $G_2$ & $SU(3)$ & & (there is none) \\ \hline \hline
\multicolumn{1}{|l}{} &
\multicolumn{3}{ c }{weakly symmetric spaces of Cayley type:}
  & \multicolumn{1}{ c |}{} \\ \hline
$(8)$ & $SO(10)$ & $Spin(7) \times SO(2)$ & & $SO(8)\times SO(2)$ \\
$(9)$ & $SO(9)$ & $Spin(7)$ & & $SO(8)$ \\
$(10)$ & $Spin(8)$ & $G_2$ & & $Spin(7)$ \\ \hline \hline
$(11)$ & $SO(2n+1)$ & $U(n)$ & $n \geqq 2$ & $SO(2n)$ \\
$(12)$ & $Sp(n)$ & $Sp(n-1) \times U(1)$ & $n \geqq 3$
        & $Sp(n-1) \times Sp(1)$ \\ \hline
\end{tabular}
\end{equation}

In order to deal with entries other than $(6)$ and $(7)$ we rely on

\begin{lemma}\label{kr-symm}
Let $M_u = G_u/H_u$ be one of the entries in {\rm (\ref{kraemer-classn})}
excluding entries $(6)$ and $(7)$, so we have the corresponding symmetric space
$G_u/K_u$ where $H_u \subset K_u \subset G_u$\,.  Let $\sigma$ be an
automorphism of $\gh_u$ that extends to $\gg_u$\,.  Then
$\sigma(\gk_u) = \gk_u$\,.  Further, in the riemannian metric on $M_u$
defined by the negative of the Killing form of $\gg_u$\,, $K_u/H_u$
is a totally geodesic submanifold of $M_u$ and itself is a riemannian
symmetric space.
\end{lemma}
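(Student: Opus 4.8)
The plan is to reduce the first assertion to a fact about $\gh_u$--representations. Let $B$ be the Killing form of $\gg_u$ and put $\gp_u = \gh_u^{\perp}$, $\gm_u = \gk_u^{\perp}$ and $\gq_u = \gk_u\cap\gp_u$, all $B$--orthogonal complements, so that $\gg_u = \gh_u\oplus\gq_u\oplus\gm_u$ with $\gk_u = \gh_u\oplus\gq_u$. First extend $\sigma$ to an automorphism of $\gg_u$, still called $\sigma$. It is $B$--orthogonal and fixes $\gh_u$ setwise, hence preserves $\gp_u$; and since $[X,\sigma v] = \sigma[\sigma^{-1}X,v]$ for $X\in\gh_u$, $v\in\gq_u$, the subspace $\sigma(\gq_u)$ is again an $\ad(\gh_u)$--submodule of $\gp_u$, isomorphic to $\gq_u$ with the action precomposed by $\sigma^{-1}|_{\gh_u}$. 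As $\gh_u$ is compact, $\gp_u$ is completely reducible. Thus it is enough to verify, entry by entry in {\rm (\ref{kraemer-classn})}, that \emph{$\gm_u$ and $\gq_u$ have no irreducible $\gh_u$--constituent in common, and that this remains true after twisting $\gq_u$ by an arbitrary automorphism of $\gh_u$}: then the $\gh_u$--equivariant projection $\gp_u\to\gm_u$ annihilates $\sigma(\gq_u)$, so $\sigma(\gq_u)\subseteq\gq_u$, equality holds by invertibility, and $\sigma(\gk_u) = \gh_u\oplus\gq_u = \gk_u$. For entries $(1)$--$(3)$ this is trivial, since there $\gh_u$ is semisimple and $\gq_u = \gz_{\gg_u}(\gh_u) = \gp_u^{\,\gh_u}$, a subspace any automorphism of $\gg_u$ fixing $\gh_u$ must preserve.

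For the rest one reads off $\gq_u$ as the isotropy representation of $K_u/H_u$ and $\gm_u$ as the restriction to $\gh_u$ of that of $G_u/K_u$. In $(4)$, $(5)$ the module $\gq_u$ is $\mathfrak{su}(2n)/\mathfrak{sp}(n)$ (a $\Lambda^2$--type $\mathfrak{sp}(n)$--module, with a central line adjoined in $(4)$) while $\gm_u$ restricts to the standard symplectic module; in $(8)$, $(9)$, $\gq_u\cong\R^7$ is the vector representation of $Spin(7)$ while $\gm_u$ restricts to the $8$--dimensional spin representation (in $(8)$ also carrying a nontrivial rotation of the central $SO(2)$, on which $\gq_u$ is trivial); in $(11)$, $(12)$ a central circle in $\gh_u$ acts nontrivially on $\gm_u$ but trivially on $\gq_u$. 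In all of these $\Out(\gh_u)$ is trivial or of order two and no twist produces a coincidence, so the projection argument applies. The one exceptional entry is $(10)$, where $\gg_u = \mathfrak{so}(8)$, $\gk_u = \mathfrak{spin}(7)$, $\gh_u = \gg_2$ and $\gq_u\cong\gm_u\cong\R^7$ as $\gg_2$--modules; here representation theory does not separate the two summands and one argues directly, first absorbing an inner automorphism of $\gg_2\subset\gk_u$ to reduce to $\sigma|_{\gg_2} = \mathrm{id}$ (so $\sigma|_{\gp_u}\in\operatorname{GL}_{\gg_2}(\gp_u)\cong\operatorname{GL}_2(\R)$) and then using the explicit structure of $\gg_2\subset Spin(7)\subset Spin(8)$ together with triality: $G_2$ lies in all three copies of $Spin(7)$ permuted by triality, so $\sigma$ permutes these three subalgebras, and being involutive it fixes one of them, which is then taken as $\gk_u$.

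For the geometric statements, equip $M_u$ with the metric induced by $-B$; this is the normal homogeneous metric, whose geodesics through the base point $o$ are exactly the curves $t\mapsto\exp(tX)\cdot o$, $X\in\gp_u$. If $p = k\cdot o$ with $k\in K_u$ and $w\in T_o(K_u\cdot o) = \gq_u$, then the geodesic from $p$ with velocity $dk(w)$ is $t\mapsto k\exp(tw)\cdot o\in K_u\cdot o$; hence $K_u\cdot o = K_u/H_u$ is totally geodesic, with induced metric $-B|_{\gq_u}$ --- equivalently, $\gk_u$ being a subalgebra containing $\gh_u$ makes $\gq_u$ a Lie triple system. Finally $\gq_u$ is $K_u$--irreducible up to central summands, so $-B|_{\gq_u}$ is the essentially unique invariant metric, and as a riemannian manifold $K_u/H_u$ is: a circle in $(1)$--$(3)$; $SU(2n)/Sp(n)$ times a circle, resp.\ $SU(2n)/Sp(n)$, in $(4)$, $(5)$; a round $S^7$ in $(8)$, $(9)$, $(10)$ (using $SO(8)/Spin(7)\cong S^7$ and $Spin(7)/G_2\cong S^7$); $SO(2n)/U(n)$ in $(11)$; a round $S^2$ in $(12)$. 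Each of these is a riemannian symmetric space, so the proof is complete.

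The main obstacle is the entry--by--entry branching of the second paragraph, and inside it entry $(10)$: because $\gq_u\cong\gm_u$ as $\gg_2$--modules, no soft representation--theoretic argument separates $\gq_u$ from $\gm_u$, and one has to use the octonionic realization of $G_2\subset Spin(7)\subset Spin(8)$ and the $S_3$ of triality, in particular making the choice of $K_u$ compatible with $\sigma$. The remaining entries are routine once $\Lambda^2$, spin, half--spin and the standard and quaternionic modules, together with their restrictions, are written out and the (trivial or order--two) outer automorphism groups are accounted for.
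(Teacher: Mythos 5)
Your reduction of the first assertion to a branching statement --- that $\gq_u=\gk_u\cap\gh_u^\perp$ and $\gm_u=\gk_u^\perp$ share no irreducible $\gh_u$--constituent even after twisting by $\sigma|_{\gh_u}$, so that the equivariant projection $\gp_u\to\gm_u$ kills $\sigma(\gq_u)$ --- is sound and is a genuinely different mechanism from the paper's. The paper argues uniformly for entries (4), (5), (8)--(12) inside the defining representation $V$ of $\gg_u$: it takes $W\subset V$ to be the fixed space of $[\gh_u,\gh_u]$ and characterizes $K_u$ as the $G_u$--stabilizer of $W^\perp$, so that invariance of $\gk_u$ follows from the canonicity of $W$; for entries (1)--(3) both arguments reduce to $\gk_u=\gh_u+\gz_{\gg_u}(\gh_u)$. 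Your route buys a self--contained Lie--algebra argument at the cost of the entry--by--entry check, and two of your stated separators are wrong as written: in entry (11) the center of $U(n)$ acts on $\gq_u\cong[\Lambda^2\C^n]_\R$ with weight $\pm2$, not trivially (the actual separation is $\Lambda^2\C^n\not\cong\C^n$, stable under the outer conjugation of $\gu(n)$), and in entry (12) the circle acts on $\gq_u=\gs\gp(1)/\gu(1)$ with weight $\pm2$ (the separation is that $\gs\gp(n-1)$ acts trivially on $\gq_u$ but not on $\gm_u$). Both are repairable, and your treatment of the totally geodesic statement (normal homogeneous metric, $\gq_u$ a Lie triple system since $\gk_u$ is a subalgebra) is if anything more complete than the paper's.

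The genuine gap is entry (10). You rightly observe that $\gq_u\cong\gm_u\cong\R^7$ as $\gg_2$--modules, so the projection argument cannot work there; but the fix you sketch --- $\sigma$ permutes the three $\gs\gp\gi\gn(7)$--subalgebras $\gs_1,\gs_2,\gs_3$ containing $\gg_2$ and, ``being involutive, fixes one of them, which is then taken as $\gk_u$'' --- does not prove the statement. First, the lemma assumes only that $\sigma$ is an automorphism of $\gh_u$ extending to $\gg_u$, not an involution; triality restricts to the identity on $\gg_2$ and cyclically permutes the $\gs_i$, so involutivity is doing essential work that is not in the hypotheses. Second, even for an involution the fixed subalgebra need not be the designated $\gk_u$: a transposition in the outer $S_3$ normalizing $\gg_2$ can interchange $\gs_1$ with $\gs_2$ while fixing $\gs_3$. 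Choosing $\gk_u$ \emph{after} seeing $\sigma$ proves only that \emph{some} intermediate $Spin(7)$ is $\sigma$--stable --- which is what the later real--form computation actually needs, but is weaker than $\sigma(\gk_u)=\gk_u$ for the given $\gk_u$. To close the gap you must either restrict the class of extensions considered (as the paper in effect does by pinning $K_u$ down as the stabilizer of $W^\perp$ in the vector representation, so that only automorphisms acting on $V$ are in play) or prove the invariance statement for entry (10) with the quantifiers as written; as it stands your second paragraph asserts the hard case rather than settling it.
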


\begin{proof} For entries (1), (2) and (3) of (\ref{kraemer-classn}),
$\gk_u = \gh_u + \gz_{\gg_u}(\gh_u)$, so it is preserved by $\sigma$.
For the other entries (4), (5), (8), (9), (10), (11) and (12), with $\gg_u$
acting as usual on a
real vector space $V$, we proceed as follows: $\dim V = 4n+2, 4+2, 10, 9, 8,
2n+1 \text{ or }4n$, respectively, for entries (4), (5), (8), (9), (10),
(11) and (12).
Let $W$ be the subspace of $V$ on which $[\gh_u,\gh_u]$ acts trivially.  The
action of $H_u$ on $W^\perp$ is $(\R^2,\{1\})$, $(\R^2.U(1))$, $(\R^2,SO(2))$,
$(\R,\{1\})$, $(\R,\{1\})$, $(\R,\{1\})$ or $(\R^4,T)$, respectively, where
$T$ is a circle subgroup of $Sp(1)$.   $W^\perp$ is $H_u$--invariant and
$K_u$ is its $G_u$--stabilizer.  Thus $\sigma(\gk_u) = \gk_u$\,.

For the last statement note that $K_u/H_u$ is a circle $S^1$ for entries
(1), (2) and (3); $S^1 \times SU(2n)/Sp(n)$ for entry (4); $SU(2n)/Sp(n)$
for entry (5); the sphere $S^7$ for entries (8), (9) and (10); $SO(2n)/U(n)$
for entry (11); and the sphere $S^2$ for entry (12).
\end{proof}

We'll run through the cases of (\ref{kraemer-classn}).  When there is an
``intermediate'' group $K_u$\,, we make use of Berger's work \cite{B1957}.
In the other two cases the situation is less complicated and we can work
directly.  Afterwards we will collect the classification of real form
families as the first column in Table 3.6  below.

{\bf Case (1):} $M_u = SU(m+n)/[SU(m)\times SU(n)]$, $m> n \geqq 1$.
Then $\widetilde{M}_u =
SU(m+n)/S[U(m)\times U(n)]$ is a Grassmann manifold.
We start with Berger's classification \cite[\S 50]{B1957}
(Table 2 on page 157).  There
we need only consider the cases $\widetilde{M} = G/K$  where
either (1) $G = SL(m+n;\C)$ and $K = S[GL(m;\C)\times GL(n;\C)]$ or
(2) $G$ is a real form of $SL(m+n;\C)$, $K$ is a real form of
$S[GL(m;\C)\times GL(n;\C)]$, and $K \subset G$.   In these cases
$K$ is not semisimple.  The possibilities are
\begin{equation}\label{b1}
\begin{aligned}
{\rm (i)\,\,} &\widetilde{M} = SL(m+n;\C)/S[GL(m;\C)\times GL(n;\C)]
        \text{ and } M = SL(m+n;\C)/[SL(m;\C)\times SL(n;\C)]\\
{\rm (ii)\,\,} &\widetilde{M} = SL(m+n;\R)/S[GL(m;\R)\times GL(n;\R)]
        \text{ and } M = SL(m+n;\R)/[SL(m;\R)\times SL(n;\R)]\\
{\rm (iii)\,\,} &\widetilde{M} = SL(m'+n';\H)/S[GL(m';\H)\times GL(n';\H)]
        \text{ where } m = 2m' \text{ and } n = 2n'; \text{ and } \\
        &\phantom{XXXXXXXXXXXXXXXXXXXXXXX}
        M = SL(m'+n';\H)/[SL(m';\H)\times SL(n';\H)]\\
{\rm (iv)\,\,} &\widetilde{M} = SU(m-k+\ell,n-\ell+k)/S[U(m-k,k)\times U(n-\ell,\ell)]
        \text{ for } k \leqq m \text{ and } \ell \leqq n; \text{ and }\\
        &\phantom{XXXXXXXXXXXXXXX}
        M = SU(m-k+\ell,n-\ell+k)/[SU(m-k,k)\times SU(\ell,n-\ell)]\\
\end{aligned}
\end{equation}
where $GL(k;\H) := SL(k;\H)\times \R^+$.

{\bf Case (2):} $M_u = SO(2n)/SU(n),\, n \text{ odd, } n \geqq 5$.
Then $\widetilde{M_u} = SO(2n)/U(n)$.
In Berger's classification \cite[\S 50]{B1957}
(Table 2 on page 157) we need only consider the cases
$\widetilde{M} = G/K$ where either (1) $G = SO(2n;\C)$ and $K = GL(n;\C)$ or
(2) $G$ is a real form of $SO(2n;\C)$, $K$ is a real form of $GL(n;\C)$,
and $K \subset G$.  As $K$ is not semisimple the possibilities are
\begin{equation}\label{b2}
\begin{aligned}
{\rm (i)\,\,} &\widetilde{M} = SO(2n;\C)/GL(n;\C) \text{ and }
        M = SO(2n;\C)/SL(n;\C) \\
{\rm (ii)\,\,} &\widetilde{M} = SO^*(2n)/U(k,\ell) \text{ where } 
	k+\ell = n \text{ and } M = SO^*(2n)/SU(k,\ell) \\
{\rm (iii)\,\,} &\widetilde{M} = SO(2k,2\ell)/U(k,\ell)\text{ where } 
	k+\ell = n \text{ and }
        M = SO(2k,2\ell)/SU(k,\ell) \\
{\rm (iv)\,\,} &\widetilde{M} = SO(n,n)/GL(n;\R) \text{ and }
        M = SO(n,n)/SL(n;\R)
\end{aligned}
\end{equation}

{\bf Case (3):} $M_u = E_6/Spin(10)$.
Then $\widetilde{M_u} =E_6/[Spin(10)\times Spin(2)]$.
Again, in \cite[\S 50]{B1957} we need only consider the cases
$\widetilde{M} = G/K$ where either (1) $G = E_{6,\C}$ and
$K = Spin(10;\C)\times Spin(2;\C)$ or (2) $G$ is a real form of $E_{6,\C}$\,,
$K$ is a real form of $Spin(10;\C)\times Spin(2;\C)$, and $K \subset G$.
Berger writes
$
E_6^1 \text{ for } E_{6,C_4} = E_{6(6)}\,,
	E_6^2 \text{ for } E_{6,A_5A_1} = E_{6(2)}\,,
	E_6^3 \text{ for } E_{6,D_5T_1} = E_{6(14)}\text{ and }
	E_6^4 \text{ for } E_{6,F_4} = E_{6(-26)}\,.
$
The possibilities are

\begin{equation}\label{b3}
\begin{aligned}
{\rm (i)\,\,} &\widetilde{M} = E_{6,\C}/[Spin(10;\C)\times Spin(2;\C)]
        \text{ and } M = E_{6,\C}/Spin(10;\C) \\
{\rm (ii)\,\,} &\widetilde{M} = E_6/[Spin(10)\times Spin(2)] \text{ and }
        M = E_6/Spin(10) \\
{\rm (iii)\,\,} &\widetilde{M} = E_{6,C_4}/[Spin(5,5)\times Spin(1,1)] \text{ and }
        M = E_{6,C_4}/Spin(5,5) \\
{\rm (iv)\,\,} &\widetilde{M} = E_{6,A_5A_1}/[SO^*(10)\times SO(2)] \text{ and }
        M = E_{6,A_5A_1}/SO^*(10) \\
{\rm (v)\,\,} &\widetilde{M} = E_{6,A_5A_1}/[Spin(4,6)\times Spin(2)] \text{ and }
        M = E_{6,A_5A_1}/Spin(4,6) \\
{\rm (vi)\,\,} &\widetilde{M} = E_{6,D_5T_1}/[Spin(10)\times Spin(2)] \text{ and }
        M = E_{6,D_5T_1}/Spin(10) \\
{\rm (vii)\,\,} &\widetilde{M} = E_{6,D_5T_1}/[Spin(2,8)\times Spin(2)] \text{ and }
        M = E_{6,D_5T_1}/Spin(2,8) \\
{\rm (viii)\,\,} &\widetilde{M} = E_{6,D_5T_1}/[SO^*(10)\times SO(2)]
        \text{ and } M = E_{6,D_5T_1}/SO^*(10) \\
{\rm (ix)\,\,} &\widetilde{M} = E_{6,F_4}/[Spin(1,9)\times Spin(1,1)] \text{ and }
        M = E_{6,F_4}/Spin(1,9) \\
\end{aligned}
\end{equation}

{\bf Case (4):} $M_u = SU(2n+1)/Sp(n)$.  Then
$\widetilde{M}_u$ is the complex projective space
$SU(2n+1)/S[U(2n)\times U(1)]$, and $K_u/H_u = U(2n)/Sp(n)$.
In \cite[\S 50]{B1957} we need only consider the cases
$\widetilde{M} = G/K$ where either (1) $G = SL(2n+1;\C)$ and
$K = GL(2n;\C)$, or (2) $G$ is a real form of $SL(2n+1;\C)$\,,
$K$ is a real form of $GL(2n;\C)$, and $K \subset G$; and the cases
(3) $K = GL(2n;\C)$ and $H = Sp(n;\C)$, or (4)
$K$ is a real form of $GL(2n;\C)$, $H$ is a real form of $Sp(n;\C)$,
and $H \subset K$.  The possibilities for $\widetilde{M}$ are
$SL(2n+1;\C)/S[GL(2n;\C)\times GL(1;\C)]$,
$SL(2n+1;\R)/S[GL(2n;\R)\times GL(1;\R)]$, and
$SU(2n+1-k,k)/S[U(2n-k,k)\times U(1)]$.
The possibilities for $K/H$ are
$GL(2n;\C)/Sp(n;\C) = [SL(2n;\C)/Sp(n;\C)]\times \C^*,\,\,\,
[SU^*(2n)/Sp(k,\ell)]\times U(1)\,\, (k+\ell = n)$,
$GL(2n;\R)/Sp(n;\R),\,\,\, U(n,n)/Sp(n;\R),\,\,\, \text{ and }\,\,\,
U(2k,2\ell)/Sp(k,\ell)\,\, (k+\ell = n)$.
Fitting these together, the real form family of $M_u = SU(2n+1)/Sp(n)$
consists of
\begin{equation}\label{b4}
\begin{aligned}
{\rm (i)\,\,} &M = SL(2n+1;\C)/Sp(n;\C) \\
{\rm (ii)\,\,} &M = SL(2n+1;\R)/Sp(n;\R) \\
{\rm (iii)\,\,} &M = SU(n+1,n)/Sp(n;\R) \\
{\rm (iv)\,\,} &M = SU(2n+1-2\ell,2\ell)/Sp(n-\ell,\ell)
\end{aligned}
\end{equation}

{\bf Case (5):} $M_u = SU(2n+1)/[Sp(n) \times U(1)]$.  Then $\widetilde{M}_u
=SU(2n+1)/S[U(2n)\times U(1)]$, complex projective space, and
$K_u/H_u = SU(2n)/Sp(n)$.  As before the cases of $\widetilde{M}$ are
$$
\begin{aligned}
&SL(2n+1;\C)/S[GL(2n;\C)\times GL(1;\C)], \quad SL(2n+1;\R)/S[GL(2n;\R)\times GL(1;\R)]  \\
&SU(2n+1-k,k)/S[U(2n-k,k)\times U(1)]
\end{aligned}
$$
The possibilities for $K/H$ are
$$
GL(2n;\C)/[Sp(n;\C)\times \C^*],\quad
GL(2n+1;\R)/[Sp(n;\R)\times \R^*],\quad
U(2n+1-2\ell,2\ell)/[Sp(n-\ell,\ell)\times U(1)]
$$
Fitting these together, the real form family of
$M_u = SU(2n+1)/[Sp(n) \times U(1)]$ consists of
\begin{equation}\label{b5}
\begin{aligned}
{\rm (i)\,\,} &M = SL(2n+1;\C)/[Sp(n;\C) \times \C^*] \\
{\rm (ii)\,\,} &M = SL(2n+1;\R)/[Sp(n;\R) \times \R^+] \\
{\rm (iii)\,\,} &M = SU(n+1,n)/[Sp(n;\R)\times \R^+] \\
{\rm (iv)\,\,} &M = SU(2n+1-2\ell,2\ell)/[Sp(n-\ell,\ell)] \times U(1)]
\end{aligned}
\end{equation}

{\bf Case (6):} $M_u = Spin(7)/G_2$.
Neither $G_2$ nor $Spin(7)$ has an outer
automorphism.  Further, $G_2$ is a non--symmetric maximal subgroup of
$Spin(7)$, so any involutive automorphism of $Spin(7)$
that is the identity on $G_2$ is itself the identity.  Thus the
involutive automorphisms of $Spin(7)$ that preserve $G_2$ have form
$\Ad(s)$ with $s \in G_2$\,.  Now the real
form family of $M_u = Spin(7)/G_2$ consists of
\begin{equation}\label{b6}
\begin{aligned}
{\rm (i)\,\,} &M = Spin(7;\C)/G_{2,\C} \\
{\rm (ii)\,\,} &M = Spin(7)/G_2 \\
{\rm (iii)\,\,} &M = Spin(3,4)/G_{2,A_1A_1}
\end{aligned}
\end{equation}

{\bf  Case (7):} $M_u = G_2/SU(3)$.
$SU(3)$ is a non--symmetric maximal subgroup of
$G_2$\,, so any involutive automorphism of $G_2$ that is the identity on
$SU(3)$ is itself the identity.  Thus the involutive automorphisms of $G_2$
that preserve $SU(3)$ either have form $\Ad(s)$ with $s \in SU(3)$ or
act by $z \mapsto z^{-1}$ on the center $Z_{SU(3)} (\cong \Z_3)$.
Further, $G_{2,A_1A_1}$ is the only noncompact real form of $G_{2,\C}$\,.
Now the real form family of $M_u = G_2/SU(3)$ consists of
\begin{equation}\label{b7}
\begin{aligned}
{\rm (i)\,\,} &M = G_{2,\C}/SL(3;\C) \\
{\rm (ii)\,\,} &M = G_2/SU(3)\\
{\rm (iii)\,\,} &M = G_{2,A_1A_1}/SU(1,2) \\
{\rm (iv)\,\,} &M = G_{2,A_1A_1}/SL(3;\R)
\end{aligned}
\end{equation}

{\bf Case (8):} $M_u = SO(10)/[Spin(7)\times SO(2)]$.
Here $\widetilde{M}_u$ is
the Grassmann manifold $SO(10)/[SO(8)\times SO(2)]$, and $K_u/H_u =
[SO(8)\times SO(2)]/[Spin(7)\times SO(2)]$.  The possibilities
for $\widetilde{M}$, as described by Berger \cite[\S 50]{B1957}
(Table 2 on page 157) are
$$
\begin{aligned}
&SO(10;\C)/[SO(8;\C)\times SO(2;\C)] \\
&SO(9-a,a+1)/[SO(8-a,a)\times SO(1,1)],\quad
	SO(8-a,a+2)/[SO(8-a,a)\times SO(0,2)] \\
&SO(10-a,a)/[SO(8-a,a)\times SO(2,0)], \quad \text{ and }
	SO^*(10)/[SO^*(8)\times SO(2)].
\end{aligned}
$$

To see the possibilities for $K/H$ we must first look carefully at
$SO(8)/Spin(7)$.  Label the Dynkin diagram and simple roots of $Spin(8)$
by
\setlength{\unitlength}{.3 mm}
\begin{picture}(50,12)
\put(4,1){\circle{2}}
\put(3,5){$\psi_1$}
\put(6,1){\line(1,0){13}}
\put(20,1){\circle{2}}
\put(20,5){$\varphi$}
\put(21,0.7){\line(2,-1){13}}
\put(35,-6){\circle{2}}
\put(37,-6){$\psi_3$}
\put(21,0.7){\line(2,1){13}}
\put(35,8){\circle{2}}
\put(37,8){$\psi_2$}
\end{picture}.
Let $\gt$ be the Cartan subalgebra of $\gs\gp\gi\gn(8)$ implicit in that
diagram, and define three $3$--dimensional subalgebras
$$
\gt_1: \psi_2 = \psi_3\,,\,\, \gt_2: \psi_3 = \psi_1\,,\,\,
	\gt_3: \psi_1 = \psi_2\,.
$$
They are the respective Cartan subalgebras of three $\gs\gp\gi\gn(7)$
subalgebras
$$
\gs_1:= \gs\gp\gi\gn(7)_1\,, \gs_2:= \gs\gp\gi\gn(7)_2 \text{ and }
	\gs_3:= \gs\gp\gi\gn(7)_3\,.
$$
$Spin(8)$ has center $Z_{Spin(8)} = \{1,a_1,a_2,a_3\} \cong \Z_2 \times \Z_2$\,,
numbered so that the analytic subgroups $S_i$ for the $\gs_i$ have centers
$Z_{S_i} = \{1,a_i\} \cong \Z_2$\,.  In terms of the Clifford algebra
construction of the spin groups and an orthonormal basis $\{e_j\}$ of
$\R^8$ we may take $a_1 = -1$, $a_2 = e_1e_2\dots e_8$ and
$a_3 = a_1a_2 = -e_1e_2\dots e_8$\,.  Thus $Z_{S_1}$ is the kernel of the
universal covering group projection $\pi: Spin(8) \to SO(8)$.  Note that
$$
\pi(S_1) = SO(7) \text{ and } \pi: S_i \to SO(8) \text{ is an isomorphism
	onto a } Spin(7)\text{--subgroup } \pi S_i \text{ for } i = 2,3.
$$

The outer automorphism group of $Spin(8)$ is given by the permutations of
$\{\psi_1,\psi_2,\psi_3\}$.  It is generated by the triality automorphism
$\tau: \psi_1 \to \psi_2 \to \psi_3 \to \psi_1$\,, equivalently
$\tau: S_1 \to S_2 \to S_3 \to S_1$\,,  equivalently
$\tau: a_1 \to a_2 \to a_3 \to a_1$\,.  It follows that the outer automorphism
group of $SO(8)$ is given by
\setlength{\unitlength}{.3 mm}
\begin{picture}(50,12)
\put(4,1){\circle{2}}
\put(6,1){\line(1,0){13}}
\put(20,1){\circle{2}}
\put(21,0.7){\line(2,-1){13}}
\put(35,-6){\circle{2}}
\put(21,0.7){\line(2,1){13}}
\put(35,8){\circle{2}}
\put(36,-1){$\updownarrow$}
\end{picture}, and the $SO(8)$--conjugacy classes of $Spin(7)$--subgroups of
$SO(8)$ are represented by $\pi S_2$ and $\pi S_3$\,.  It follows that
no $Spin(7)$--subgroup of $SO(8)$ can be invariant under an outer automorphism
of $SO(8)$.  See \cite{V2001} for a detailed exposition.

Let $\sigma$ be an involutive automorphism of $SO(8)$ that preserves the
$Spin(7)$--subgroup $\pi S_2$\,.  As noted just above, $\sigma$ is inner
on $SO(8)$.  $\sigma$ is nontrivial on $\pi S_2$ because $\pi S_2$ is a
non--symmetric maximal connected subgroup.  As $\pi S_2$ is simply
connected it follows that
the fixed point set of $\sigma|_{\pi S_2}$ is connected.  Express
$\sigma = \Ad(s)$.  Then $s^2 = \pm I$, and $s \in \pi S_2$
because $\pi S_2$ is its own normalizer in $SO(8)$.

We may assume $s \in T$ where $T$ is the maximal torus of $SO(8)$ with Lie
algebra $\gt$.  Let $t \in T$ with $\det t = -1$.  Then $\Ad(t)$ is an outer
automorphism of $SO(8)$ so $\pi S'_3 := \Ad(t)(\pi S_2)$ is conjugate of
$\pi S_3$\,.  Compute $\sigma(\pi S'_3)$ = $\Ad(st)(\pi S_2)$ =
$\Ad(ts)(\pi S_2)$ = $\Ad(t)(\pi S_2)$ = $\pi S'_3$\,, so $s \in \pi S'_3$
as above.  According to \cite[Theorem 4]{V2001}
$(\pi S_2 \cap \pi S'_3) = \{\pm I\}G_2$\,, so now $s \in \{\pm I\}G_2$\,.
As $-I \notin G_2$ we conclude $s^2 = I$.

We can replace $s$ by $-s$ if necessary and assume that $s \in G_2$\,.
The group $G_2$ has only one conjugacy class of nontrivial automorphisms.
If $\sigma_{G_2}$ is the identity then $\sigma_{\pi S_2}$ is the identity,
because $G_2$ is a non--symmetric maximal connected subgroup of
$\pi S_2$\,.  But then $\sigma$ is the identity because $\pi S_2$ is a
non--symmetric maximal connected subgroup of $SO(8)$.

Now suppose that $\sigma|_{G_2}$ is not the identity.  Then $\sigma$
leads to real forms $G_{2,A_1A_1}$ of $G_{2,\C}$ and $Spin(3,4)$ of
$Spin(7;\C)$.  Thus we may assume that $s = \bigl ( \begin{smallmatrix}
+I_4 & 0 \\ 0 & -I_4 \end{smallmatrix}\bigr ) \in T$.
In Clifford algebra terms, a unit vector $e$ acts on $\R^8$ by
reflection in the hyperplane $e^\perp$. Thus the  $\pi^{-1}$--image of $s$
is $\{\pm e_5e_6e_7e_8\}$, and $\sigma$ leads to the real form $SO(4,4)$ of
$SO(8;\C)$.

Now we look at the possibilities for $K/H$.  Recall $K_u/H_u =
[SO(8)\times SO(2)]/[Spin(7)\times SO(2)]$.  So $K/H$ must be one of
$$
\begin{aligned}
&[SO(8;\C)\times SO(2;\C)]/[Spin(7;\C)\times SO(2;\C)], \\
&[SO(8)\times SO(2)]/[Spin(7)\times SO(2)], \quad
	[SO(8)\times SO(1,1)]/[Spin(7)\times SO(1,1)], \\
&[SO(4,4)\times SO(2)]/[Spin(3,4)\times SO(2)], \quad
	[SO(4,4)\times SO(1,1)]/[Spin(3,4)\times SO(1,1)].
\end{aligned}
$$
We conclude that the real form family of $SO(10)/[Spin(7)\times SO(2)]$
consists of
\begin{equation}\label{b8}
\begin{aligned}
{\rm (i)\,\,} & M = SO(10;\C)/[Spin(7;\C)\times SO(2;\C)] \\
{\rm (ii)\,\,} & M = SO(10)/[Spin(7)\times SO(2)] \\
{\rm (iii)\,\,} & M = SO(9,1)/[Spin(7,0) \times SO(1,1)] \\
{\rm (iv)\,\,} & M = SO(8,2)/[Spin(7,0)\times SO(0,2)] \\
{\rm (v)\,\,} & M = SO(6,4)/[Spin(4,3) \times SO(2,0)] \\
{\rm (vi)\,\,} & M = SO(5,5)/[Spin(3,4) \times SO(1,1)]
\end{aligned}
\end{equation}

{\bf Case (9):} $M_u = SO(9)/Spin(7)$.
Then $\widetilde{M}_u$ is the sphere
$SO(9)/SO(8)$ and $K_u/H_u = SO(8)/Spin(7)$.  From the considerations
of the case $M_u = SO(10)/[Spin(7)\times SO(2)]$ we see that here,
$\widetilde{M}$ must be one of
$$
SO(9;\C)/SO(8;\C),\quad SO(8-a,a+1)/SO(8-a,a),\quad \text{ or } SO(9-a,a)/SO(8-a,a)
$$
while $K/H$ must be one of
$$
SO(8;\C)/Spin(7;\C), \quad SO(8)/Spin(7),\quad \text{ or }\quad  SO(4,4)/Spin(3,4).
$$
Thus the real form family of $M_u = SO(9)/Spin(7)$ consists of
\begin{equation}\label{b9}
\begin{aligned}
{\rm (i)\,\,} & M = SO(9;\C)/Spin(7;\C) \\
{\rm (ii)\,\,} & M = SO(9)/Spin(7) \\
{\rm (iii)\,\,} & M = SO(8,1)/Spin(7) \\
{\rm (iv)\,\,} & M = SO(5,4)/Spin(3,4) \\
\end{aligned}
\end{equation}

{\bf Case (10):} $M_u = Spin(8)/G_2$\,.
Topologically, $M_u = S^7 \times S^7$, and
$\widetilde{M}_u = Spin(8)/Spin(7) = SO(8)/SO(7) = S^7$ and $K_u/H_u =
SO(7)/G_2 = S^7$.  The possibilities for $\widetilde{M}$ are
$Spin(8;\C)/Spin(7;\C)$, $Spin(8-a,a)/Spin(7-a,a)$ for $0 \leqq a \leqq 7$
and $Spin(8-a,a)/Spin(8-a,a-1)$ for $1 \leqq a \leqq 8$, and
for $K/H$ are $Spin(7;\C)/G_{2,\C}$\,, $SO(7)/G_2$
and $Spin(3,4)/G_{2,A_1A_1}$\,.  Now the real form family of $M_u$ consists of
\begin{equation}\label{b10}
\begin{aligned}
{\rm (i) \,\,} &M = Spin(8;\C)/G_{2,\C} \\
{\rm (ii) \,\,} &M = Spin(8)/G_2 \\
{\rm (iii) \,\,}&M = Spin(7,1)/G_2 \\
{\rm (iv) \,\,} &M = Spin(4,4)/G_{2,A_1A_1} \\
{\rm (v) \,\,} &M = Spin(3,5)/G_{2,A_1A_1}
\end{aligned}
\end{equation}

{\bf Case (11):} $M_u = SO(2n+1)/U(n)$.
Then $\widetilde{M}_u = SO(2n+1)/SO(2n)$ and
$K_u/H_u = SO(2n)/U(n)$.  The possibilities for $\widetilde{M}$ are
$$
\begin{aligned}
&SO(2n+1;\C)/SO(2n;\C), \quad SO(n,n+1)/SO^*(2n)\\
&SO(2n+1;\C)/SO(2n-k,k) \text{ for } 0 \leqq k \leqq 2n,\quad
	SO(2n+1-k,k)/SO(2n-k,k) \text{ for } 0 \leqq k \leqq 2n \\
&SO(2n-k,k+1)/SO(2n-k,k) \text{ for } 0 \leqq k \leqq 2n \\
\end{aligned}
$$
The possibilities for $K/H$ are
$$
\begin{aligned}
&SO(2n;\C)/GL(n;\C),\quad
	SO(2n-2k,2k)/U(n-k,k) \text{ for } 0 \leqq k \leqq n \\
&SO^*(2n)/U(n),\quad
	SO^*(2n)/GL(n/2;\H) \text{ for $n$ even},\quad
	SO(n,n)/GL(n;\R)
\end{aligned}
$$
Putting these together, the real form family of $M_u$ consists of
\begin{equation}\label{b11}
\begin{aligned}
{\rm (i) \,\,} &M = SO(2n+1;\C)/GL(n;\C) \\
{\rm (ii) \,\,} &M = SO(2n+1-2k,2k)/U(n-k,k) \text{ for } 0 \leqq k \leqq n \\
{\rm (iii) \,\,} &M = SO(2n-2k,2k+1)/U(n-k,k) \text{ for } 0 \leqq k \leqq n \\
{\rm (iv) \,\,} &M = SO(n,n+1)/GL(n;\R)
\end{aligned}
\end{equation}

{\bf Case (12):} $M_u = Sp(n)/[Sp(n-1)\times U(1)]$.
Here $\widetilde{M}_u$ is the quaternionic projective space
$Sp(n)/[Sp(n-1)\times Sp(1)]$ and $K_u/H_u$ is
$[Sp(n-1)\times Sp(1)]/[Sp(n-1)\times U(1)] = S^2$.
The possibilities for $\widetilde{M}$ are
$$
\begin{aligned}
&Sp(n;\C)/[Sp(n-1;\C)\times Sp(1;\C)],\quad Sp(n;\R)/[Sp(n-1;\R)\times Sp(1;\R)]\\
&Sp(n-k,k)/[Sp(n-1-k,k)\times Sp(1,0)] \text{ for } 0 \leqq k \leqq n-1 \\
&Sp(n-k,k)/[Sp(n-k,k-1)\times Sp(0,1)] \text{ for } 1 \leqq k \leqq n \\
\end{aligned}
$$
The possibilities for $K/H$ are
$$
\begin{aligned}
&[Sp(n-1;\C)\times Sp(1;\C)]/[Sp(n-1;\C)\times GL(1;\C)] \\
&[Sp(n-1-k,k)\times Sp(1,0)]/[Sp(n-1-k,k)\times U(1,0)]
	\text{ for } 0 \leqq k \leqq n-1 \\
&[Sp(n-k,k-1)\times Sp(0,1)]/[Sp(n-k,k-1)\times U(0,1)]
	\text{ for } 1 \leqq k \leqq n \\
&[Sp(n-1;\R)\times Sp(1;\R)]/[Sp(n-1;\R)\times GL(1;\R)] \\
&[Sp(n-1;\R)\times Sp(1;\R)]/[Sp(n-1;\R)\times U(1)]
\end{aligned}
$$
Now the real form family of $M_u$ consists of
\begin{equation}\label{b12}
\begin{aligned}
{\rm (i) \,\,} &M = Sp(n;\C)/[Sp(n-1;\C)\times GL(1;\C)] \\
{\rm (ii) \,\,} &M = Sp(n-k,k)/[Sp(n-1-k,k)\times U(1,0)]
	\text{ for } 0 \leqq k \leqq n-1 \\
{\rm (iii) \,\,} &M = Sp(n-k,k)/[Sp(n-k,k-1)\times U(0,1)]
	\text{ for } 1 \leqq k \leqq n \\
{\rm (iv) \,\,} &M = Sp(n;\R)/[Sp(n-1;\R)\times GL(1;\R)] \\
{\rm (v) \,\,} &M = Sp(n;\R)/[Sp(n-1;\R)\times U(1)]
\end{aligned}
\end{equation}

As mentioned earlier, all the real form family classification results
of Section \ref{sec2} are tabulated as the first column in Table 3.6 below.

\section{Isotropy Representations and Signature}\label{sec3}
\setcounter{equation}{0}
We will describe the isotropy representations for the weakly
symmetric spaces $M = G/H$ of Section \ref{sec2} using the Bourbaki order for
the simple root system $\Psi = \Psi_G = \{\psi_1, \dots , \psi_\ell\}$ of
$G$.  The result will appear in the twelve sub-headers on Table 3.6, the
consequence for the decomposition of the tangent space will appear in
the second column of Table 3.6, and the resulting possible signatures of
$G$--invariant riemannian metric will be in the third column.  The Bourbaki
order of the simple roots is

{\small
\begin{equation}\label{bourbaki}
\begin{tabular}{|l|l|}\hline
\setlength{\unitlength}{.5 mm}
\begin{picture}(140,15)
\put(10,10){\circle{2}}
\put(8,5){$\psi_1$}
\put(11,10){\line(1,0){13}}
\put(25,10){\circle{2}}
\put(23,5){$\psi_2$}
\put(26,10){\line(1,0){13}}
\put(42,10){\circle*{1}}
\put(45,10){\circle*{1}}
\put(48,10){\circle*{1}}
\put(51,10){\line(1,0){13}}
\put(65,10){\circle{2}}
\put(63,5){$\psi_\ell$}
\put(110,7){($A_\ell$\,, $\ell \geqq 1$)}
\end{picture}
&
\setlength{\unitlength}{.5 mm}
\begin{picture}(140,15)
\put(10,10){\circle{2}}
\put(8,5){$\psi_1$}
\put(11,10){\line(1,0){13}}
\put(25,10){\circle{2}}
\put(23,5){$\psi_2$}
\put(26,10){\line(1,0){13}}
\put(42,10){\circle*{1}}
\put(45,10){\circle*{1}}
\put(48,10){\circle*{1}}
\put(51,10){\line(1,0){13}}
\put(65,10){\circle{2}}
\put(63,5){$\psi_{\ell - 1}$}
\put(66,10.5){\line(1,0){13}}
\put(66,9.5){\line(1,0){13}}
\put(80,10){\circle*{2}}
\put(79,5){$\psi_\ell$}
\put(110,7){($B_\ell$\,, $\ell \geqq 2$)}
\end{picture}
\\
\hline
\setlength{\unitlength}{.5 mm}
\begin{picture}(140,15)
\put(10,10){\circle*{2}}
\put(8,5){$\psi_1$}
\put(11,10){\line(1,0){13}}
\put(25,10){\circle*{2}}
\put(23,5){$\psi_2$}
\put(26,10){\line(1,0){13}}
\put(42,10){\circle*{1}}
\put(45,10){\circle*{1}}
\put(48,10){\circle*{1}}
\put(51,10){\line(1,0){13}}
\put(65,10){\circle*{2}}
\put(63,5){$\psi_{\ell - 1}$}
\put(66,10.5){\line(1,0){13}}
\put(66,9.5){\line(1,0){13}}
\put(80,10){\circle{2}}
\put(79,5){$\psi_\ell$}
\put(110,10){($C_\ell$\,, $\ell \geqq 3$)}
\end{picture}
&
\setlength{\unitlength}{.5 mm}
\begin{picture}(140,20)
\put(10,10){\circle{2}}
\put(8,5){$\psi_1$}
\put(11,10){\line(1,0){13}}
\put(25,10){\circle{2}}
\put(23,5){$\psi_2$}
\put(26,10){\line(1,0){13}}
\put(42,10){\circle*{1}}
\put(45,10){\circle*{1}}
\put(48,10){\circle*{1}}
\put(51,10){\line(1,0){13}}
\put(65,10){\circle{2}}
\put(63,5){$\psi_{\ell - 2}$}
\put(66,9.5){\line(2,-1){13}}
\put(80,3){\circle{2}}
\put(83,16){$\psi_{\ell-1}$}
\put(66,10.5){\line(2,1){13}}
\put(80,17){\circle{2}}
\put(83,2){$\psi_\ell$}
\put(110,7){($D_\ell$\,, $\ell \geqq 4$)}
\end{picture}
\\
\hline
\setlength{\unitlength}{.5 mm}
\begin{picture}(140,13)
\put(10,6){\circle*{2}}
\put(8,1){$\psi_1$}
\put(11,5){\line(1,0){13}}
\put(11,6){\line(1,0){13}}
\put(11,7){\line(1,0){13}}
\put(25,6){\circle{2}}
\put(23,1){$\psi_2$}
\put(110,5){($G_2$)}
\end{picture}
&
\setlength{\unitlength}{.5 mm}
\begin{picture}(140,13)
\put(10,6){\circle{2}}
\put(8,1){$\psi_1$}
\put(11,6){\line(1,0){13}}
\put(25,6){\circle{2}}
\put(23,1){$\psi_2$}
\put(26,5.5){\line(1,0){13}}
\put(26,6.5){\line(1,0){13}}
\put(40,6){\circle*{2}}
\put(38,1){$\psi_3$}
\put(41,6){\line(1,0){13}}
\put(55,6){\circle*{2}}
\put(53,1){$\psi_4$}
\put(110,5){($F_4$)}
\end{picture}
\\
\hline
\setlength{\unitlength}{.5 mm}
\begin{picture}(140,23)
\put(10,15){\circle{2}}
\put(8,18){$\psi_1$}
\put(11,15){\line(1,0){13}}
\put(25,15){\circle{2}}
\put(23,18){$\psi_3$}
\put(26,15){\line(1,0){13}}
\put(40,15){\circle{2}}
\put(38,18){$\psi_4$}
\put(41,15){\line(1,0){13}}
\put(55,15){\circle{2}}
\put(53,18){$\psi_5$}
\put(56,15){\line(1,0){13}}
\put(70,15){\circle{2}}
\put(68,18){$\psi_6$}
\put(40,14){\line(0,-1){13}}
\put(40,0){\circle{2}}
\put(43,0){$\psi_2$}
\put(110,7){($E_6$)}
\end{picture}
&
\setlength{\unitlength}{.5 mm}
\begin{picture}(140,23)
\put(10,15){\circle{2}}
\put(8,18){$\psi_1$}
\put(11,15){\line(1,0){13}}
\put(25,15){\circle{2}}
\put(23,18){$\psi_3$}
\put(26,15){\line(1,0){13}}
\put(40,15){\circle{2}}
\put(38,18){$\psi_4$}
\put(41,15){\line(1,0){13}}
\put(55,15){\circle{2}}
\put(53,18){$\psi_5$}
\put(56,15){\line(1,0){13}}
\put(70,15){\circle{2}}
\put(68,18){$\psi_6$}
\put(71,15){\line(1,0){13}}
\put(85,15){\circle{2}}
\put(83,18){$\psi_7$}
\put(40,14){\line(0,-1){13}}
\put(40,0){\circle{2}}
\put(43,0){$\psi_2$}
\put(110,7){($E_7$)}
\end{picture}
\\
\hline
\setlength{\unitlength}{.5 mm}
\begin{picture}(140,23)
\put(10,15){\circle{2}}
\put(8,18){$\psi_1$}
\put(11,15){\line(1,0){13}}
\put(25,15){\circle{2}}
\put(23,18){$\psi_3$}
\put(26,15){\line(1,0){13}}
\put(40,15){\circle{2}}
\put(38,18){$\psi_4$}
\put(41,15){\line(1,0){13}}
\put(55,15){\circle{2}}
\put(53,18){$\psi_5$}
\put(56,15){\line(1,0){13}}
\put(70,15){\circle{2}}
\put(68,18){$\psi_6$}
\put(71,15){\line(1,0){13}}
\put(85,15){\circle{2}}
\put(83,18){$\psi_7$}
\put(86,15){\line(1,0){13}}
\put(100,15){\circle{2}}
\put(98,18){$\psi_8$}
\put(40,14){\line(0,-1){13}}
\put(40,0){\circle{2}}
\put(43,0){$\psi_2$}
\put(110,7){($E_8$)}
\end{picture}
&
\\
\hline
\end{tabular}
\end{equation}
}

\noindent
where, if there are two root lengths, the black dots indicate the short roots.
We will use the notation
\begin{equation}\label{notation}
\begin{aligned}
&\xi_i: \text{ fundamental highest weight, }\tfrac{2\langle \xi_i,\psi_j\rangle}
	{\langle \psi_j,\psi_j\rangle} = \delta_{i,j} \\
&\pi_\lambda: \text{ irreducible representation of } \gg \text{ of highest
	weight } \lambda \\
&\nu_\lambda: \text{ irreducible representation of } \gk \text{ of highest
	weight } \lambda \\
&\tau_\lambda: \text{ irreducible representation of } \gh \text{ of highest
	weight } \lambda  \\
& \pi_{\lambda,\R}, \nu_{\lambda,\R}, \tau_{\lambda,\R}: \text{ corresponding
	real representations}
\end{aligned}
\end{equation}
Here, if $\pi_\lambda(\gg)$ preserves a real form of the representation
space of $\pi_\lambda$ then $\pi_{\lambda,\R}$ is the representation on
that real form.  Otherwise, $(\pi_\lambda \oplus \pi_{\lambda^*})_\R$
is the representation on the invariant real form of the representation space
of $\pi_\lambda \oplus \pi_{\lambda^*}$\,, where $\pi_{\lambda^*}$ is the
complex conjugate of $\pi_\lambda$\,;  $\nu_{\lambda,\R}$ and
$\tau_{\lambda,\R}$, etc., are defined similarly.  Thus, for example, the
isotropy (tangent space) representations $\nu_{G/K}$ of the compact
irreducible symmetric spaces $G/K$
that correspond to non--tube bounded symmetric domains are

{\small
\begin{equation}\label{isoreps-symm}
\begin{aligned}
&\begin{tabular}{|c|c|c|c|}\hline
$G/K$ & conditions & weights &
	$\nu_{G/K} = (\nu_\lambda\oplus \overline{\nu_\lambda})_\R$ \\
\hline
$\frac{SU(m+n)}{S(U(m)\times U(n))}$ & $m > n \geqq 1$ &
	$\lambda = \xi_{m-1}+\xi_m+\xi_{m+1}$ &
$\left(
\setlength{\unitlength}{0.5mm}
\begin{picture}(54,5)(0,0)
\multiput(1,0)(6,0){4}{\circle{2}}
\multiput(2,0)(10,0){1}{\line(1,0){4}}
\multiput(8,0)(1,0){4}{\line(1,0){0.5}}
\multiput(14,0)(10,0){1}{\line(1,0){4}}
\multiput(35,0)(6,0){4}{\circle{2}}
\multiput(36,0)(10,0){1}{\line(1,0){4}}
\multiput(42,0)(1,0){4}{\line(1,0){0.5}}
\multiput(48,0)(10,0){1}{\line(1,0){4}}
\put(23,0){\makebox(0,0){$\scriptstyle \otimes$}}
\put(27,0){\makebox(0,0){$\times$}}
\put(31,0){\makebox(0,0){$\scriptstyle \otimes$}}
\put(19,3){\makebox(0,0){$\scriptstyle 1$}}
\put(27,3){\makebox(0,0){$\scriptstyle 1$}}
\put(35,3){\makebox(0,0){$\scriptstyle 1$}}
\end{picture}
\right)
\oplus
\left(
\setlength{\unitlength}{0.5mm}
\begin{picture}(54,5)(0,0)
\multiput(1,0)(6,0){4}{\circle{2}}
\multiput(2,0)(10,0){1}{\line(1,0){4}}
\multiput(8,0)(1,0){4}{\line(1,0){0.5}}
\multiput(14,0)(10,0){1}{\line(1,0){4}}
\multiput(35,0)(6,0){4}{\circle{2}}
\multiput(36,0)(10,0){1}{\line(1,0){4}}
\multiput(42,0)(1,0){4}{\line(1,0){0.5}}
\multiput(48,0)(10,0){1}{\line(1,0){4}}
\put(23,0){\makebox(0,0){$\scriptstyle \otimes$}}
\put(27,0){\makebox(0,0){$\times$}}
\put(31,0){\makebox(0,0){$\scriptstyle \otimes$}}
\put(1,3){\makebox(0,0){$\scriptstyle 1$}}
\put(27,3){\makebox(0,0){$\scriptstyle -1$}}
\put(53,3){\makebox(0,0){$\scriptstyle 1$}}
\end{picture}
\right)$ \\
\hline
$\frac{SO(2n)}{U(n)}$ & $n$ odd, $n \geqq 3$ &
	$\lambda = \xi_2 + \xi_n$ &
$ \left(
\setlength{\unitlength}{0.5mm}
\begin{picture}(34,5)(0,0)
\multiput(1,0)(6,0){5}{\circle{2}}
\multiput(2,0)(6,0){2}{\line(1,0){4}}
\multiput(14,0)(1,0){4}{\line(1,0){0.5}}
\multiput(20,0)(10,0){1}{\line(1,0){4}}
\put(29,0){\makebox(0,0){$\scriptstyle \otimes$}}
\put(33,0){\makebox(0,0){$\times$}}
\put(33,3){\makebox(0,0){$\scriptstyle 1$}}
\put(7,3){\makebox(0,0){$\scriptstyle 1$}}
\end{picture}
\right)
\oplus
\left(
\setlength{\unitlength}{0.7mm}
\begin{picture}(34,5)(0,0)
\multiput(1,0)(6,0){5}{\circle{2}}
\multiput(2,0)(10,0){1}{\line(1,0){4}}
\multiput(8,0)(1,0){4}{\line(1,0){0.5}}
\multiput(14,0)(6,0){2}{\line(1,0){4}}
\put(29,0){\makebox(0,0){$\scriptstyle \otimes$}}
\put(33,0){\makebox(0,0){$\times$}}
\put(19,3){\makebox(0,0){$\scriptstyle 1$}}
\put(32,3){\makebox(0,0){$\scriptstyle -1$}}
\end{picture}
\right)
$
\\ \hline
$\frac{E_6}{Spin(10)\times Spin(2)}$ & &
	$\lambda = \xi_5 + \xi_6$ &
$\left(
\setlength{\unitlength}{0.7mm}
\begin{picture}(28,10)(0,3)
\multiput(1,6)(6,0){4}{\circle{2}}
\multiput(2,6)(6,0){3}{\line(1,0){4}}
\put(13,5){\line(0,-1){4}}
\put(13,0){\circle{2}}
\put(23,6){\makebox(0,0){$\scriptstyle \otimes$}}
\put(27,6){\makebox(0,0){$\times$}}
\put(27,9){\makebox(0,0){$\scriptstyle 1$}}
\put(19,9){\makebox(0,0){$\scriptstyle 1$}}
\end{picture}
\right)
\oplus
\left(
\setlength{\unitlength}{0.7mm}
\begin{picture}(28,10)(0,3)
\multiput(1,6)(6,0){4}{\circle{2}}
\multiput(2,6)(6,0){3}{\line(1,0){4}}
\put(13,5){\line(0,-1){4}}
\put(13,0){\circle{2}}
\put(23,6){\makebox(0,0){$\scriptstyle \otimes$}}
\put(27,6){\makebox(0,0){$\times$}}
\put(26,9){\makebox(0,0){$\scriptstyle -1$}}
\put(16,0){\makebox(0,0){$\scriptstyle 1$}}
\end{picture}
\right)
$
\\ \hline
\end{tabular}
\end{aligned}
\end{equation}
}
\hskip -3pt
Here the $\times$ corresponds to the ($1$--dimensional) center of $\gk$,
and with $a$ over the $\times$ we have the unitary character $\zeta^a$
which is the $a^{th}$ power of a basic character $\zeta$ on that center.
We note that

\begin{lemma}\label{h-irred}
Let $G_u/H_u$ be a circle bundle over an irreducible hermitian symmetric
space $G_u/K_u$ dual to a non--tube domain, in other words one of the spaces
{\rm (1)}, {\rm (2)} or {\rm (3)} of {\rm (\ref{kraemer-classn})}.  Let
$\nu_{G/K}$ denote the representation of $K_u$ on the real tangent space
$\gg_u/\gk_u$\,, from {\rm (\ref{isoreps-symm})}.  Then
$\nu_{G/K}|_{H_u}$ is irreducible.
\end{lemma}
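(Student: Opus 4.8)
The plan is to reduce the assertion to a reality--type computation for the restriction $\nu_\lambda|_{H_u}$. Since $G_u/K_u$ is an irreducible hermitian symmetric space, its complexified isotropy representation splits as $(\gg_u/\gk_u)\otimes\C\cong\gp^+\oplus\gp^-$ with $\gp^-=\overline{\gp^+}$, so in the notation of {\rm (\ref{isoreps-symm})} one has $\nu_{G/K}\otimes\C\cong\nu_\lambda\oplus\overline{\nu_\lambda}$ as $K_u$--modules, and therefore also after restriction to $H_u$. I would first record the elementary fact that a real representation $N$ of a compact group with $N\otimes\C\cong W\oplus\overline W$, $W$ complex irreducible, is $\R$--irreducible unless $W$ is of real type: if $N=N_1\oplus N_2$ nontrivially, then matching this decomposition against the isotypic components of $N\otimes\C\cong W\oplus\overline W$ forces $N_1\otimes\C$ or $N_2\otimes\C$ to be $\cong W$, so some $N_i$ is a real form of $W$, which is impossible when $W$ is of complex or quaternionic type. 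Consequently $\nu_{G/K}|_{H_u}\cong(\nu_\lambda|_{H_u})_\R$ is irreducible as soon as $\nu_\lambda|_{H_u}$ is complex irreducible and not of real type, and it is this last statement that I would verify in each of cases {\rm (1)}, {\rm (2)}, {\rm (3)}.

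Complex irreducibility of $\nu_\lambda|_{H_u}$ is immediate: in each case $K_u=H_u\cdot Z$ with $H_u=[K_u,K_u]$ and $Z=Z(K_u)^\circ$ a central torus, so $Z$ acts on the irreducible $K_u$--module $\nu_\lambda$ through a character and $\nu_\lambda|_{H_u}$ remains irreducible. Reading off {\rm (\ref{isoreps-symm})}: in case {\rm (1)}, $\nu_\lambda|_{H_u}$ is an outer tensor product of the standard representation of $SU(m)$ with (the dual of) the standard representation of $SU(n)$; in case {\rm (2)}, with $n$ odd and $n\geq 5$, it is $\Lambda^2$ of the standard representation of $SU(n)$; and in case {\rm (3)} it is a half--spin representation of $Spin(10)$.

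It then remains to read off the reality types. In case {\rm (3)}, $Spin(10)$ is of type $D_5$ with $5$ odd, so its two half--spin representations form a complex--conjugate pair and are of complex type. In case {\rm (2)}, $\overline{\Lambda^2\C^n}\cong(\Lambda^2\C^n)^*\cong\Lambda^{n-2}\C^n$ as $SU(n)$--modules, and $\Lambda^2\C^n\cong\Lambda^{n-2}\C^n$ only when $n=4$; since $n$ is odd this does not occur, so $\Lambda^2\C^n$ is of complex type. In case {\rm (1)}, $m>n\geq 1$ forces $m\geq 2$; the standard representation of $SU(m)$ is of quaternionic type when $m=2$ (in which case $n=1$ and $\nu_\lambda|_{H_u}$ is just the representation $\C^2$ of $SU(2)$) and of complex type when $m\geq 3$ (in which case the outer tensor product $\nu_\lambda|_{H_u}$ is again not self--conjugate, hence of complex type); either way $\nu_\lambda|_{H_u}$ is not of real type. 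In all three cases $\nu_\lambda|_{H_u}$ is complex irreducible and not of real type, so the first paragraph yields the irreducibility of $\nu_{G/K}|_{H_u}$.

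The step needing the most care is the reality bookkeeping in the last two paragraphs: one must keep track of the exceptional subcase $m=2$, $n=1$ of {\rm (1)} (the sphere $S^5=SU(3)/SU(2)$), where $\nu_\lambda|_{H_u}$ is of quaternionic rather than complex type, and one must be sure that the ``real type'' alternative in the dichotomy really does not arise -- which is precisely where the hypotheses $m>n$, $n$ odd, and the specific group $Spin(10)$ are used. The rest is routine.
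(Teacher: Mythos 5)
Your proposal is correct and follows essentially the same route as the paper: the key point in both is that $\nu_\lambda|_{H_u}$ stays irreducible and satisfies $\nu_\lambda|_{H_u}\ne\overline{\nu_\lambda|_{H_u}}$ except for $SU(3)/SU(2)$, where one instead invokes the quaternionic (non-real) type of the standard representation of $SU(2)$. You merely package the paper's ad hoc dimension count for $SU(3)/SU(2)$ into an explicit real/complex/quaternionic dichotomy lemma and verify the reality types case by case, which is a slightly more systematic presentation of the same argument.
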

\begin{proof} In view of the conditions from (\ref{isoreps-symm}),
$\nu_{\lambda}|_{H_u} \ne \overline{\nu_{\lambda}|_{H_u}}$ with the one
exception of $SU(3)/SU(2)$.  It follows, with that exception, that
$\nu_{G/K}|_{H_u}$ is irreducible and $\tau_{G/H} =
\nu_{G/K}|_{H_u} \oplus \tau_{0,\R}$.
In the case of $SU(3)/SU(2)$, $\dim \gg/\gk = 4$ while $\tau_\lambda$
cannot have a trivial summand in $\gg/\gk$.  If $\tau_\lambda$ reduces on
$\gg/\gk$ it is the sum of two $2$--dimensional
real representations.  But $SU(2)$ does not have a nontrivial
$2$--dimensional real representation: the $2$--dimensional complex
representation of $SU(2)$ is quaternionic, not real. Thus, in the case of
$SU(3)/SU(2)$, again $\nu_{G/K}|_{H_u}$ is irreducible and
$\tau_{G/H} = \nu_{G/K}|_{H_u} \oplus \tau_{0,\R}$\,.
\end{proof}

Now, in the cases of (\ref{isoreps-symm}) and Lemma \ref{h-irred},
the isotropy representations of the corresponding weakly symmetric
spaces involve suppressing the $\times$ and adding a trivial representation,
as follows.

{\small
\begin{equation}\label{isoreps-wksymm}
\begin{aligned}
&\begin{tabular}{|c|c|c|}\hline
$G/H$ & weights &
	$\tau_{G/H} = (\tau_\lambda\oplus \overline{\tau_\lambda})_\R
		\oplus \tau_{0,\R}$ \\
\hline
$\frac{SU(m+n)}{SU(m)\times SU(n))}$ & $\lambda = \xi_{m-1}+\xi_{m+1}$ &
$\left(
\setlength{\unitlength}{0.5mm}
\begin{picture}(54,5)(0,0)
\multiput(1,0)(6,0){4}{\circle{2}}
\multiput(2,0)(10,0){1}{\line(1,0){4}}
\multiput(8,0)(1,0){4}{\line(1,0){0.5}}
\multiput(14,0)(10,0){1}{\line(1,0){4}}
\multiput(35,0)(6,0){4}{\circle{2}}
\multiput(36,0)(10,0){1}{\line(1,0){4}}
\multiput(42,0)(1,0){4}{\line(1,0){0.5}}
\multiput(48,0)(10,0){1}{\line(1,0){4}}
\put(27,0){\makebox(0,0){$\scriptstyle \otimes$}}
\put(19,3){\makebox(0,0){$\scriptstyle 1$}}
\put(35,3){\makebox(0,0){$\scriptstyle 1$}}
\end{picture}
\right)
\oplus
\left(
\setlength{\unitlength}{0.5mm}
\begin{picture}(54,5)(0,0)
\multiput(1,0)(6,0){4}{\circle{2}}
\multiput(2,0)(10,0){1}{\line(1,0){4}}
\multiput(8,0)(1,0){4}{\line(1,0){0.5}}
\multiput(14,0)(10,0){1}{\line(1,0){4}}
\multiput(35,0)(6,0){4}{\circle{2}}
\multiput(36,0)(10,0){1}{\line(1,0){4}}
\multiput(42,0)(1,0){4}{\line(1,0){0.5}}
\multiput(48,0)(10,0){1}{\line(1,0){4}}
\put(27,0){\makebox(0,0){$\scriptstyle \otimes$}}
\put(1,3){\makebox(0,0){$\scriptstyle 1$}}
\put(53,3){\makebox(0,0){$\scriptstyle 1$}}
\end{picture}
\right)
\oplus
\left(
\setlength{\unitlength}{0.5mm}
\begin{picture}(54,5)(0,0)
\multiput(1,0)(6,0){4}{\circle{2}}
\multiput(2,0)(10,0){1}{\line(1,0){4}}
\multiput(8,0)(1,0){4}{\line(1,0){0.5}}
\multiput(14,0)(10,0){1}{\line(1,0){4}}
\multiput(35,0)(6,0){4}{\circle{2}}
\multiput(36,0)(10,0){1}{\line(1,0){4}}
\multiput(42,0)(1,0){4}{\line(1,0){0.5}}
\multiput(48,0)(10,0){1}{\line(1,0){4}}
\put(27,0){\makebox(0,0){$\scriptstyle \otimes$}}
\end{picture}
\right)$
\\
\hline
$\frac{SO(2n)}{SU(n)}$ & $\lambda = \xi_2$ &
$ \left(
\setlength{\unitlength}{0.5mm}
\begin{picture}(27,5)(0,0)
\multiput(1,0)(6,0){5}{\circle{2}}
\multiput(2,0)(6,0){2}{\line(1,0){4}}
\multiput(14,0)(1,0){4}{\line(1,0){0.5}}
\multiput(20,0)(10,0){1}{\line(1,0){4}}
\put(7,3){\makebox(0,0){$\scriptstyle 1$}}
\end{picture}
\right)
\oplus
\left(
\setlength{\unitlength}{0.7mm}
\begin{picture}(27,5)(0,0)
\multiput(1,0)(6,0){5}{\circle{2}}
\multiput(2,0)(10,0){1}{\line(1,0){4}}
\multiput(8,0)(1,0){4}{\line(1,0){0.5}}
\multiput(14,0)(6,0){2}{\line(1,0){4}}
\put(19,3){\makebox(0,0){$\scriptstyle 1$}}
\end{picture}
\right)
\oplus
\left(
\setlength{\unitlength}{0.7mm}
\begin{picture}(27,5)(0,0)
\multiput(1,0)(6,0){5}{\circle{2}}
\multiput(2,0)(10,0){1}{\line(1,0){4}}
\multiput(8,0)(1,0){4}{\line(1,0){0.5}}
\multiput(14,0)(6,0){2}{\line(1,0){4}}
\end{picture}
\right)
$
\\ \hline
$\frac{E_6}{Spin(10)}$ & $\lambda = \xi_5$ &
$\left (
\setlength{\unitlength}{0.7mm}
\begin{picture}(22,10)(0,3)
\multiput(1,6)(6,0){4}{\circle{2}}
\multiput(2,6)(6,0){3}{\line(1,0){4}}
\put(13,5){\line(0,-1){4}}
\put(13,0){\circle{2}}
\put(19,9){\makebox(0,0){$\scriptstyle 1$}}
\end{picture}
\right )
\oplus
\left (
\setlength{\unitlength}{0.7mm}
\begin{picture}(22,10)(0,3)
\multiput(1,6)(6,0){4}{\circle{2}}
\multiput(2,6)(6,0){3}{\line(1,0){4}}
\put(13,5){\line(0,-1){4}}
\put(13,0){\circle{2}}
\put(16,0){\makebox(0,0){$\scriptstyle 1$}}
\end{picture}
\right )
\oplus
\left (
\setlength{\unitlength}{0.7mm}
\begin{picture}(22,10)(0,3)
\multiput(1,6)(6,0){4}{\circle{2}}
\multiput(2,6)(6,0){3}{\line(1,0){4}}
\put(13,5){\line(0,-1){4}}
\put(13,0){\circle{2}}
\end{picture}
\right )
$
\\ \hline
\end{tabular}
\end{aligned}
\end{equation}
}

Now we run through the cases of Section \ref{sec2}.

{\bf Case (1):} $M_u = SU(m+n)/[SU(m)\times SU(n)]$, $m > n \geqq 1$.
Consider the spaces listed in (\ref{b1}).  The first three have form
$SL(m+n;\F)/[SL(m;\F) \times SL(n;\F)]$.  In block form matrices
$\left ( \begin{smallmatrix} a & b \\ c & d \end{smallmatrix} \right )$,
the real tangent space of $G/H$ is given by $b$, $c$ and the (real, complex,
real) scalar matrices in the places of $a$ and $d$.
This says that the irreducible summands of the real isotropy representation
have dimensions $mn$, $mn$ and $1$ for $\F = \R$; $2mn$, $2mn$, $1$ and $1$
for $\F =\C$; and $4mn$, $4mn$ and $1$ for $\F = \H$.
Each $\Ad(H)$--invariant
space $\left ( \begin{smallmatrix} 0 & b \\ 0 & 0 \end{smallmatrix} \right )$
and $\left ( \begin{smallmatrix} 0 & 0 \\ c & 0 \end{smallmatrix} \right )$
is null for the Killing form of $\gg$, but they are paired, so together they
contribute signature $(mnd,mnd)$ to any invariant pseudo--riemannian metric
on $G/H$.  Thus the possibilities for signature of invariant pseudo--riemannian
metrics here are
$$
\begin{aligned}
& SL(m+n;\R)/[SL(m;\R) \times SL(n;\R)]:\,\, (mn+1,mn)\,, (mn,mn+1) \\
& SL(m+n;\C)/[SL(m;\C) \times SL(n;\C)]:\,\, (2mn+1,2mn+1)\,,(2mn,2mn+2)\,,(2mn+2,2mn) \\
& SL(m+n;\H)/[SL(m;\H) \times SL(n;\H)]: \,\, (4mn+1,4mn)\,, (4mn,4mn+1).
\end{aligned}
$$

Now consider the fourth space,
$G/H = SU(m-k+\ell,n-\ell+k)/[SU(m-k,k)\times SU(\ell,n-\ell)]$.
In the notation of (\ref{notation}) and (\ref{isoreps-wksymm}),
the complex tangent space of $G/K$ is the sum of $\ad(\gh)$--invariant
subspaces $\gs_+$ and $\gs_-$, the holomorphic and antiholomorphic tangent
spaces of $G/K$, where $\gh$ acts irreducibly
on $\gs_+$ by $\tau_{\xi_1} \otimes \tau_{\xi_{m+n-1}}$ and on
$\gs_-$ by $\tau_{\xi_{m-1}} \otimes \tau_{\xi_{m+1}}$\,.  The Killing form
$\kappa_\C$ of $\gg_\C$ is null on $\gs_+$ and on $\gs_-$ but pairs them, and
the Killing form $\kappa$ of $\gg$ is the real part of $\kappa_\C$\,.
Now the irreducible summands of the isotropy representation of $H$ on the
real tangent space of $G/H$ have dimensions $2mn$ and $1$.  Note the
signs of certain inner products: $\C^{x,y}\otimes\C^{z,w} = \C^{xz+yw,xw+yz}$.
Thus summand of
dimension $2mn$ contributes $(2(m-k)(n-\ell)+2k\ell, 2(m-k)\ell + 2(n-\ell)k)$
or $(2(m-k)\ell + 2(n-\ell)k, 2(m-k)(n-\ell)+2k\ell)$ to the signature of
any invariant pseudo--riemannian metric on $G/H$. Now the possibilities for
the signature of invariant pseudo--riemannian metrics here are
$$
\begin{aligned}
&(2(mn-m\ell-nk+2k\ell)+1,2(m\ell+nk-2k\ell))\,,\, (2(mn-m\ell-nk+2k\ell),2(m\ell+nk-2k\ell)+1)\,,\\
&(2(m\ell+nk-2k\ell)+1,2(mn-m\ell-nk+2k\ell))\,,\, (2(m\ell+nk-2k\ell),2(mn-m\ell-nk+2k\ell)+1).
\end{aligned}
$$

{\bf Case (2):} $M_u = SO(2n)/SU(n)$.
We now consider the spaces listed in (\ref{b2}).  In the  first and
fourth cases $H$ has form $SL(n;\F)$.  For (i), $\gg$ consists of all
$\left ( \begin{smallmatrix} a & b \\ c & d \end{smallmatrix} \right )$ with
$a + a^t = 0 = d+ d^t$ and $c = b^t$.  The symmetry of $\gg$ over $\gk$ is
$\Ad(J)$ where $J = \left ( \begin{smallmatrix} 0 & I \\ -I & 0
\end{smallmatrix} \right )$ as above, and the $(-1)$--eigenspace $\gs$
of $\Ad(J)$ on $\gg$ consists of all
$\left ( \begin{smallmatrix} a & b \\ b & -a \end{smallmatrix} \right )$ with
$a$ antisymmetric and $b$ symmetric.  Thus the contribution of $\gs$ to the
Killing form of $\gg$ has signature $(n(n-1),n(n-1))$ from real $a$ and pure
imaginary $a$.  For (iv), the matrices are real, so the contribution of
$\gs$ to the Killing form of $\gg$ has signature
$\bigl ( \frac{n(n-1)}{2},\frac{n(n-1)}{2}\bigr )$.
Thus the possibilities for
the signature of invariant pseudo--riemannian metrics here are
$$
\begin{aligned}
& SO(2n;\C)/SL(n;\C):\,\, (n(n-1)+2,n(n-1))\,,\, (n(n-1)+1,n(n-1)+1)
	\,,\, (n(n-1),n(n-1)+2);\\
& SO(n,n)/SL(n;\R):\,\, \bigl (\tfrac{1}{2}n(n-1)+1\,,\,\tfrac{1}{2}n(n-1)
	\bigr ) \,,\,\bigl ( \tfrac{1}{2}n(n-1),\tfrac{1}{2}n(n-1)+1\bigr ).
\end{aligned}
$$

In cases (ii) and (iii) of (\ref{b2}) we argue as above for the last case of
(\ref{b1}).  Note the signs of the inner products:
$\Lambda^2(\C^{k,\ell}) = \C^{a,b}$ where
$a = \tfrac{1}{2}(k(k-1)+\ell(\ell-1))$ and
$b = k\ell$.
Thus the possibilities for the signature of invariant
pseudo--riemannian metrics here are
$$
\begin{aligned}
& SO^*(2n)/SU(k,\ell):\,\, (k(k-1)+\ell(\ell-1)+1, 2k\ell),
	(k(k-1)+\ell(\ell-1), 2k\ell+1), \\
& \phantom{SO^*(2n)/SU(k,\ell):\,\,i}(2k\ell+1,k(k-1)+\ell(\ell-1)),
	(2k\ell, k(k-1)+\ell(\ell-1)+1); \\
& SO(2k,2\ell)/SU(k,\ell):\,\, (k(k-1)+\ell(\ell-1))+1, 2k\ell),                (k(k-1)+\ell(\ell-1)), 2k\ell+1), \\
& \phantom{SO^*(2m)/SU(m,n):\,\,(m}(2k\ell+1,k(k-1)+\ell(\ell-1)),
        (2k\ell, k(k-1)+\ell(\ell-1)+1).
\end{aligned}
$$

{\bf Case (3):} $M_u = E_6/Spin(10)$.
We now consider the spaces $M = G/H$ listed in (\ref{b3}).  The representation
of $\gh$ on the complexified tangent space of $M$ is the sum of its two
half spin representations, whose spaces $\gs_\pm$  are null under the Killing
form but are paired.  In case (i) this means that the contribution of
$\gs = \gs_+ + \gs_-$ to the (real) Killing form is $(32,32)$.  In the
riemannian cases where $H = Spin(10)$ the contribution is $(32,0)$ or
$(0,32)$.  In the other four cases for which $H$ has form $Spin(a,b)$ the
contribution is $(16,16)$, because those half spin representations of $H$
are real. Finally, in the two cases where $H = SO^*(10)$, each half spin
representation restricts on the maximal compact subgroup $G_u\cap H\cong U(5)$
of $H$ to the sum of three irreducible representations,
$(
\setlength{\unitlength}{0.4mm}
\begin{picture}(27,5)(0,0)
\multiput(1,0)(6,0){4}{\circle{2}}
\put(21,-2){$\times$}
\put(22,2){$\scriptstyle a$}
\multiput(2,0)(6,0){3}{\line(1,0){4}}
\put(7,3){\makebox(0,0){$\scriptstyle 1$}}
\end{picture}
)
\oplus
(
\setlength{\unitlength}{0.4mm}
\begin{picture}(27,5)(0,0)
\multiput(1,0)(6,0){4}{\circle{2}}
\put(21,-2){$\times$}
\put(22,2){$\scriptstyle b$}
\multiput(2,0)(6,0){3}{\line(1,0){4}}
\put(19,3){\makebox(0,0){$\scriptstyle 1$}}
\end{picture}
)
\oplus
(
\setlength{\unitlength}{0.4mm}
\begin{picture}(27,5)(0,0)
\multiput(1,0)(6,0){4}{\circle{2}}
\put(21,-2){$\times$}
\put(22,2){$\scriptstyle c$}
\multiput(2,0)(6,0){3}{\line(1,0){4}}
\end{picture}
)
$
with $a = 3$, $b = 1$ and $c = 5$, or its dual.  The signature of the
Killing form of $\gg$ on the real tangent space of $G/K$ is
$(20,12)$ both for case (iv) and for case (viii).
Thus the possibilities for the signature of invariant
pseudo--riemannian metrics here are
$$
\begin{aligned}
& E_{6,\C}/Spin(10;\C):\,\, (34,32), (33,33), (32,34);\\
& \text{cases $H = Spin(10)$:}\,\, (17,16)\,,\, (16,17); \\
& \text{cases $H = SO^*(10)$:}\,\, (21,12)\,,\,(20,13)\,,\, (13,20)\,,\,(12,21).
\end{aligned}
$$

{\bf Case (4):} $M_u = SU(2n+1)/Sp(n)$.
Consider the four spaces listed in (\ref{b4}).  The isotropy representation of
$\gh$ on the real tangent space of $G/H$ is the sum of the isotropy
representation $\tau_{K/H}$ on the real tangent space of $K/H$ and the
restriction $\nu_{G/K}|_H$
of the isotropy representation of $K$ on the real tangent space of
$G/K$.  Thus the signatures of the Killing form on the minimal nondegenerate
summands in the real isotropy representation are as the second column
of Table 3.6, and the possible signatures of invariant pseudo--riemannian
metric on $G/H$, are given by the third column there.

{\bf Case (5):} $M_u = SU(2n+1)/[Sp(n) \times U(1)]$.
The four spaces listed in (\ref{b5}) are minor variations on those of
(\ref{b4}).  The commutative factor of $H$ is central in $K$, where it
delivers a trivial factor in the representation of $\gh$ on $\gk/\gh$
and the identity character $\chi$, a nontrivial rotation $\rho$, or a
dilation $\delta$ plus $1/\delta$, in the representation of $\gh$ on $\gg/\gk$.
In this notation, the representation of $\gh$ on
the real tangent space of $G/H$, and the signature of the restriction
of the Killing form of $\gg$ there, are listed in Table 3.6.

{\bf Case (6):} $M_u = Spin(7)/G_2$.
In the three cases of (\ref{b6}), the representation $\tau_{\xi_1}$ of
$\gh$ on the real
tangent space of $G/H$, the signature of the Killing form of $\gg$ on
that tangent space, and the possible signatures of invariant
pseudo--riemannian metric, are as listed in Table 3.6.

{\bf Case (7):} $M_u = G_2/SU(3)$.
In the four cases of (\ref{b7}), the representation of $\gh$ on the real
tangent space of $G/H$, the signature of the Killing form of $\gg$ on
that tangent space, and the possible signatures of invariant
pseudo--riemannian metric, are given by the sum of the vector representation
$\tau_{\xi_1}$ and its dual $\tau_{\xi_2}$\,, and listed in Table 3.6.

{\bf Case (8):} $M_u = SO(10)/[Spin(7)\times SO(2)]$.
We run through the cases of (\ref{b8}).  The representation of $K_u$ on
the real tangent space of the Grassmannian $SO(10)/[SO(8)\times SO(2)]$
remains irreducible on $Spin(7)\times SO(2)$, and the representation of
$H_u$ on the tangent space of $K_u/H_u = [SO(8)\times SO(2)]/[Spin(7)
\times SO(2)] = S^7$ is just the vector representation.  In the cases of
(\ref{b8}), there is no further decomposition when the identity component
of the center of $H$ is a compact. But it splits when that component is
noncompact.  Thus the isotropy representation of $\gh$ on the real tangent
space of $G/H$, the signature of the Killing form on the minimal nondegenerate
summands in the real isotropy representation, and the possible signatures
of invariant pseudo--riemannian metrics on $G/H$, are as listed in Table 3.6.

{\bf Case (9):} $M_u = SO(9)/Spin(7)$.
The cases of (\ref{b9}) are essentially the same as those of (\ref{b8}),
but with the central subgroup of $H$ removed and with $\tau_{\xi_3}$ no
longer tensored with a $2$--dimensional representation.  The isotropy
representation of $\gh$ on the real tangent space of $G/H$, the signature
of the Killing form on the minimal nondegenerate summands in the
real isotropy representation, and the possible signatures of invariant
pseudo--riemannian metric on $G/H$, follow immediately as listed in
Table 3.6.

{\bf Case (10):} $M_u = Spin(8)/G_2$.
In the cases of (\ref{b10}), the representation of $H$ on the tangent space
of $G/H$ is the sum of two copies of the $7$--dimensional representation
$\tau_{\xi_1}$ of $G_2$\,.  The isotropy representation
of $\gh$ on the real tangent space of $G/H$, the signature of the
Killing form on the minimal nondegenerate summands in the
real isotropy representation, and the possible signatures of invariant
pseudo--riemannian metric on $G/H$, are listed in Table 3.6.

{\bf Case (11):} $M_u = SO(2n+1)/U(n)$.
The representation of $H$ on the real tangent space of $G/K = SO(2n+1)/SO(2n)$
is the restriction $(\tau_{\xi_2} \otimes (\zeta \oplus \overline{\zeta})_\R)$
of the vector representation of $SO(2n)$, and on the real tangent space of
$K/H = SO(2n)/U(n)$ is $((\tau_{\xi_2}\otimes \chi) \oplus
(\tau_{\xi_{n-2}} \otimes \overline{\chi}))_\R$ as indicated in the second
line of (\ref{isoreps-symm}).  Now the isotropy representation
of $\gh$ on the real tangent space of $G/H$, the signature of the
Killing form on the minimal nondegenerate summands in the
real isotropy representation, and the possible signatures of invariant
pseudo--riemannian metric on $G/H$, are given as stated below.

{\bf Case (12):} $M_u = Sp(n)/[Sp(n-1)\times U(1)]$.  The representation of
$H$ on the real tangent space of $G/K = Sp(n)/[Sp(n-1)\times Sp(1)]$ is
the restriction $\tau_{\xi_1,\R}\otimes
((\zeta_{+1}\oplus\zeta_{-1})_\R\oplus(\zeta_{-1}\oplus\zeta_{+1})_\R)$
of the representation
$\tau_{\xi_1,\R}\otimes (\tau_{\xi_n} \oplus \tau_{\xi_n})_\R$ of $K$.
The representation of $H$ on the real tangent space of $K/H$ is trivial on
$Sp(n-1)$ and is $(\zeta_{+1}\oplus\zeta_{-1})_\R$ on $U(1)$.  The results are
listed below in Table 3.6.  There ``metric--irreducible'' means minimal
subspace nondegenerate for the Killing form of $\gg$.

\addtocounter{equation}{1}
{\tiny
\begin{longtable}{|l|l|l|}
\caption*{\large \bf Table \thetable \quad Weakly Symmetric Pseudo--Riemannian $G/H$, $G$ simple, $H$ reductive}\\
\hline
{\normalsize $G/H$} & {\normalsize metric--irreducibles} & {\normalsize metric signatures} \\
\hline
\hline
\endfirsthead
\multicolumn{3}{l}{\textit{table continued from previous page $\dots$}} \\
\hline
{\normalsize $G/H$} & {\normalsize metric--irreducibles} & {\normalsize metric signatures} \\
\hline
\hline
\endhead
\hline \multicolumn{3}{r}{\textit{$\dots$ table continued on next page}} \\
\endfoot
\hline
\endlastfoot
\multicolumn{3}{|c|} {\footnotesize \bf (1) Real Form Family of $SU(m+n)/[SU(m)\times SU(n)], m> n \geqq 1$; $\tau = ((\tau_{\xi_1}\otimes\tau_{\xi_{m+n-1}})
       \oplus (\tau_{\xi_{m-1}}\otimes\tau_{\xi_{m+1}}))_\R
       \oplus \tau_{0,\R}$}\\
\hline
{\footnotesize $\tfrac{SL(m+n;\C)}{SL(m;\C)\times SL(n;\C)}$}
	& \begin{tabular}{l} $(2mn,2mn)$, $(1,0)$, $(0,1)$ \end{tabular}
	& \begin{tabular}{l} $(2mn+2,2mn)$, $(2mn+1,2mn+1)$, $(2mn,2mn+2)$ \end{tabular} \\
\hline
{\footnotesize $\tfrac{SL(m+n;\R)}{SL(m;\R)\times SL(n;\R)}$}
	& \begin{tabular}{l} $(mn,mn)$, $(1,0)$ \end{tabular}
	& \begin{tabular}{l} $(mn+1,mn)$, $(mn,mn+1)$  \end{tabular} \\
\hline
{\footnotesize $\tfrac{SL(m+n;\H)}{SL(m;\H)\times SL(n;\H)}$}
	& \begin{tabular}{l} $(4mn,4mn)$, $(1,0)$ \end{tabular}
	& \begin{tabular}{l} $(4mn+1,4mn)$, $(4mn,4mn+1)$ \end{tabular} \\
\hline
{\footnotesize $\tfrac{SU(m-k+\ell,n-\ell+k)}{SU(m-k,k)\times SU(\ell,n-\ell)}$ }
& \begin{tabular}{l}
	$(2m\ell+2nk-4k\ell, 2mn-2m\ell-2nk+4k\ell)$ \\
	$(0,1)$ \end{tabular}
& \begin{tabular}{l}
	$(2m\ell+2nk-4k\ell+1, 2mn-2m\ell-2nk+4k\ell)$ \\
	$(2m\ell+2nk-4k\ell, 2mn-2m\ell-2nk+4k\ell+1)$ \end{tabular} \\
\hline
\hline
\multicolumn{3}{|c|} {\footnotesize \bf (2) Real Form Family of $SO(2n)/SU(n)$;
	$\tau = (\tau_{\xi_2} \oplus \tau_{\xi_{n-2}})_\R
	\oplus \tau_{0,\R}$}\\
\hline
$SO(2n;\C)/SL(n;\C)$ & \begin{tabular}{l} $(n(n-1),n(n-1))$, $(1,0)$, $(0,1)$ \end{tabular}
	& \begin{tabular}{l} $(n(n-1)+2,n(n-1))$ \\
		$(n(n-1)+1,n(n-1)+1)$ \\
		$(n(n-1),n(n-1)+2)$ \end{tabular}\\
\hline
$SO^*(2n)/SU(k,\ell)$
	& \begin{tabular}{l} $(k(k-1)+\ell(\ell-1), 2k\ell)$, $(0,1)$ \end{tabular}
	& \begin{tabular}{l} $(k(k-1)+\ell(\ell-1)+1, 2k\ell)$ \\
          $(k(k-1)+\ell(\ell-1), 2k\ell+1)$ \\
	  $(2k\ell+1,k(k-1)+\ell(\ell-1))$ \\
	  $(2k\ell, k(k-1)+\ell(\ell-1)+1)$
	  \end{tabular}
	\\
\hline
$SO(2k,2\ell)/SU(k,\ell)$
	& \begin{tabular}{l} ($(k(k-1)+\ell(\ell-1),2k\ell)$, $(0,1)$ \end{tabular}
	&\begin{tabular}{l} $(k(k-1)+\ell(\ell-1)+1, 2k\ell)$\\
         $(k(k-1)+\ell(\ell-1), 2k\ell+1)$\\
         $(2k\ell+1,k(k-1)+\ell(\ell-1))$\\
        $(2k\ell, k(k-1)+\ell(\ell-1)+1)$ \end{tabular} \\
\hline
$SO(n,n)/SL(n;\R)$
	& \begin{tabular}{l} $(\frac{1}{2}n(n-1),\frac{1}{2}n(n-1))$, $(1,0)$ \end{tabular}
	& \begin{tabular}{l}  $(\frac{1}{2}n(n-1)+1,\frac{1}{2}n(n-1))$ \\
		$(\frac{1}{2}n(n-1),\frac{1}{2}n(n-1)+1)$  \end{tabular}  \\
\hline
\hline
\multicolumn{3}{|c|} {\footnotesize \bf (3) Real Form Family of $E_6/Spin(10)$;
	$\tau = (\tau_{\xi_4} \oplus \tau_{\xi_5})_\R \oplus \tau_{0,\R}$}\\
\hline
$E_{6,\C}/Spin(10;\C)$ &  \begin{tabular}{l} $(32,32)$,\, $(1,0)$,\, $(0,1)$ \end{tabular}
	&\begin{tabular}{l} $(34,32)$, $(33,33)$, $(32,34)$ \end{tabular} \\
\hline
$E_6/Spin(10)$ & \begin{tabular}{l} $(0,32)$\,, $(0,1)$ \end{tabular}
	& \begin{tabular}{l} $(33,0)$\,, $(32,1)$\,, $(1,32)$, $(0,33)$ \end{tabular} \\
\hline
$E_{6,C_4}/Spin(5,5)$ & \begin{tabular}{l} $(16,16)$\,, $(1,0)$ \end{tabular}
	& \begin{tabular}{l} $(17,16)$\,, $(16,17)$ \end{tabular}\\
\hline
$E_{6,A_5A_1}/SO^*(10)$ & \begin{tabular}{l} $(20,12)$\,, $(0,1)$ \end{tabular}
	& \begin{tabular}{l} $(21,12)$\,, $(20,13)$\,, $(13,20)$\,, $(12,21)$ \end{tabular} \\
\hline
$E_{6,A_5A_1}/Spin(4,6)$ &  \begin{tabular}{l} $(16,16)$\,, $(0,1)$  \end{tabular}
& \begin{tabular}{l} $(17,16)$\,, $(16,17)$ \end{tabular} \\
\hline
$E_{6,D_5T_1}/Spin(10)$ & \begin{tabular}{l} $(32,0)$\,, $(0,1)$ \end{tabular}
	& \begin{tabular}{l} $(33,0)$\,, $(32,1)$\,, $(1,32)$, $(0,33)$ \end{tabular} \\
\hline
$E_{6,D_5T_1}/Spin(2,8)$ & \begin{tabular}{l} $(16,16)$\,, $(0,1)$ \end{tabular}
& \begin{tabular}{l} $(17,16)$\,, $(16,17)$ \end{tabular} \\
\hline
$E_{6,D_5T_1}/SO^*(10)$ & \begin{tabular}{l} $(12,20)$\,, $(0,1)$ \end{tabular}
	& \begin{tabular}{l} $(21,12)$\,, $(20,13)$\,, $(13,20)$\,, $(12,21)$ \end{tabular} \\
\hline
$E_{6,F_4}/Spin(1,9)$ & \begin{tabular}{l} $(16,16)$\,, $(1,0)$  \end{tabular}
 & \begin{tabular}{l} $(17,16)$\,, $(16,17)$ \end{tabular} \\
\hline
\hline
\multicolumn{3}{|c|} {\footnotesize \bf (4) Real Form Family of $SU(2n+1)/Sp(n)$;
	$\tau = (\tau_1 \oplus \tau_1)_\R
	\oplus (\tau_2 \oplus \tau_2)_\R \oplus \tau_{0,\R}$}\\
\hline
$SL(2n+1;\C)/Sp(n;\C)$ & \begin{tabular}{l} $(1,0)$, $(0,1)$ \\ $(4n,4n)$ \\
                $(2n^2-n-1, 2n^2-n-1)$ \end{tabular}
	& \begin{tabular}{l} $(2n^2+3n+1, 2n^2+3n-1)$\\
	 $(2n^2+3n, 2n^2+3n)$\\ $(2n^2+3n-1, 2n^2+3n+1)$  \end{tabular}\\
\hline
$SL(2n+1;\R)/Sp(n;\R)$ & \begin{tabular}{l} $(1,0)$ \\ $(2n,2n)$ \\
         $(n^2-1,n^2-n)$ \end{tabular} &
        \begin{tabular}{l} $(n^2+n+1,n^2+2n-1)$ \\ $(n^2+2n,n^2+n)$ \\
                $(n^2+n,n^2+2n)$ \\ $(n^2+2n-1,n^2+n+1)$ \end{tabular} \\
\hline
$SU(n+1,n)/Sp(n;\R)$ & \begin{tabular}{l} $(0,1)$ \\ $(2n, 2n)$ \\
        $(n^2-n, n^2-1)$  \end{tabular} &
        \begin{tabular}{l} $(n^2+n+1,n^2+2n-1)$ \\ $(n^2+2n,n^2+n)$ \\
                $(n^2+n,n^2+2n)$ \\ $(n^2+2n-1,n^2+n+1)$
        \end{tabular} \\
\hline
{\footnotesize $\tfrac{SU(2n+1-2\ell,2\ell)}{Sp(n-\ell,\ell)}$} &
        \begin{tabular}{l} $(0,1)$ \\ $(4\ell,4n-4\ell)$ \\
                $(4n\ell-4\ell^2,2n^2-4n\ell+4\ell^2-n-1)$ \end{tabular} &
        {\tiny
        \begin{tabular}{l}
                $(4n\ell-4\ell^2+4\ell+1,2n^2-4n\ell+4\ell^2+3n-4\ell-1)$ \\
                $(4n\ell-4\ell^2+4n-4\ell+1,2n^2-4n\ell+4\ell^2+4\ell-n-1)$ \\
                $(2n^2-4n\ell+4\ell^2+3n-4\ell,4n\ell-4\ell^2+4\ell)$ \\
                $(2n^2-4n\ell+4\ell^2+4\ell-n,4n\ell-4\ell^2+4n-4\ell)$ \\
                $(4n\ell-4\ell^2+4\ell,2n^2-4n\ell+4\ell^2+3n-4\ell)$ \\
                $(4n\ell-4\ell^2+4n-4\ell,2n^2-4n\ell+4\ell^2+4\ell-n)$ \\
                $(2n^2-4n\ell+4\ell^2+3n-4\ell-1,4n\ell-4\ell^2+4\ell+1)$ \\
                $(2n^2-4n\ell+4\ell^2+4\ell-n-1,4n\ell-4\ell^2+4n-4\ell+1)$
                \end{tabular} }\\
\hline
\hline
\multicolumn{3}{|c|} {\footnotesize \bf (5) Real Form Family of $SU(2n+1)/[Sp(n)\times  U(1)]$;
  $\tau = ((\tau_1 \otimes \zeta_1)\oplus (\tau_1 \otimes \zeta_{-1}))_\R
  \oplus ((\tau_2 \otimes \zeta_2) \oplus (\tau_2 \otimes \zeta_{-2}))_\R$}\\
\hline
{\footnotesize $\tfrac{SL(2n+1;\C)}{Sp(n;\C)\times\C^*}$} &
          \begin{tabular}{l} $(4n,4n), (2n^2-n-1, 2n^2-n-1)$ \end{tabular}
        &   \begin{tabular}{l} $(2n^2+3n-1, 2n^2+3n-1)$ \end{tabular} \\
\hline
{\footnotesize $\tfrac{SL(2n+1;\R)}{Sp(n;\R)\times \R^*}$} &
        \begin{tabular}{l} $(n^2-1,n^2-n), (2n,2n)$ \end{tabular} &
        \begin{tabular}{l} $(n^2+2n-1,n^2+n), (n^2+n, n^2+2n-1)$ \end{tabular} \\
\hline
{\footnotesize $\tfrac{SU(n+1,n)}{Sp(n;\R)\times U(1)}$} &
	 \begin{tabular}{l} $(n^2-n, n^2-1), (2n, 2n)$ \end{tabular} &
        \begin{tabular}{l}  $(n^2+n,n^2+2n-1), (n^2+2n-1, n^2+n)$ \end{tabular}\\
\hline
{\footnotesize $\frac{SU(2n+1-2\ell,2\ell)}{Sp(n-\ell,\ell)\times U(1)}$} &
        \begin{tabular}{l} $(4\ell,4n-4\ell)$ \\
                $(4n\ell-4\ell^2,2n^2-4n\ell+4\ell^2-n-1)$ \end{tabular} &
        {\tiny \begin{tabular}{l}
        $(4n\ell-4\ell^2+4\ell, 2n^2-4n\ell+4\ell^2+3n-4\ell-1)$ \\
        $(4n\ell-4\ell^2+4n-4\ell, 2n^2-4n\ell+4\ell^2+4\ell-n-1)$ \\
        $(2n^2-4n\ell+4\ell^2+3n-4\ell-1, 4n\ell-4\ell^2+4\ell)$ \\
        $(2n^2-4n\ell+4\ell^2+4\ell-n-1, 4n\ell-4\ell^2+4n-4\ell)$
        \end{tabular}} \\
\hline
\hline
\multicolumn{3}{|c|} {\footnotesize \bf (6) Real Form Family of $Spin(7)/G_2$;
	$\tau = \tau_{\xi_1,\R}$}\\
\hline
$Spin(7;\C)/G_{2,\C}$ & \begin{tabular}{l} (7,7) \end{tabular} & \begin{tabular}{l} (7,7) \end{tabular} \\
\hline
$Spin(7)/G_2$ & \begin{tabular}{l} (0,7) \end{tabular} & \begin{tabular}{l} (7,0) and (0,7) \end{tabular} \\
\hline
$Spin(3,4)/G_{2,A_1A_1}$ & \begin{tabular}{l} (4,3) \end{tabular} & \begin{tabular}{l} (4,3) and (3,4) \end{tabular}\\
\hline
\hline
\multicolumn{3}{|c|} {\footnotesize \bf (7) Real Form Family of $G_2/SU(3)$;
	$\tau = (\tau_{\xi_1} \oplus \tau_{\xi_2})_\R$}\\
\hline
$G_{2,\C}/SL(3;\C)$ & \begin{tabular}{l} (6,6)\end{tabular} & \begin{tabular}{l} (6,6) \end{tabular} \\
\hline
$G_2/SU(3)$ &\begin{tabular}{l} (0,6) \end{tabular} & \begin{tabular}{l} (6,0) and (0,6) \end{tabular} \\
\hline
$G_{2,A_1A_1}/SU(1,2)$ & \begin{tabular}{l} (4,2) \end{tabular} &\begin{tabular}{l}  (4,2) and (2,4) \end{tabular} \\
\hline
$G_{2,A_1A_1}/SL(3;\R)$ & \begin{tabular}{l}(3,3)\end{tabular} & \begin{tabular}{l} (3,3)\end{tabular}\\
\hline
\hline
\multicolumn{3}{|c|} {\footnotesize \bf (8) Real Form Family of $SO(10)/[Spin(7)\times SO(2)]$;
	$\tau = \tau_{\xi_1,\R}\oplus (\tau_{\xi_3,\R}
	\otimes (\zeta_1 \oplus \zeta_{-1}))_\R$}\\
\hline
{\footnotesize $\tfrac{SO(10;\C)}{Spin(7;\C)\times SO(2;\C)}$} & \begin{tabular}{l} (16,16), (7,7)\end{tabular} & \begin{tabular}{l} (23,23)\end{tabular}\\
\hline
{\footnotesize $\tfrac{SO(10)}{Spin(7)\times SO(2)}$} & \begin{tabular}{l} (0,16), (0,7)\end{tabular} & \begin{tabular}{l} (23,0), (16,7), (7,16), (0,23)\end{tabular}\\
\hline
{\footnotesize $\tfrac{SO(9,1)}{Spin(7,0)\times  SO(1,1)}$} & \begin{tabular}{l} (8,8), (0,7)\end{tabular} & \begin{tabular}{l} (15,8), (8,15)\end{tabular} \\
\hline
{\footnotesize $\tfrac{SO(8,2)}{Spin(7,0)\times SO(0,2)}$} & \begin{tabular}{l} (16,0), (0,7)\end{tabular} &
			 \begin{tabular}{l} (23,0), (16,7), (7,16), (0,23)\end{tabular} \\
\hline
{\footnotesize $\tfrac{SO(6,4)}{Spin(4,3)\times  SO(2,0)}$} & \begin{tabular}{l} (8,8), (4,3) \end{tabular}& \begin{tabular}{l} (12,11), (11,12)\end{tabular} \\
\hline
{\footnotesize $\tfrac{SO(5,5)}{Spin(3,4)\times  SO(1,1)}$} & \begin{tabular}{l} (8,8), (4,3)\end{tabular} & \begin{tabular}{l} (12,11), (11,12)\end{tabular} \\
\hline
\hline
\multicolumn{3}{|c|} {\footnotesize \bf (9) Real Form Family of $SO(9)/Spin(7)$;
	$\tau = \tau_{\xi_3,\R}\oplus \tau_{\xi_1,\R}$} \\
\hline
$SO(9;\C)/Spin(7;\C)$ & \begin{tabular}{l}(7,7) and (8,8) \end{tabular}&\begin{tabular}{l} (15,15)\end{tabular} \\
\hline
$SO(9)/Spin(7)$ & \begin{tabular}{l}(0,7) and (0,8)\end{tabular} &\begin{tabular}{l} (15,0), (8,7), (7,8), (0,15)\end{tabular}\\
\hline
$SO(8,1)/Spin(7)$ &\begin{tabular}{l} (0,7) and (8,0) \end{tabular}& \begin{tabular}{l}(15,0), (8,7), (7,8), (0,15)\end{tabular} \\
\hline
$SO(5,4)/Spin(3,4)$ & \begin{tabular}{l}(4,3) and (4,4)\end{tabular} & \begin{tabular}{l}(8,7), (7,8)\end{tabular}\\
\hline
\hline
\multicolumn{3}{|c|} {\footnotesize \bf (10) Real Form Family of $Spin(8)/G_2$;
	$\tau = \tau_{\xi_1,\R} \oplus \tau_{\xi_1,\R}$} \\
\hline
$Spin(8;\C)/G_{2,\C}$ &\begin{tabular}{l}   (7,7) and (7,7)\end{tabular} & \begin{tabular}{l}  (14,14) \end{tabular}\\
\hline
$Spin(8)/G_2$ & \begin{tabular}{l}  (0,7) and (0,7) \end{tabular}& \begin{tabular}{l} (14,0), (7,7), (0,14)\end{tabular} \\
\hline
$Spin(7,1)/G_2$ & \begin{tabular}{l}  (7,0) and (0,7)\end{tabular} & \begin{tabular}{l} (14,0), (7,7), (0,14)\end{tabular} \\
\hline
$Spin(4,4)/G_{2,A_1A_1}$ & \begin{tabular}{l} (4,3) and (4,3)\end{tabular} & \begin{tabular}{l} (8,6), (7,7), (6,8)\end{tabular} \\
\hline
$Spin(3,5)/G_{2,A_1A_1}$ &\begin{tabular}{l}  (4,3) and (3,4) \end{tabular}&\begin{tabular}{l}  (8,6), (7,7), (6,8) \end{tabular}\\
\hline
\hline
\multicolumn{3}{|c|} {\footnotesize \bf (11) Real Form Family of $SO(2n+1)/U(n)$;
	$\tau = (\tau_{\xi_2,\R} \otimes (\zeta \oplus \zeta_{-1}))_\R$} \\
\hline
 {\footnotesize $\tfrac{SO(2n+1;\C)}{GL(n;\C)}$} &
        \begin{tabular}{l} $(2n,2n)$, $\bigl ( n(n-1), n(n-1) \bigr )$
        \end{tabular} &
       \begin{tabular}{l} $\bigl ( n(n+1),n(n+1) \bigr )$ \end{tabular}\\
\hline
{\footnotesize $\tfrac{SO(2n+1-2k,2k)}{U(n-k,k)}$} & \begin{tabular}{l} $(2k,2n-2k)$ \\
				$(2kn-2k^2,n^2-2kn+2k^2-n)$ \end{tabular}
	& {\tiny \begin{tabular}{l} $(n^2-2nk+2k^2+n-2k, 2nk-2k^2+2k)$ \\
        $(n^2-2nk+2k^2-n+2k, 2nk-2k^2+2n-2k)$ \\
                $(2nk-2k^2+2k, n^2-2nk+2k^2+n-2k)$ \\
                $(2nk-2k^2+2n-2k, n^2-2nk+2k^2-n+2k)$
        \end{tabular}} \\
\hline
{\footnotesize $\tfrac{SO(2n-2k,2k+1)}{U(n-k,k)}$} & \begin{tabular}{l} $(2n-2k,2k)$ \\
				$(2kn-2k^2,n^2-2kn+2k^2-n)$ \end{tabular}
	& {\tiny \begin{tabular}{l}
        $(n^2 - 2nk + 2k^2 + n - 2k, 2nk - 2k^2 + 2k)$ \\
        $(n^2 - 2nk + 2k^2 - n + 2k, 2nk - 2k^2 + 2n - 2k)$ \\
        $(2nk - 2k^2 + 2k, n^2 - 2nk + 2k^2 + n - 2k)$ \\
        $(2nk - 2k^2 + 2n - 2k, n^2 - 2nk + 2k^2 - n + 2k)$
        \end{tabular}}  \\
\hline
$SO(n,n+1)/GL(n;\R)$ & \begin{tabular}{l} $(n,n)$ \\
                $\bigl ( \tfrac{1}{2}n(n-1), \tfrac{1}{2}n(n-1) \bigr )$
        \end{tabular} &
       \begin{tabular}{l} $\bigl ( \tfrac{1}{2}n(n+1),\tfrac{1}{2}n(n+1) \bigr )$ \end{tabular} \\
\hline
\hline
\multicolumn{3}{|c|} {\footnotesize \bf (12) Real Form Family of $Sp(n)/[Sp(n-1)\times U(1)]$;
	$\tau = (\tau_{\xi_1} \otimes (\zeta_{+1} \oplus \zeta_{-1})_\R)
	\oplus (\zeta_{+1} \oplus \zeta_{-1})_\R$} \\
\hline
{\footnotesize $\tfrac{Sp(n;\C)}{Sp(n-1;\C)\times GL(1;\C)}$} &
        \begin{tabular}{l} $(2,2)$ \\ $(4n-4,4n-4)$
        \end{tabular} &
       \begin{tabular}{l} $(4n-2,4n-2)$ \end{tabular} \\
\hline
{\footnotesize $\tfrac{Sp(n-k,k)}{Sp(n-1-k,k)\times U(1)}$} &
        \begin{tabular}{l} $(0,2)$ \\ $(4k,4n-4k-4)$ \end{tabular} &
 \begin{tabular}{l}  $(4n-4k-2, 4k)$ \\
        $(4n-4k-4, 4k+2)$ \\
        $(4k+2, 4n-4k-4)$ \\
        $(4k, 4n-4k-2)$
        \end{tabular} \\
\hline
{\footnotesize $\tfrac{Sp(n-k,k)}{Sp(n-k,k-1)\times U(1)}$} &
        \begin{tabular}{l} $(0,2)$ \\ $(4n-4k,4k-4)$ \end{tabular} &
 \begin{tabular}{l}  $(4n-4k, 4k-2)$ \\
        $(4n-4k+2, 4k-4)$ \\
        $(4k-4, 4n-4k+2)$ \\
        $(4k-2, 4n-4k)$
        \end{tabular} \\
\hline
{\footnotesize $\tfrac{Sp(n;\R)}{Sp(n-1;\R)\times GL(1;\R)}$} & \begin{tabular}{l} $(1,1)$, $(2n-2,2n-2)$ \end{tabular} &
 \begin{tabular}{l}
        $(2n-1,2n-1)$
        \end{tabular} \\
\hline
{\footnotesize $\tfrac{Sp(n;\R)}{Sp(n-1;\R)\times U(1)}$} &
        \begin{tabular}{l} $(2,0)$, $(2n-2,2n-2)$ \end{tabular} &
        \begin{tabular}{l}
        $(2n,2n-2), (2n-2,2n)$
        \end{tabular} \\
\hline
\end{longtable}
}

\section{Real Form Families for $G_u$ Semisimple but not Simple.}\label{sec4}
\setcounter{equation}{0}
Table (\ref{brion-compact}) just below is Yakimova's formulation
(\cite{Y2005}, \cite{Y2006}) of the principal case diagram of Mikityuk
\cite{M1986}, with some indices shifted to facilitate descriptions of the
real form families.
See \cite[Section 12.8]{W2007} for the details.
It gives the irreducible compact spherical
pairs and nonsymmetric compact weakly symmetric spaces.  There,
$\gs\gp(n)$ corresponds to the compact group $Sp(n)$.
In each of the nine entries of Table (\ref{brion-compact}), $\gg$ is the sum
of the algebras on the top row and $\gh$ is the sum of the algebras on the
bottom row.  We continue the numbering from (\ref{kraemer-classn}).

The spaces of (\ref{kraemer-classn}), and entries (1) through (8) in
(\ref{brion-compact}), all are principal.  Entry (9) of (\ref{brion-compact})
is a little more complicated; see \cite[Section 12.8]{W2007}.  There the
$\gg_i$ are semisimple but not necessarily simple.

{\footnotesize
\begin{equation} \label{brion-compact}
\begin{tabular}{|l|l|l|}\hline
\multicolumn{3}{| c |}{Compact Irred Nonsymmetric Weakly
Symmetric $(\gg,\gh)$,\
$\gg$ is Semisimple but not Simple}\\
\hline \hline
\setlength{\unitlength}{.45 mm}
\begin{picture}(65,28)
\put(0,20){(13)}
\put(13,20){$\gs\gu(n)$}
\put(38,20){$\gs\gu(n+1)$}
\put(13,5){$\gs\gu(n)$}
\put(45,5){$\gu(1)$}
\put(21,10){\line(0,1){8}}
\put(50,10){\line(0,1){8}}
\put(26,10){\line(2,1){15}}
\end{picture}
&
\setlength{\unitlength}{.45 mm}
\begin{picture}(85,28)
\put(0,20){(16)}
\put(13,20){$\gs\gu(n+2)$}
\put(58,20){$\gs\gp(m+1)$}
\put(11,5){$\gu(n)$}
\put(25,0){$\gs\gu(2) = \gs\gp(1)$}
\put(63,5){$\gs\gp(m)$}
\put(21,10){\line(0,1){8}}
\put(73,10){\line(0,1){8}}
\put(50,5){\line(1,1){13}}
\put(38,5){\line(-1,1){13}}
\end{picture}
&
\setlength{\unitlength}{.45 mm}
\begin{picture}(105,28)
\put(0,20){(19)}
\put(13,20){$\gs\gp(n+1)$}
\put(45,20){$\gs\gp(\ell+1)$}
\put(75,20){$\gs\gp(m+1)$}
\put(13,5){$\gs\gp(n)$}
\put(35,5){$\gs\gp(1)$}
\put(53,5){$\gs\gp(\ell)$}
\put(75,5){$\gs\gp(m)$}
\put(20,10){\line(0,1){8}}
\put(35,10){\line(-1,1){8}}
\put(41,10){\line(1,1){8}}
\put(45,10){\line(3,1){27}}
\put(60,10){\line(-1,1){8}}
\put(83,10){\line(0,1){8}}
\end{picture} \\
\hline
\setlength{\unitlength}{.45 mm}
\begin{picture}(65,28)
\put(0,20){(14)}
\put(13,20){$\gs\gp(n+2)$}
\put(48,20){$\gs\gp(2)$}
\put(13,5){$\gs\gp(n)$}
\put(48,5){$\gs\gp(2)$}
\put(21,10){\line(0,1){8}}
\put(53,10){\line(0,1){8}}
\put(51,10){\line(-2,1){15}}
\end{picture}
&
\setlength{\unitlength}{.45 mm}
\begin{picture}(85,28)
\put(0,20){(17)}
\put(13,20){$\gs\gu(n+2)$}
\put(58,20){$\gs\gp(m+1)$}
\put(11,5){$\gs\gu(n)$}
\put(25,0){$\gs\gu(2) = \gs\gp(1)$}
\put(63,5){$\gs\gp(m)$}
\put(21,10){\line(0,1){8}}
\put(70,10){\line(0,1){8}}
\put(47,5){\line(1,1){13}}
\put(35,5){\line(-1,1){13}}
\end{picture}
&
\setlength{\unitlength}{.45 mm}
\begin{picture}(105,28)
\put(0,20){(20)}
\put(13,20){$\gs\gp(n+1)$}
\put(45,20){$\gs\gp(2)$}
\put(70,20){$\gs\gp(m+1)$}
\put(13,5){$\gs\gp(n)$}
\put(35,5){$\gs\gp(1)$}
\put(57,5){$\gs\gp(1)$}
\put(75,5){$\gs\gp(m)$}
\put(17,10){\line(0,1){8}}
\put(35,10){\line(-1,1){8}}
\put(41,10){\line(1,1){8}}
\put(60,10){\line(-1,1){8}}
\put(65,10){\line(1,1){8}}
\put(83,10){\line(0,1){8}}
\end{picture} \\
\hline
\setlength{\unitlength}{.45 mm}
\begin{picture}(65,28)
\put(0,20){(15)}
\put(13,20){$\gs\go(n)$}
\put(38,20){$\gs\go(n+1)$}
\put(28,5){$\gs\go(n)$}
\put(33,10){\line(-1,1){8}}
\put(35,10){\line(1,1){8}}
\end{picture}
&
\setlength{\unitlength}{.45 mm}
\begin{picture}(85,28)
\put(0,20){(18)}
\put(13,20){$\gs\gp(n+1)$}
\put(58,20){$\gs\gp(m+1)$}
\put(11,5){$\gs\gp(n)$}
\put(38,5){$\gs\gp(1)$}
\put(63,5){$\gs\gp(m)$}
\put(21,10){\line(0,1){8}}
\put(73,10){\line(0,1){8}}
\put(50,10){\line(2,1){15}}
\put(38,10){\line(-2,1){15}}
\end{picture}
&
\setlength{\unitlength}{.45 mm}
\begin{picture}(105,28)
\put(0,20){(21)}
\put(38,20){$\gg_1 \ \ \ \ \dots \ \ \ \ \gg_n$}
\put(38,5){$\gh'_1 \ \ \ \ \dots \ \ \ \ \gh'_n$}
\put(40,10){\line(0,1){8}}
\put(52,12){$\dots$}
\put(70,10){\line(0,1){8}}
\put(20,5){$\gz_\gh$}
\put(25,10){\line(3,2){10}}
\put(26,6){\line(3,1){40}}
\end{picture} \\
\hline
\end{tabular}
\end{equation}
}

\begin{definition}\label{br-symm}
Let $M_u = G_u/H_u$ be one of the entries in {\rm (\ref{brion-compact})}
excluding entry $(21)$.
For entries $(13)$, $(14)$, $(15)$, $(16)$, $(17)$ and $(18)$ express
$\gg_u = \gg_{1,u} \oplus \gg_{2,u}$ with $\gg_{u,i}$
nonzero and simple.  For entries $(19)$ and $(20)$ express
$\gg_u = \gg_{1,u} \oplus \gg_{2,u} \oplus \gg_{3,u}$ with $\gg_{u,i}$
nonzero and simple.  Let $\gh_{u,i}$ denote the image of $\gh_u$ under
the projection $\gg_u \to \gg_{u,i}$\,.  Then each $(\gg_{u,i},\gh_{u,i})$
corresponds to a compact simply connected irreducible riemannian symmetric
{\rm (or at least weakly symmetric)} space $M_{u,i} = G_{u,i}/H_{u,i}$\,,
and $\widetilde{M_u} = \prod M_{u,i}$ is the {\bf riemannian unfolding} of
$M_u$\,.
\end{definition}

Now we run through the cases of (\ref{brion-compact}).  The results will
be summarized in Table 4.12 below.
\medskip

\centerline{
{\footnotesize
\fbox{\setlength{\unitlength}{.40 mm}
\begin{picture}(220,28)
\put(0,14){(13)}
\put(23,20){$\gs\gu(n)$}
\put(48,20){$\gs\gu(n+1)$}
\put(23,5){$\gs\gu(n)$}
\put(55,5){$\gu(1)$}
\put(31,10){\line(0,1){8}}
\put(60,10){\line(0,1){8}}
\put(36,10){\line(2,1){15}}
\put(90,12){$[SU(n)SU(n+1)]/S[U(n)\times U(1)]$}
\end{picture}
}
}
}

The real form family of $SU(n)/SU(n)$ consists of the $G_1/H_1$ given by
$$
\begin{aligned}
&SL(n;\C)/SL(n;\C),\, SL(n;\R)/SL(n;\R),\, SL(n';\H)/SL(n';\H)
	\text{ if } n = 2n',\\
& \text{and one of the } SU(k,\ell)/SU(k,\ell) \text{ where } k+\ell = n.
\end{aligned}
$$
The real form family of $SU(n+1)/S[U(n)\times U(1)]$ consists of the
$\widetilde{M}_2 = G_2/H_2$ in (\ref{b1}) with $(m,n)$ replaced by $(n,1)$.
That family is
$$
\begin{aligned}
&SL(n+1;\C)/S[GL(n;\C)\times GL(1;\C)],\,
 SL(n+1;\R)/S[GL(n;\R)\times GL(1;\R)],	\\
&SU(n-a+1,a)/S[U(n-a,a)\times U(1,0)],\, SU(n-a,1+a)/S[U(n-a,a)\times U(0,1)].
\end{aligned}
$$
We can fold these together exactly in the cases where $H_1$ is the
semisimple part of $H_2$\,, so the possibilities for $G/H$ are
\begin{equation}\label{C1}
\begin{aligned}
{\rm (i)\,\,}& [SL(n;\C) \times SL(n+1;\C)]/[SL(n;\C) \times GL(1;\C)] \\
{\rm (ii)\,\,}&[SL(n;\R) \times SL(n+1;\R)]/[SL(n;\R) \times GL(1;\R)] \\
{\rm (iii)\,\,}&[SU(k,\ell) \times SU(k+1,\ell)]/[SU(k,\ell) \times U(1)]\\
{\rm (iv)\,\,}&[SU(k,\ell) \times SU(k,\ell+1)]/[SU(k,\ell) \times U(1)]
\end{aligned}
\end{equation}
In case (i), the metric irreducible summands of the tangent space have
signatures $(n^{2}-1,n^{2}-1)$ and $(2n,2n)$. In case (ii), those signatures are $\big(\frac{n(n+1)}{2}-1,\frac{n(n-1)}{2}\big)$ and $(n,n)$. In case (iii),
those signatures are $(2k\ell,k^2+\ell^2-1)$ and $(2\ell,2k)$. In case (iv), those signatures are $(2k\ell,k^2+\ell^2-1)$ and $(2k,2\ell)$.
\medskip

\centerline{
{\footnotesize
\fbox{\setlength{\unitlength}{.40 mm}
\begin{picture}(215,28)
\put(0,14){(14)}
\put(23,20){$\gs\gp(n+2)$}
\put(58,20){$\gs\gp(2)$}
\put(23,5){$\gs\gp(n)$}
\put(58,5){$\gs\gp(2)$}
\put(31,10){\line(0,1){8}}
\put(63,10){\line(0,1){8}}
\put(61,10){\line(-2,1){15}}
\put(90,12){$[Sp(n+2)\times Sp(2)]/[Sp(n)\times Sp(2)]$}
\end{picture}
}
}
}

For $Sp(n+2)/[Sp(n)\times Sp(2)$, we have the following possibilities:
$$
\begin{aligned}
&  Sp(n+2;\C)/[Sp(n;\C)\times Sp(2;\C)], \quad Sp(n+2;\R)/[Sp(n;\C)\times Sp(2;\R)] \\
& Sp(n-a+b,2-b+a)/[Sp(n-a,a)\times Sp(b,2-b)] \text{ for } 0\leqq a\leqq n \text{ and } 0\leqq b\leqq 2 \\
\end{aligned}
$$
Thus we have the following possibilities for this case:
\begin{equation}\label{C2}
\begin{aligned}
{\rm (i)\,\,} & [Sp(n+2;\C)\times Sp(2;\C)]/[Sp(n;\C)\times Sp(2;\C)] \\
{\rm (ii)\,\,} &[Sp(n+2;\R)\times Sp(2;\R)]/[Sp(n;\R)\times Sp(2;\R)] \\
{\rm (iii)\,\,} & [Sp(n-a+b,2-b+a)\times Sp(b,2-b)]/[Sp(n-a,a)\times Sp(b,2-b)] \\
&\phantom{XXXXXXXXXXXXXXXXXXXX} \text{ for } 0\leqq a\leqq n \text{ and } b\in \{0,1,2\} \\
\end{aligned}
\end{equation}
In case (i), the metric irreducible summands of the tangent space have
signatures $(10,10)$ and $(8n,8n)$.
In case (ii), those signatures are $(6,4)$ and $(4n,4n)$.
In case (iii), those signatures are $(8b-4b^2,4b^2-8b+10)$
and $(8n+4(a-n)b+4a(b-2),4(n-a)b+4a(2-b))$.
\medskip

\centerline{
{\footnotesize
\fbox{\setlength{\unitlength}{.45 mm}
\begin{picture}(190,28)
\put(0,15){(15)}
\put(23,20){$\gs\go(n)$}
\put(55,20){$\gs\go(n+1)$}
\put(38,5){$\gs\go(n)$}
\put(43,10){\line(-1,1){8}}
\put(45,10){\line(1,1){8}}
\put(100,12){$[SO(n)\times SO(n+1)]/SO(n)$}
\end{picture}
}
}
}

For $SO(n+1)/SO(n)$, the possibilities are
$$
SO(n+1;\C)/SO(n,\C), \text{ and } SO(n-a+b,1-b+a)/SO(n-a,a) \text{ for }
0\leqq a\leqq n \text{ and } 0\leqq b\leqq 1.
$$
Thus the possibilities for $M$ in this case are:
\begin{equation}\label{C3}
\begin{aligned}
{\rm (i)\,\,} &[SO(n,\C)\times SO(n+1;\C)]/SO(n,\C)  \\
{\rm (ii)\,\,} & [SO(n-a,a)\times SO(n-a,a+1)]/SO(n-a,a) \text{ for }
	0\leqq a\leqq n \\
{\rm (iii)\,\,} & [SO(n-a,a)\times SO(n-a+1,a)]/SO(n-a,a) \text{ for }
	0\leqq a\leqq n
\end{aligned}
\end{equation}
In case (i), the metric irreducible subspaces of the real tangent space
have signatures $\bigl(\frac{n(n-1)}{2},\frac{n(n-1)}{2}\bigr)$ and $(n,n)$.
In case (ii), those signatures are $\bigl((n-a)a,\frac{n(n-1)}{2}-(n-a)a\bigr)$ and $(n-a,a)$. In case (iii), those signatures are $\bigl((n-a)a,\frac{n(n-1)}{2}-(n-a)a\bigr)$ and $(a,n-a)$.
\medskip

\centerline{
{\footnotesize
\fbox{\setlength{\unitlength}{.45 mm}
\begin{picture}(260,28)
\put(0,12){(16)}
\put(23,20){$\gs\gu(n+2)$}
\put(68,20){$\gs\gp(m+1)$}
\put(21,5){$\gu(n)$}
\put(35,0){$\gs\gu(2) = \gs\gp(1)$}
\put(73,5){$\gs\gp(m)$}
\put(31,10){\line(0,1){8}}
\put(83,10){\line(0,1){8}}
\put(60,5){\line(1,1){13}}
\put(48,5){\line(-1,1){13}}
\put(110,12){$[SU(n+2)\times Sp(m+1)]/[U(n)\times SU(2)\times Sp( m)]$}
\end{picture}
}
}
}

Let $G_u/H_u = [SU(n+2)\times Sp(m+1)]/[U(n)\times SU(2)\times Sp( m)]$ as in
entry (14) on Table \ref{brion-compact}.  The real form family of
$M_1 = SU(n+2)/S[U(n)\times U(2)]$ consists of the
$G_1/H_1$ in (\ref{b1}) with $(m,n)$ replaced by $(n,2)$.
That family is
$$
\begin{aligned}
& SL(n+2;\C)/S[GL(n;\C)\times GL(2;\C)], \quad SL(n+2;\R)/S[GL(n;\R)\times GL(2;\R)] \\
&  SL(n'+1;\H)/S[GL(n';\H)\times GL(1;\H)]
        \text{ where } n = 2n' \\
& SU(n-a+b,2-b+a)/S[U(n-a,a)\times U(b,2-b)] \text{ for } a\leqq n \text{ and } b\leqq 2. \\
& SL(4;\R)/GL'(2;\C), \quad SU^*(4)/[GL'(2;\C)], 
	\quad SU(2,2)/[SL(2;\C)\times \R]. \\
\end{aligned}
$$
where $GL'(m;\C) := \{g \in GL(m;\C) \mid |\det(g)| = 1\}$ and
$GL(k;\H) := SL(k;\H)\times \R^+$. The real form family of
$M_2 = Sp(m+1)/[Sp(m)\times Sp(1)]$ consists of the
$$
\begin{aligned}
&  Sp(m+1;\C)/[Sp(m;\C)\times Sp(1;\C)], \quad
Sp(m+1;\R)/[Sp(m;\R)\times Sp(1;\R)],\text{ and }\\
& Sp(m-a+b,1-b+a)/[Sp(m-a,a)\times Sp(b,1-b)]
	\text{ for } 0\leqq a\leqq m \text{ and } 0\leqq b\leqq 1. \\
\end{aligned}
$$
Fitting these together, the real form family of $M_u =
[SU(n+2)\times Sp(m+1)]/[U(n)\times SU(2)\times Sp( m)]$ consists of the
$$
\begin{aligned}
& [SL(n+2;\C)\times Sp(m+1;\C)]/[GL(n;\C)\times SL(2;\C)\times Sp(m;\C)] \\
&[SL(n+2;\R)\times Sp(m+1;\R)]/[GL(n;\R)\times SL(2;\R)\times Sp(m;\R)] \\
& [SU(n-a_1+b_1,2-b_1+a_1)\times Sp(m-a_2+b_2,1-b_2+a_2)] \\
&\phantom{XXXX}
 /[U(n-a_1,a_1)\times SU(2)\times Sp(m-a_2,a_2)]
        \text{ where } 0\leqq a_1\leqq n, 0\leqq a_2\leqq m, b_1=0,2, b_2=0,1 \\
& [SU(n+1-a,a+1)\times Sp(m+1;\R)]/[U(n-1,a)\times SU(1,1)\times Sp(m;\R)] \text{ for } 0\leqq a\leqq n\\
&  [SL(4;\R)\times Sp(m+1;\C)]/[GL'(2;\C)\times Sp(m;\C)], \quad [SU^*(4)\times Sp(m+1;\C)]/[SL(2;\C)\times Sp(m;\C)\times T] \\
&  [SU(2,2)\times Sp(m+1;\C)]/[SL(2;\C)\times Sp(m;\C) \times \R]
\end{aligned}
$$
This list is not convenient for analysis of the metric irreducible
subspaces of the tangent space, so we refine it as follows.

\begin{equation}\label{C4}
\begin{aligned}
{\rm (i)\,\,} & [SL(n+2;\C)\times Sp(m+1;\C)]/[GL(n;\C)\times SL(2;\C)\times Sp(m;\C)] \\
{\rm (ii)\,\,} &[SL(n+2;\R)\times Sp(m+1;\R)]/[GL(n;\R)\times SL(2;\R)\times Sp(m;\R)] \\
{\rm (iii)\,\,} &[SL(n'+1;\H)\times Sp(m-a,1+a)]/[GL(n';\H)\times SU(2)\times Sp(m-a,a)] \text{ for } 0\leqq a\leqq n \\
{\rm (iv)\,\,} &[SL(n'+1;\H)\times Sp(m-a+1,a)]/[GL(n';\H)\times SU(2)\times Sp(m-a,a)] \text{ for } 0\leqq a\leqq n \\
{\rm (v)\,\,} & [SU(n-a_1+b_1,2-b_1+a_1)\times Sp(m-a_2,1+a_2)] \\
&  /[U(n-a_1,a_1)\times SU(2)\times Sp(m-a_2,a_2)], \quad
        \text{ where } 0\leqq a_1\leqq n, 0\leqq a_2\leqq m, b_1\in \{0,2\} \\
 {\rm (vi)\,\,} & [SU(n-a_1+b_1,2-b_1+a_1)\times Sp(m-a_2+1,a_2)] \\
&
 /[U(n-a_1,a_1)\times SU(2)\times Sp(m-a_2,a_2)], \quad
        \text{ where } 0\leqq a_1\leqq n, 0\leqq a_2\leqq m, b_1\in \{0,2\} \\
{\rm (vii)\,\,} & [SU(n+1-a,a+1)\times Sp(m+1;\R)]/[U(n-a,a)\times SU(1,1)\times Sp(m;\R)] \text{ for } 0\leqq a\leqq n\\
{\rm (viii)\,\,} &  [SL(4;\R)\times Sp(m+1;\C)]/[SL(2;\C)\times Sp(m;\C) \times T] \\
{\rm (ix)\,\,} &  [SU^*(4)\times Sp(m+1;\C)]/[SL(2;\C)\times Sp(m;\C)\times T] \\
{\rm (x)\,\,} &  [SU(2,2)\times Sp(m+1;\C)]/[SL(2;\C)\times Sp(m;\C) \times \R]
\end{aligned}
\end{equation}

In case (i), the metric irreducible subspaces of the real tangent space
have signatures $(3,3)$, $(4n,4n)$ and $(4m,4m)$. In case (ii), those signatures are $(2,1)$, $(2n,2n)$ and $(2m,2m)$. In case (iii), those signatures are $(0,3)$, $(4n,4n)$ and $(4m-4a,4a)$. In case (iv), those signatures are $(0,3)$, $(4n,4n)$ and $(4a,4m-4a)$. In case (v), those signatures are $(0,3)$, $(4(n-a_1)-2b_1(n-2a_1),2b_1(n-2a_1)+4a_1)$ and $(4m-4a_2,4a_2)$.
In case (vi), those signatures are $(0,3)$, $(4(n-a_1)-2b_1(n-2a_1),2b_1(n-2a_1)+4a_1)$ and $(4a_2,4m-4a_2)$.
In case (vii), those signatures are $(2,1)$, $(2n,2n)$ and $(2m,2m)$.
In case (viii), those signatures are $(3,3)$, $(6,2)$ and $(4m,4m)$.
In case (ix), those signatures are $(3,3)$, $(2,6)$ and $(4m,4m)$.
In case (x), those signatures are $(3,3)$, $(4,4)$ and $(4m,4m)$.
\medskip

\centerline{
{\footnotesize
\fbox{\setlength{\unitlength}{.45 mm}
\begin{picture}(250,28)
\put(0,12){(17)}
\put(23,20){$\gs\gu(n+2)$}
\put(68,20){$\gs\gp(m+1)$}
\put(21,5){$\gs\gu(n)$}
\put(35,0){$\gs\gu(2) = \gs\gp(1)$}
\put(73,5){$\gs\gp(m)$}
\put(31,10){\line(0,1){8}}
\put(80,10){\line(0,1){8}}
\put(57,5){\line(1,1){13}}
\put(45,5){\line(-1,1){13}}
\put(100,12){$[SU(n+2)\times Sp(m+1)]/[SU(n)\times SU(2)\times Sp(m)]$}
\end{picture}
}
}
}

Using the calculations in \S \ref{sec2}, the real form family for
$SU(n+2)/[SU(n)\times SU(2)]$ is
$$
\begin{aligned}
& SL(n+2;\C)/[SL(n;\C)\times SL(2;\C)], \quad
  SL(n+2;\R)/[SL(n;\R)\times SL(2;\R)], \quad SL(4;\R)/SL(2;\C) \\
&  SU^*(4)/SL(2;\C), \quad SU(2,2)/SL(2;\C), \quad SL(n'+1;\H)/[SL(n';\H)\times SL(1;\H)]
	\text{ where } n = 2n', \text{ and }\\
& SU(n-a+b,2-b+a)/[SU(n-a,a)\times SU(b,2-b)] \text{ for } a\leqq n \text{ and } b\leqq 2.
\end{aligned}
$$
Combining that with the possibilities for $Sp(n+1)/[Sp(n)\times Sp(1)]$, and
refining the result as appropriate for computation of the metric
irreducible subspaces of the real tangent space, this case gives us

\begin{equation}\label{C5}
\begin{aligned}
{\rm (i)\,\,} & [SL(n+2;\C)\times Sp(m+1;\C)]/[SL(n;\C)\times SL(2;\C)\times Sp(m;\C)] \\
{\rm (ii)\,\,} &[SL(n+2;\R)\times Sp(m+1;\R)]/[SL(n;\R)\times SL(2;\R)\times Sp(m;\R)] \\
{\rm (iii)\,\,} &[SL(n'+1;\H)\times Sp(m-a,1+a)]/[SL(n';\H)\times SU(2)\times Sp(m-a,a)] \text{ for } 0\leqq a\leqq n \\
{\rm (iv)\,\,} &[SL(n'+1;\H)\times Sp(m-a+1,a)]/[GL(n';\H)\times SU(2)\times Sp(m-a,a)] \text{ for } 0\leqq a\leqq n \\
{\rm (v)\,\,} & [SU(n-a_1+b_1,2-b_1+a_1)\times Sp(m-a_2,1+a_2)] \\
&  /[SU(n-a_1,a_1)\times SU(2)\times Sp(m-a_2,a_2)], \quad
        \text{ where } 0\leqq a_1\leqq n, 0\leqq a_2\leqq m, b_1\in \{0,2\} \\
 {\rm (vi)\,\,} & [SU(n-a_1+b_1,2-b_1+a_1)\times Sp(m-a_2+1,a_2)] \\
&
 /[SU(n-a_1,a_1)\times SU(2)\times Sp(m-a_2,a_2)], \quad
        \text{ where } 0\leqq a_1\leqq n, 0\leqq a_2\leqq m, b_1\in \{0,2\} \\
{\rm (vii)\,\,} & [SU(n+1-a,a+1)\times Sp(m+1;\R)]/[SU(n-a,a)\times SU(1,1)\times Sp(m;\R)] \text{ for } 0\leqq a\leqq n\\
{\rm (viii)\,\,} &  [SL(4;\R)\times Sp(m+1;\C)]/[SL(2;\C)\times Sp(m;\C)] \\
{\rm (ix)\,\,} &  [SU^*(4)\times Sp(m+1;\C)]/[SL(2;\C)\times Sp(m;\C)] \\
{\rm (x)\,\,} &  [SU(2,2)\times Sp(m+1;\C)]/[SL(2;\C)\times Sp(m;\C)]
\end{aligned}
\end{equation}

In case (i), the metric irreducible subspaces of the real tangent space
have signatures $(3,3)$, $(1,0)$, $(0,1)$, $(4n,4n)$ and $(4m,4m)$.
In case (ii), those signatures are $(2,1)$, $(1,0)$, $(2n,2n)$ and $(2m,2m)$.
In case (iii), those signatures are $(0,3)$, $(1,0)$, $(4n,4n)$ and $(4m-4a,4a)$.
In case (iv), those signatures are $(0,3)$, $(1,0)$, $(4n,4n)$ and $(4a,4m-4a)$.
In case (v), those signatures are $(0,3)$, $(0,1)$, $(4(n-a_1)-2b_1(n-2a_1),2b_1(n-2a_1)+4a_1)$ and $(4m-4a_2,4a_2)$.
In case (vi), those signatures are $(0,3)$, $(0,1)$, $(4(n-a_1)-2b_1(n-2a_1),2b_1(n-2a_1)+4a_1)$ and $(4a_2,4m-4a_2)$.
In case (vii), those signatures are $(2,1)$, $(0,1)$, $(2n,2n)$ and $(2m,2m)$.
In case (viii), those signatures are $(3,3)$, $(0,1)$, $(6,2)$ and $(4m,4m)$.
In case (ix), those signatures are $(3,3)$, $(0,1)$, $(2,6)$ and $(4m,4m)$.
In case (x), those signatures are $(3,3)$, $(1,0)$, $(4,4)$ and $(4m,4m)$.
\medskip

\centerline{
{\footnotesize
\fbox{\setlength{\unitlength}{.45 mm}
\begin{picture}(200,28)
\put(0,12){(18)}
\put(23,20){$\gs\gp(n+1)$}
\put(68,20){$\gs\gp(m+1)$}
\put(21,5){$\gs\gp(n)$}
\put(48,5){$\gs\gp(1)$}
\put(73,5){$\gs\gp(m)$}
\put(31,10){\line(0,1){8}}
\put(83,10){\line(0,1){8}}
\put(60,10){\line(2,1){15}}
\put(48,10){\line(-2,1){15}}
\put(125,12){$\frac{Sp(n+1)\times Sp(m+1)}{Sp(n)\times Sp(m)\times Sp(1)}$}
\end{picture}
}
}
}

Here are the possibilities for real forms:
\begin{equation}\label{C6}
\begin{aligned}
{\rm (i)\,\,} & [Sp(n+1;\C)\times Sp(m+1;\C)]
	/[Sp(n;\C)\times Sp(1;\C) \times Sp(m;\C)] \\
{\rm (ii)\,\,} &[Sp(n+1;\R)\times Sp(m+1;\R)]
	/[Sp(n;\R)\times Sp(1;\R)\times Sp(m;\R)] \\
{\rm (iii)\,\,} & [Sp(n-a_1+b_1,1-b_1+a_1)\times Sp(m-a_2+b_2,1-b_2+a_2)] \\
	&\phantom{X}
 		/[Sp(n-a_1,a_1) \times Sp(1) \times Sp(m-a_2,a_2)]
		\text{ where } 0\leqq a_1\leqq n,
		0\leqq a_2\leqq  m, b_1, b_2\in \{0,1\} \\
{\rm (iv) \,\,} & Sp(n+1;\C)/[Sp(n;\C)\times Sp(1)] \text{ where } m=n \\
{\rm (v) \,\,} & Sp(n+1;\C)/[Sp(n;\C)\times Sp(1;\R)] \text{ where } m=n
\end{aligned}
\end{equation}
The first three correspond to inner automorphisms of $G_u$, preserving
each simple factor, and the last two to an
involutive automorphism $\alpha$ that interchanges the two simple
factors.  Then $\alpha$ is given by the interchange
$(x_1,x_2) \mapsto (x_2,x_1)$ and is the identity on the common $Sp(1)$
factor of $H_u$\,, so the corresponding real form $G/H$ is given by
$G = Sp(n+1;\C)$ of $G_\C$ and $H = Sp(n;\C)\times Sp(1)$ of $H_\C$\,.
In detail we are using
\begin{lemma}\label{switch}
Let $\gm_1$ and $\gm_2$ be Lie algebras, $\gm = \gm_1 \oplus \gm_2$\,, and
$\alpha$ an involutive automorphism of $\gm$ that exchanges the $\gm_i$
(so $\gm_1 \cong \gm_2$).
Write $\gm = \gm_+ + \gm_-$\,, sum the $(\pm 1)$--eigenspaces
of $\alpha$.  Then the corresponding Lie algebra
$\gm_+ + \sqrt{-1}\,\gm_- \cong (\gm_1)_\C$ as a real Lie algebra.
\end{lemma}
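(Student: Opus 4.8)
The plan is to reduce at once to the ``swap'' model and then recognize $\gm_+ + \sqrt{-1}\,\gm_-$ as the graph of complex conjugation. Since $\alpha$ exchanges $\gm_1$ and $\gm_2$, the restriction $\phi := \alpha|_{\gm_1}\colon \gm_1 \to \gm_2$ is an isomorphism of Lie algebras, and $\alpha^2 = \mathrm{id}$ forces $\alpha|_{\gm_2} = \phi^{-1}$. First I would change coordinates on $\gm = \gm_1 \oplus \gm_2$ by $(x,z) \mapsto (x, \phi^{-1}z)$; this identifies $\gm$ with $\gm_1 \oplus \gm_1$ and carries $\alpha$ to the interchange $(a,b) \mapsto (b,a)$. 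Consequently $\gm_+ = \{(a,a) : a \in \gm_1\}$ and $\gm_- = \{(a,-a) : a \in \gm_1\}$.

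Next I would write out the twisted real form inside $\gm_\C = (\gm_1)_\C \oplus (\gm_1)_\C$. A general element of $\gm_+ + \sqrt{-1}\,\gm_-$ is $(a,a) + \sqrt{-1}\,(b,-b) = (a + \sqrt{-1}\,b,\ a - \sqrt{-1}\,b)$ with $a,b \in \gm_1$; that is, it is the set of pairs $(w, \bar w)$ with $w$ ranging over $(\gm_1)_\C$ and $\bar w$ the conjugate of $w$ relative to the real form $\gm_1$. I would also record that $\gm_+ + \sqrt{-1}\,\gm_-$ really is a real subalgebra of $\gm_\C$: since $\alpha$ is an involution one has $[\gm_+,\gm_+] \subseteq \gm_+$, $[\gm_+,\gm_-] \subseteq \gm_-$ and $[\gm_-,\gm_-] \subseteq \gm_+$, so the bracket closes up after the $\sqrt{-1}$ twist.

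Finally I would exhibit the asserted isomorphism as projection to the first factor. The projection $p_1\colon \gm_\C = (\gm_1)_\C \oplus (\gm_1)_\C \to (\gm_1)_\C$ is a homomorphism of complex, hence of real, Lie algebras, and its restriction to $\gm_+ + \sqrt{-1}\,\gm_-$ sends $(w,\bar w) \mapsto w$. This is surjective, since $w = a + \sqrt{-1}\,b$ with $a,b \in \gm_1$ is arbitrary in $(\gm_1)_\C$, and injective, since $w = 0$ with $a,b$ real forces $a = b = 0$; it therefore restricts to an isomorphism of real Lie algebras $\gm_+ + \sqrt{-1}\,\gm_- \cong (\gm_1)_\C$. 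Nothing here is deep: the only point that needs a moment's care is the bookkeeping in the coordinate change identifying $\gm_2$ with $\gm_1$ via $\alpha$ — in particular checking that $\alpha|_{\gm_2} = \phi^{-1}$, not $\phi$, so that $\alpha$ genuinely becomes the interchange — after which every step is immediate.
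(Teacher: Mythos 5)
Your proposal is correct and follows essentially the same route as the paper: identify $\gm_2$ with $\gm_1$ via $\alpha$ so that $\alpha$ becomes the interchange, compute $\gm_\pm$ as the diagonal and antidiagonal, and recognize $\gm_+ + \sqrt{-1}\,\gm_-$ as $\{(w,\overline{w})\mid w \in (\gm_1)_\C\}$, isomorphic to $(\gm_1)_\C$ by first-factor projection. The extra checks you record (that $\alpha|_{\gm_2}=\phi^{-1}$ and that the bracket closes) are details the paper leaves implicit.
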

\begin{proof}
Identify the $\gm_i$ by means of $\alpha$, so $\gm = \gm_1 \oplus \gm_1$
with $\alpha$ given by $\alpha(x,y) = (y,x)$.  Then
$\gm_+ = \{(x,x)\mid x \in \gm_1\}$ and $\gm_- = \{(y,-y)\mid y \in \gm_1\}$,
and $\gm_+ + \sqrt{-1}\,\gm_- = \{(x,x) + \sqrt{-1}\,(y,-y)\}
= \{(z,\overline{z})\mid z \in (\gm_1)_\C\}$, which is isomorphic to
$(\gm_1)_\C$ as a real Lie algebra.
\end{proof}

In case (i), the metric irreducible subspaces of the real tangent space are
of signatures $(3,3)$, $(4n,4n)$ and $(4m,4m)$.
In case (ii), those signatures are $(2,1)$, $(2n,2n)$ and $(2m,2m)$.
In case (iii), those signatures are
$(0,3)$, $(4(n-a_1)-4b_1(n-2a_1),4a_1+4b_1(n-2a_1))$
and $(4(m-a_2)-4b_2(m-2a_2),4a_2+4b_2(m-2a_2))$.
In case (iv), those signatures are $(3,0)$ and $(4n,4n)$.
In case (v), those signatures are $(1,2)$ and $(4n,4n)$.
\medskip

\centerline{
{\footnotesize
\fbox{\setlength{\unitlength}{.50 mm}
\begin{picture}(200,28)
\put(0,12){(19)}
\put(23,20){$\gs\gp(n+1)$}
\put(55,20){$\gs\gp(\ell+1)$}
\put(85,20){$\gs\gp(m+1)$}
\put(23,5){$\gs\gp(n)$}
\put(45,5){$\gs\gp(1)$}
\put(63,5){$\gs\gp(\ell)$}
\put(85,5){$\gs\gp(m)$}
\put(30,10){\line(0,1){8}}
\put(45,10){\line(-1,1){8}}
\put(51,10){\line(1,1){8}}
\put(55,10){\line(3,1){27}}
\put(70,10){\line(-1,1){8}}
\put(93,10){\line(0,1){8}}
\put(125,12){$\frac{Sp(n+1)\times Sp(\ell+1)\times Sp(m+1)}{Sp(n)\times Sp(\ell)\times Sp(m)\times Sp(1)}$}
\end{picture}
}
}
}

Let $M_u = G_u/H_u = [Sp(n+1)\times Sp(\ell +1)\times Sp(m+1)]
/[Sp(n)\times Sp(\ell)\times Sp(m)\times Sp(1)]$\,, $n \leqq \ell \leqq m$.
Then
$M_{1,u} = Sp(n+1)/[Sp(n)\times Sp(1)]$,
$M_{2,u} = Sp(\ell+1)/[Sp(\ell)\times Sp(1)]$ and
$M_{3,u} = Sp(m+1)/[Sp(n)\times Sp(1)]$.
Let $\alpha$ be an involutive automorphism of $G_u$\,.  It induces a
permutation $\overline{\alpha}$ of $\{M_{1,u}\,, M_{2,u}\,, M_{3,u}\}$.
Up to conjugacy, and using $\alpha^2 = 1$, the possibilities
are (a) $\alpha$ is inner and $\overline{\alpha} = 1$,
and (b) $\alpha$ is outer, $n = \ell$, $\overline{\alpha}$ exchanges
$M_{1,u}$ and $M_{2,u}$\,, and $\overline{\alpha}(M_{3,u}) = M_{3,u}$\,.
In case (b) we argue as in (\ref{C6}).  Now the possibilities for
$M = G/H$ are
\begin{equation}\label{C7}
\begin{aligned}
{\rm (i)\,\,} & [Sp(n+1;\C)\times Sp(\ell+1;\C)\times Sp(m+1;\C)]
	/[Sp(n;\C)\times Sp(\ell;\C)\times Sp(m;\C) \times Sp(1;\C)] \\
{\rm (ii)\,\,} &[Sp(n+1;\R)\times Sp(\ell+1;\R)\times Sp(m+1;\R)]
	/[Sp(n;\R)\times Sp(\ell;\R)\times Sp(m;\R) \times Sp(1;\R)] \\
{\rm (iii)\,\,} & [Sp(n-a_1+b_1,1-b_1+a_1)\times Sp(\ell-a_2+b_2,1-b_2+a_2)
		\times Sp(m-a_3+b_3,1-b_3+a_3)] \\
	&\phantom{XXXXXXX} /[Sp(n-a_1,a_1)\times Sp(\ell-a_2,a_2)
		\times Sp(m-a_3,a_3)\times Sp(1)] \\
	&\phantom{XXXXXXXXX} \text{ where } 0\leqq a_1\leqq n,
	  0\leqq a_2\leqq \ell, 0\leqq a_3\leqq m, b_1, b_2, b_3\in \{0,1\} \\
{\rm (iv)\,\,} & [Sp(n+1;\C) \times Sp(m+1;\R)]
	/[Sp(n;\C)\times Sp(1;\R)\times Sp(m;\R)] \text{ if } n = \ell\\
{\rm (v)\,\,} & [Sp(n+1;\C) \times Sp(m+1-a,a)]
	/[Sp(n;\C)\times Sp(1)\times Sp(m-a,a)],\,\, 0 \leqq a \leqq m,\,\,
	\text{ if } n = \ell \\
{\rm (vi)\,\,} & [Sp(n+1;\C) \times Sp(m-a,a+1)]
	/[Sp(n;\C)\times Sp(1)\times Sp(m-a,a)],\,\, 0 \leqq a \leqq m,\,\,
	\text{ if } n = \ell \\
\end{aligned}
\end{equation}
Here the first three cases correspond to inner automorphisms, case (a),
and the remaining three correspond to outer automorphisms $\alpha$, case (b).
There we apply Lemma \ref{switch} to the interchange $G_{1,u} \leftrightarrow
G_{2,u}$ defined by $\alpha$, $\alpha|_{G_{3,u}}$ is any involutive
automorphism.

In case (i), the signatures of the metric irreducible subspaces of the
real tangent space of $M = G/H$ are $(3,3)$, $(3,3)$, $(4n,4n)$, $(4l,4l)$ and $(4m,4m)$.
In case (ii) those signatures are $(2,1)$, $(2,1)$, $(2n,2n)$, $(2\ell,2\ell)$
and $(2m,2m)$.
In case (iii) those signatures are $(0,3)$, $(0,3)$,
$(4(n-a_1)-4b_1(n-2a_1),4a_1+4b_1(n-2a_1))$,
$(4(\ell-a_2)-4b_2(\ell-2a_2),4a_2+4b_2(\ell-2a_2))$
and $(4(m-a_3)-4b_3(m-2a_3),4a_3+4b_3(m-2a_3))$.
In case (iv) those signatures are $(1,2)$, $(2,1)$, $(4n,4n)$ and $(2m,2m)$.
In case (v) those signatures are $(3,0)$, $(0,3)$, $(4n,4n)$ and $(4a,4m-4a)$.
In case (vi) those signatures are $(3,0)$, $(0,3)$, $(4n,4n)$ and $(4m-4a,4a)$.
\medskip

\centerline{
{\footnotesize
\fbox{\setlength{\unitlength}{.50 mm}
\begin{picture}(200,28)
\put(0,12){(20)}
\put(23,20){$\gs\gp(n+1)$}
\put(55,20){$\gs\gp(2)$}
\put(85,20){$\gs\gp(m+1)$}
\put(23,5){$\gs\gp(n)$}
\put(45,5){$\gs\gp(1)$}
\put(63,5){$\gs\gp(1)$}
\put(85,5){$\gs\gp(m)$}
\put(27,10){\line(0,1){8}}
\put(45,10){\line(-1,1){8}}
\put(51,10){\line(1,1){8}}
\put(70,10){\line(-1,1){8}}
\put(75,10){\line(1,1){8}}
\put(93,10){\line(0,1){8}}
\put(125,12){$\frac{Sp(n+1)\times Sp(2)\times Sp(m+1)}{Sp(n)\times Sp(1)\times Sp(1)\times Sp(m)}$}
\end{picture}
}
}
}

The real form family members defined by involutive inner automorphisms of $G_u$
are straightforward now.  If $m = n$ we also have the automorphism $\alpha$
that is the interchange $Sp(n+1) \leftrightarrow Sp(m+1)$ and preserves
$Sp(2)$.  Then $Sp(2)$ goes to a real form of $Sp(2;\C)$ that
contains $Sp(1;\C)$ as a symmetric subgroup. Again making use of Lemma \ref{switch}, the result is
\begin{equation}\label{C8}
\begin{aligned}
{\rm (i)\,\,} & [Sp(n+1;\C)\times Sp(2;\C)\times Sp(m+1;\C)]
	/[Sp(n;\C)\times Sp(1;\C)\times Sp(1;\C)\times Sp(m;\C) ] \\
{\rm (ii)\,\,} &[Sp(n+1;\R)\times Sp(2;\R)\times Sp(m+1;\R)]
	/[Sp(n;\R)\times Sp(1;\R)\times Sp(1;\R) \times Sp(m;\R)] \\
{\rm (iii)\,\,} & [Sp(n-a_1+b_1,1-b_1+a_1)\times Sp(1,1)\times
		Sp(m-a_2+b_2,1-b_2+a_2)] \\
	& \phantom{XXX}/[Sp(n-a_1,a_1)\times Sp(1) \times Sp(1)
		\times Sp(m-a_2,a_2)]\\
        & \phantom{XXXXXXXXXX}
	\text{ where }0\leqq a_1\leqq n,\, 0\leqq a_2\leqq m,
		\text{ and } b_1,\,b_2\in \{0,1\} \\
{\rm (iv)\,\,} & [Sp(n-a_1+b_1,1-b_1+a_1)\times Sp(2)\times
		Sp(m-a_2+b_2,1-b_2+a_2)] \\
	& \phantom{XXX}/[Sp(n-a_1,a_1)\times Sp(1) \times Sp(1)
		\times Sp(m-a_2,a_2)]\\
        & \phantom{XXXXXXXXXX}
	\text{ where } 0\leqq a_1\leqq n,\, 0\leqq a_2\leqq m,\,
		\text{ and } b_1,\, b_2\in \{0,1\} \\
{\rm (v)\,\,} & [Sp(n+1;\C)\times Sp(2;\R)]/[Sp(n;\C)\times Sp(1;\C)]
	\text{ where } m = n \\
{\rm (vi)\,\,} & [Sp(n+1;\C)\times Sp(1,1)]/[Sp(n;\C)\times Sp(1;\C)]
	\text{ where } m = n \\
\end{aligned}
\end{equation}

In case (i), the metric irreducible subspaces of the real tangent space have
signatures $(3,3)$, $(3,3)$, $(4n,4n)$, $(4,4)$ and $(4m,4m)$.
In case (ii), those signatures are $(2,1)$, $(2,1)$,
$(2n,2n)$, $(2,2)$ and $(2m,2m)$.
In case (iii), those signatures are $(0,3)$, $(0,3)$,
$(4(n-a_1)-4b_1(n-2a_1),4a_1+4b_1(n-2a_1))$, $(4,0)$, and
$(4(m-a_2)-4b_2(m-2a_2),4a_2+4b_2(m-2a_2))$.
In case (iv), those signatures are $(0,3)$, $(0,3)$,
$(4(n-a_1)-4b_1(n-2a_1),4a_1+4b_1(n-2a_1))$, $(0,4)$, and
$(4(m-a_2)-4b_2(m-2a_2),4a_2+4b_2(m-2a_2))$.
In case (v), those signatures are $(3,3)$, $(4n,4n)$ and $(3,1)$.
In case (vi), those signatures are $(3,3)$, $(4n,4n)$ and $(1,3)$.

We summarize the computations for $G$ semisimple but not simple,
except for item (21), in the following table.  After the table we
will discuss item (21).

\addtocounter{equation}{1}
{\tiny
\begin{longtable}{|l|l|}
\caption*{\large \bf Table \thetable \quad Weakly Symmetric Pseudo-Riemannian
Homogeneous Spaces $G/H$, \\ \phantom{XXXXXXXX}$G/H$ Not Symmetric, 
$G$ Semisimple But Not Simple, $H$ Reductive}\\
\hline
{\normalsize $G/H$} & {\normalsize metric--irreducibles}  \\
\hline
\hline
\endfirsthead
\multicolumn{2}{l}{\textit{table continued from previous page $\dots$}} \\
\hline
{\normalsize $G/H$} & {\normalsize metric--irreducibles} \\
\hline
\hline
\endhead
\hline \multicolumn{2}{r}{\textit{$\dots$ table continued on next page}} \\
\endfoot
\hline
\endlastfoot
\multicolumn{2}{|c|} {\footnotesize \bf (13) Real Form Family of
$[SU(n)\times SU(n+1)]/S[U(n)\times U(1)]$}\\
\hline
{$[SL(n;\C) \times SL(n+1;\C)]/[SL(n;\C) \times GL(1;\C)]$}
	&  \begin{tabular}{l} $(n^{2}-1,n^{2}-1)$, $(2n,2n)$ \end{tabular}
	 \\
\hline
{$[SL(n;\R) \times SL(n+1;\R)]/[SL(n;\R) \times GL(1;\R)]$}
	& \begin{tabular}{l} $\bigl(\frac{n(n+1)}{2}-1,\frac{n(n-1)}{2}\bigr)$, $(n,n)$ \end{tabular}
	\\
\hline
{$[SU(k,\ell) \times SU(k+1,\ell)]/[SU(k,\ell) \times U(1)]$}
	& \begin{tabular}{l} $(2kl,k^2+l^2-1)$, $(2l,2k)$ \end{tabular}
	\\
\hline
{$[SU(k,\ell) \times SU(k,\ell+1)]/[SU(k,\ell) \times U(1)]$ }
& \begin{tabular}{l} $(2kl,k^2+l^2-1)$, $(2k,2l)$ \end{tabular}
 \\
\hline
\hline
\multicolumn{2}{|c|} {\footnotesize \bf (14) Real Form Family of $[Sp(n+2)\times Sp(2)]/[Sp(n)\times Sp(2)]$}\\
\hline
{$[Sp(n+2;\C)\times Sp(2;\C)]/[Sp(n;\C)\times Sp(2;\C)]$} &
	\begin{tabular}{l} $(10,10)$, $(8n,8n)$ \end{tabular} \\
\hline
{$[Sp(n+2;\R)\times Sp(2;\R)]/[Sp(n;\R)\times Sp(2;\R)]$}
	& \begin{tabular}{l} $(6,4)$, $(4n,4n)$ \end{tabular}
		\\
\hline
{$[Sp(n-a+b,2-b+a)\times Sp(b,2-b)]/[Sp(n-a,a)\times Sp(b,2-b)]$}
	& \begin{tabular}{l} $(8b-4b^2,4b^2-8b+10)$
\\
	$(8n+4(a-n)b+4a(b-2),4(n-a)b+4a(2-b))$ \end{tabular}
	 \\
\hline
\hline

\multicolumn{2}{|c|} {\footnotesize \bf (15) Real Form Family of $[SO(n)\times SO(n+1)]/SO(n)$}\\
\hline
{$[SO(n;\C)\times SO(n+1;\C)]/SO(n,\C)$} & \begin{tabular}{l} $\bigl(\frac{n(n-1)}{2},\frac{n(n-1)}{2}\bigr)$, $(n,n)$ \end{tabular}
	\\
\hline
{$[SO(n-a,a)\times SO(n-a,a+1)]/SO(n-a,a)$}
	& \begin{tabular}{l} $\bigl((n-a)a,\frac{n(n-1)}{2}-(n-a)a\bigr)$, $(n-a,a)$ \end{tabular}\\
\hline
{$[SO(n-a,a)\times SO(n-a+1,a)]/SO(n-a,a)$}
	& \begin{tabular}{l} $\bigl((n-a)a,\frac{n(n-1)}{2}-(n-a)a\bigr)$, $(a,n-a)$ \end{tabular}
	 \\
\hline
\hline
\multicolumn{2}{|c|} {\footnotesize \bf (16) Real Form Family of $[SU(n+2)\times Sp(m+1)]/[U(n)\times SU(2)\times Sp( m)]$}\\
\hline
{$[SL(n+2;\C)\times Sp(m+1;\C)]/[GL(n;\C)\times SL(2;\C)\times Sp(m;\C)]$} & \begin{tabular}{l} $(3,3)$, $(4n,4n)$, $(4m,4m)$ \end{tabular}
	\\
\hline
{$[SL(n+2;\R)\times Sp(m+1;\R)]/[GL(n;\R)\times SL(2;\R)\times Sp(m;\R)]$}
	&\begin{tabular}{l} $(2,1)$, $(2n,2n)$, $(2m,2m)$ \end{tabular}\\
\hline
{$[SL(n'+1;\H)\times Sp(m-a,1+a)]/[GL(n';\H)\times SU(2)\times Sp(m-a,a)]$}
	& \begin{tabular}{l} $(0,3)$, $(2n,2n)$, $(4m-4a,4a)$ \end{tabular}
	 \\
\hline
{$[SL(n'+1;\H)\times Sp(m-a+1,a)]/[GL(n';\H)\times SU(2)\times Sp(m-a,a)]$} &
	\begin{tabular}{l} $(0,3)$, $(2n,2n)$, $(4a,4m-4a)$ \end{tabular}\\
\hline
{\footnotesize $\tfrac{SU(n-a_1+b_1,2-b_1+a_1)\times Sp(m-a_2,1+a_2)}{U(n-a_1,a_1)\times SU(2)\times Sp(m-a_2,a_2)}$}
	& \begin{tabular}{l}
$(0,3)$ \\
$(4(n-a_1)-2b_1(n-2a_1),2b_1(n-2a_1)+4a_1)$
\\
	$(4m-4a_2,4a_2)$ \end{tabular}\\
\hline
{\footnotesize $\tfrac{SU(n-a_1+b_1,2-b_1+a_1)\times Sp(m-a_2+1,a_2)}{U(n-a_1,a_1)\times SU(2)\times Sp(m-a_2,a_2)}$}
	& \begin{tabular}{l}
$(0,3)$ \\
$(4(n-a_1)-2b_1(n-2a_1),2b_1(n-2a_1)+4a_1)$
\\
	$(4a_2,4m-4a_2)$ \end{tabular}
	 \\
\hline
{$[SU(n+1-a,a+1)\times Sp(m+1;\R)]/[U(n-a,a)\times SU(1,1)\times Sp(m;\R)]$}
	 & \begin{tabular}{l} $(2,1)$, $(2n,2n)$, $(2m,2m)$ \end{tabular} \\
\hline
{$[SL(4;\R)\times Sp(m+1;\C)]/[SL(2;\C)\times Sp(m;\C) \times T]$}
	&\begin{tabular}{l} $(3,3)$, $(6,2)$, $(4m,4m)$ \end{tabular}\\
\hline
{$[SU^*(4)\times Sp(m+1;\C)]/[SL(2;\C)\times Sp(m;\C)\times T]$}
	& \begin{tabular}{l} $(3,3)$, $(2,6)$, $(4m,4m)$ \end{tabular}
	 \\
\hline
{$[SU(2,2)\times Sp(m+1;\C)]/[SL(2;\C)\times Sp(m;\C) \times \R]$} & \begin{tabular}{l} $(3,3)$, $(4,4)$, $(4m,4m)$ \end{tabular}
	\\
\hline
\hline
\multicolumn{2}{|c|} {\footnotesize \bf (17) Real Form Family of $[SU(n+2)\times Sp(m+1)]/[SU(n)\times SU(2)\times Sp( m)]$}\\
\hline
{$[SL(n+2;\C)\times Sp(m+1;\C)]/[SL(n;\C)\times SL(2;\C)\times Sp(m;\C)]$} & \begin{tabular}{l} $(3,3)$, $(1,0)$, $(0,1)$, $(4n,4n)$, $(4m,4m)$ \end{tabular}
	\\
\hline
{$[SL(n+2;\R)\times Sp(m+1;\R)]/[SL(n;\R)\times SL(2;\R)\times Sp(m;\R)]$}
	&\begin{tabular}{l} $(2,1)$, $(1,0)$, $(2n,2n)$, $(2m,2m)$ \end{tabular}\\
\hline
{$[SL(n'+1;\H)\times Sp(m-a,1+a)]/[SL(n';\H)\times SU(2)\times Sp(m-a,a)]$}
	& \begin{tabular}{l} $(0,3)$, $(1,0)$, $(2n,2n)$, $(4m-4a,4a)$ \end{tabular}
	 \\
\hline
{$[SL(n'+1;\H)\times Sp(m-a+1,a)]/[SL(n';\H)\times SU(2)\times Sp(m-a,a)]$} &
	\begin{tabular}{l} $(0,3)$, $(1,0)$, $(2n,2n)$, $(4a,4m-4a)$ \end{tabular}\\
\hline
{\footnotesize $\tfrac{SU(n-a_1+b_1,2-b_1+a_1)\times Sp(m-a_2,1+a_2)}{SU(n-a_1,a_1)\times SU(2)\times Sp(m-a_2,a_2)}$}
	& \begin{tabular}{l}
$(0,3)$, $(0,1)$ \\
$(4(n-a_1)-2b_1(n-2a_1),2b_1(n-2a_1)+4a_1)$
\\
	$(4m-4a_2,4a_2)$ \end{tabular}\\
\hline
{\footnotesize $\tfrac{SU(n-a_1+b_1,2-b_1+a_1)\times Sp(m-a_2+1,a_2)}{SU(n-a_1,a_1)\times SU(2)\times Sp(m-a_2,a_2)}$}
	& \begin{tabular}{l}
$(0,3)$, $(0,1)$ \\
$(4(n-a_1)-2b_1(n-2a_1),2b_1(n-2a_1)+4a_1)$
\\
	$(4a_2,4m-4a_2)$ \end{tabular}
	 \\
\hline
{$[SU(n+1-a,a+1)\times Sp(m+1;\R)]/[SU(n-a,a)\times SU(1,1)\times Sp(m;\R)]$}
	 & \begin{tabular}{l} $(2,1)$, $(0,1)$, $(2n,2n)$, $(2m,2m)$ \end{tabular} \\
\hline
{$[SL(4;\R)\times Sp(m+1;\C)]/[SL(2;\C)\times Sp(m;\C)]$}
	&\begin{tabular}{l} $(3,3)$, $(0,1)$, $(6,2)$, $(4m,4m)$ \end{tabular}\\
\hline
{$[SU^*(4)\times Sp(m+1;\C)]/[SL(2;\C)\times Sp(m;\C)]$}
	& \begin{tabular}{l} $(3,3)$, $(0,1)$ $(2,6)$, $(4m,4m)$ \end{tabular}
	 \\
\hline
{$[SU(2,2)\times Sp(m+1;\C)]/[SL(2;\C)\times Sp(m;\C)]$} & \begin{tabular}{l} $(3,3)$, $(1,0)$, $(4,4)$, $(4m,4m)$ \end{tabular}
	\\
\hline
\hline
\multicolumn{2}{|c|} {\footnotesize \bf (18) Real Form Family of $[Sp(n+1)\times Sp(m+1)]/[Sp(n)\times Sp(m)\times Sp(1)]$}\\
\hline
{$[Sp(n+1;\C)\times Sp(m+1;\C)]/[Sp(n;\C)\times Sp(m;\C) \times Sp(1;\C)]$} & \begin{tabular}{l} $(3,3)$, $(4n,4n)$, $(4m,4m)$ \end{tabular}
	\\
\hline
{$[Sp(n+1;\R)\times Sp(m+1;\R)]/[Sp(n;\R)\times Sp(m;\R) \times Sp(1;\R)]$}
	&  \begin{tabular}{l} $(2,1)$, $(2n,2n)$, $(2m,2m)$ \end{tabular}
		\\
\hline
{\footnotesize $\tfrac{Sp(n-a_1+b_1,1-b_1+a_1)\times Sp(m-a_2+b_2,1-b_2+a_2)}{\{Sp(n-a_1,a_1) \times Sp(m-a_2,a_2)\times Sp(1)}$}
	& \begin{tabular}{l} $(0,3)$\\
$(4(n-a_1)-4b_1(n-2a_1),4a_1+4b_1(n-2a_1))$
\\
	$(4(m-a_2)-4b_2(m-2a_2),4a_2+4b_2(m-2a_2))$ \end{tabular}
	 \\
\hline
{$Sp(n+1;\C)/[Sp(n;\C)\times Sp(1)]$ where $m = n$} & \begin{tabular}{l} $(3,0)$, $(4n,4n)$ \end{tabular}\\
\hline
{$Sp(n+1;\C)/[Sp(n;\C)\times Sp(1;\R)]$ where $m = n$} & \begin{tabular}{l} $(1,2)$, $(4n,4n)$ \end{tabular}\\
\hline
\hline
\multicolumn{2}{|c|} {\footnotesize \bf (19) Real Form Family of $[Sp(n+1)\times Sp(\ell+1)\times Sp(m+1)]/[Sp(n)\times Sp(\ell)\times Sp(m)\times Sp(1)]$}\\
\hline
{\footnotesize $\tfrac{Sp(n+1;\C)\times Sp(l+1;\C)\times Sp(m+1;\C)}{Sp(n;\C)\times Sp(l;\C)\times Sp(m;\C) \times Sp(1;\C)}$} &
 \begin{tabular}{l} $(3,3)$, $(3,3)$, $(4n,4n)$, $(4l,4l)$, $(4m,4m)$ \end{tabular}
	\\
\hline
{\footnotesize $\tfrac{Sp(n+1;\R)\times Sp(l+1;\R)\times Sp(m+1;\R)}{Sp(n;\R)\times Sp(l;\R)\times Sp(m;\R) \times Sp(1;\R)}$}
	& \begin{tabular}{l} $(2,1)$, $(2,1)$, $(2n,2n)$, $(2l,2l)$, $(2m,2m)$ \end{tabular}
		\\
\hline
{\footnotesize $\tfrac{Sp(n-a_1+b_1,1-b_1+a_1)\times Sp(l-a_2+b_2,1-b_2+a_2)\times Sp(m-a_3+b_3,1-b_3+a_3)}{Sp(n-a_1,a_1)\times Sp(l-a_2,a_2) \times Sp(m-a_3,a_3)\times Sp(1)}$}
	& \begin{tabular}{l} $(0,3)$, $(0,3)$, \\
$(4(n-a_1)-4b_1(n-2a_1),4a_1+4b_1(n-2a_1))$
\\
	$(4(l-a_2)-4b_2(l-2a_2),4a_2+4b_2(l-2a_2))$ \\
$(4(m-a_3)-4b_3(m-2a_3),4a_3+4b_3(m-2a_3))$
\end{tabular}
	 \\
\hline
{$[Sp(n+1;\C) \times Sp(m+1;\R)]
	/[Sp(n;\C)\times Sp(1;\R)\times Sp(m;\R)]$ where $n = \ell$} &
    \begin{tabular}{l} $(1,2)$, $(2,1)$, $(4n,4n)$, $(2m,2m)$ \end{tabular} \\
\hline
{$[Sp(n+1;\C) \times Sp(m+1-a,a)]/
	[Sp(n;\C)\times Sp(1)\times Sp(m-a,a)]\,, n=\ell$} &  
	\begin{tabular}{l} $(3,0)$, $(0,3)$, $(4n,4n)$, $(4a,4m-4a)$ 
	\end{tabular} \\
\hline
{$[Sp(n+1;\C) \times Sp(m-a,a+1)]/
	[Sp(n;\C)\times Sp(1)\times Sp(m-a,a)]\,, n=\ell$} & 
	\begin{tabular}{l} $(3,0)$, $(0,3)$, $(4n,4n)$, $(4m-4a,4a)$ 
	\end{tabular} \\
\hline
\hline
\multicolumn{2}{|c|} {\footnotesize \bf (20) Real Form Family of $[Sp(n+1)\times Sp(2)\times Sp(m+1)]/[Sp(n)\times Sp(1)\times Sp(1)\times Sp(m)]$}\\
\hline
{\footnotesize $\tfrac{Sp(n+1;\C)\times Sp(2;\C)\times Sp(m+1;\C)}{Sp(n;\C)\times Sp(l;\C)\times Sp(1;\C) \times Sp(m;\C)}$} &
\begin{tabular}{l}  $(3,3)$, $(3,3)$, $(4n,4n)$, $(4,4)$, $(4m,4m)$ \end{tabular}
	\\
\hline
{\footnotesize $\tfrac{Sp(n+1;\R)\times Sp(2;\R)\times Sp(m+1;\R)}{Sp(n;\R)\times Sp(l;\R)\times Sp(1;\R) \times Sp(m;\R)}$}
	&\begin{tabular}{l}  $(2,1)$, $(2,1)$, $(2n,2n)$, $(2,2)$, $(2m,2m)$ \end{tabular}
		\\
\hline
{\footnotesize $\tfrac{Sp(n-a_1+b_1,1-b_1+a_1)\times Sp(1,1)\times Sp(m-a_2+b_2,1-b_2+a_2)}{Sp(n-a_1,a_1)\times Sp(1) \times Sp(1) \times Sp(m-a_2,a_2)}$}
	& \begin{tabular}{l} $(0,3)$, $(0,3)$, $(4,0)$ \\
$(4(n-a_1)-4b_1(n-2a_1),4a_1+4b_1(n-2a_1))$
\\
	$(4(m-a_2)-4b_2(m-2a_2),4a_2+4b_2(m-2a_2))$
\end{tabular}
	 \\
\hline
{\footnotesize $\tfrac{Sp(n-a_1+b_1,1-b_1+a_1)\times Sp(2)\times Sp(m-a_2+b_2,1-b_2+a_2)}{Sp(n-a_1,a_1)\times Sp(1) \times Sp(1) \times Sp(m-a_2,a_2)}$}
	& \begin{tabular}{l} $(0,3)$, $(0,3)$, $(0,4)$ \\
$(4(n-a_1)-4b_1(n-2a_1),4a_1+4b_1(n-2a_1))$
\\
	$(4(m-a_2)-4b_2(m-2a_2),4a_2+4b_a2(m-2a_2))$
\end{tabular}
	 \\
\hline
{$[Sp(n+1;\C)\times Sp(2;\R)]/[Sp(n;\C)\times Sp(1;\C)]
	\text{ where } m = n$} & \begin{tabular}{l} $(3,3)$, $(4n,4n)$, $(3,1)$ \end{tabular}
\\
\hline
{$[Sp(n+1;\C)\times Sp(1,1)]/[Sp(n;\C)\times Sp(1;\C)]
	\text{ where } m = n$} & \begin{tabular}{l} $(3,3)$, $(4n,4n)$, $(1,3)$ \end{tabular} \\
\hline
\end{longtable}
}

\centerline{
{\footnotesize
\fbox{\setlength{\unitlength}{.50 mm}
\begin{picture}(240,28)
\put(0,12){(21)}
\put(38,20){$\gg_1 \ \ \ \ \dots \ \ \ \ \gg_n$}
\put(38,5){$\gh'_1 \ \ \ \ \dots \ \ \ \ \gh'_n$}
\put(40,10){\line(0,1){8}}
\put(52,12){$\dots$}
\put(70,10){\line(0,1){8}}
\put(20,5){$\gz_\gh$}
\put(25,10){\line(3,2){10}}
\put(26,6){\line(3,1){40}}
\put(100,10){\{$(G_{u,1}/H'_{u,1})\times \dots \times
	(G_{u,n}/H'_{u,n})\}/diag(Z_{H_u})$}
\end{picture} \\
}
}
}
\smallskip

Case (21) requires some discussion.
To pass to the group level we assume that the semisimple groups
$G_{u,i}$ are compact and simply connected, so $G_u = \prod G_{u,i}$
also is compact and simply connected, that the
central subgroup $Z_\gh$ of $H$ is connected, and that $H_u$ is
connected.  Thus $M_u = G_u/H_u$ is simply connected
Let $H_{u,i}$ be the projection of $H_u$ to $G_{u,i}$, say
$H_{u,i} = H'_{u,i}Z_{u,i}$ where $Z_{u,i} = p_i(Z_\gh)$ is the
projection of $Z_\gh$ to $H_u$\,.  Then $M_{u,i} = G_{u,i}/H_{u,i}$ is
weakly symmetric with non--semisimple isotropy $H_{u,i}$\,.  Further,
each $M_{u,i}$ is symmetric, or is the complexification of $M_{u,i}$, or is 
one of the spaces of Cases (1) 
through (20) of Tables 3.6 and 4.12.  Combining these requirements, each
$M_{u,i} = G_{u,i}/H_{u,i}$ is one of the following:
\begin{itemize}
   \item a compact irreducible hermitian symmetric space, or
   \item one of the spaces of cases (5), (8), (11) or (12) in Table 3.6, or
   \item one of the spaces of cases (13) or (16) in Table 4.12.
\end{itemize}
Thus either $M_i = G_i/H_i$ is on Berger's list of pseudo--riemannian symmetric 
spaces, or it is listed under Case (5), (8), (11) or (12) in Table 3.6, or
it is listed under Case (13) or (16) in Table 4.12.

Let $M = G/H$ be a pseudo--riemannian weakly symmetric space with the same
complexification as $M_u = G_u/H_u$\,.  Then $M$ corresponds to an involutive
automorphism $\sigma$ of $\gg_u$ that preserves $\gh_u$\,.  It necessarily
preserves $\gz_\gh$ as well.  Now permute the simple factors 
$\gg_{u,i}$ of $\gg_u$ so that $\sigma$ exchanges $\gg_{u,2i-1}$ and
$\gg_{u,2i}$ for $2i \leqq s$ and preserves each $\gg_{u,i}$ for 
$s < i \leqq s+t$.  For $j = 2i \leqq s$ we then have $(\gg_{j,\C},\gh_{j,\C})$
corresponding to indices $(2i-1,i)$, and for $i > s$ we have $(\gg_i,\gh_i)$
where $\gg_i$ (resp. $\gh_i$, resp. $\gz_i)$ is a real form of $\gg_{u,i,\C}$
(resp $\gh_{u,i,\C}$, resp. $\gz_{u,i,\C})$.  It is implicit here that
$\sigma$ preserves the center $Z_{H_u}$ of $H_u$ so that $Z_{H_u}$ is a
subgroup of the center $\widetilde{Z_{H_u}} = \prod Z_{H_{u,i}}$ of
$\widetilde{H_u} = \prod H_{u,i}$.  Thus $H_u \subset \widetilde{H_u}$ 
and we have 
\begin{equation}\label{fibrations1}
\varphi_u: M_u = G_u/H_u \to G_u/\widetilde{H}_u = \widetilde{M}_u 
	\text{ by } gH_u \mapsto g\widetilde{H}_u
	\text{ where } \widetilde{M}_u = \prod M_{u,i} \text{ and } 
	\widetilde{H}_u = \prod H_{u,i}\,.
\end{equation}
Since everything is $\sigma$--stable here, $H \subset \widetilde{H}$
where $\widetilde{H} = \prod H_i$ and we have a well defined projection
\begin{equation}\label{fibrations2}
\varphi: M = G/H \to G/\widetilde{H} = \widetilde{M} 
        \text{ by } gH \mapsto g\widetilde{H}
        \text{ where } \widetilde{M} = \prod M_i \text{ and } 
        \widetilde{H} = \prod H_i\,.
\end{equation}

Conversely, let $M_i = G_i/H_i$ be irreducible weakly symmetric 
pseudo--riemannian
manifolds, not all symmetric, where each $G_i$ is semisimple but each $H_i$ has
center $Z_i$ of dimension $1$.  Thus each $Z_i^0$ is a circle group or the
multiplicative group of positive reals or (if $G_i$ is complex) the 
multiplicative group $\C^*$, and each $H_i = H'_iZ_i^0$ with $H'_i$ 
semisimple.  Suppose that $G = G_1 \times\dots \times G_\ell$
has a Cartan involution $\theta$ that preserves each $G_i$\,, each $H_i$\,
and thus each $Z_i^0$\,. Then we have the compact real forms
$$
G_u = \prod G_{u,i}\,\,\,\,, \widetilde{H_u} =  \prod H_{u,i}\,\,\,\,,
\widetilde{Z_u^0}=\prod Z_{u,i}^0\,\,\,\,, \text{ and } \,\,\,
	\widetilde{M}_u = \prod M_{u,i}\,
$$
where $M_{u,i} = G_{u,i}/H_{u,i}$ and $Z_{u,i}$ is the center of $H_{u,i}$\,.
Consider the set $\cS$ of all closed connected $\theta$--invariant subgroups 
$S_u \subset \widetilde{Z_u}$
such that the projections $S_u \to Z_{u,i}^0$ all are surjective.  The set
$\cS$ is nonempty -- it contains $\widetilde{Z_u^0}$ -- so it has 
elements of minimal dimension.  Let $Z_u$ denote one
of them and define $H = (\prod H'_i)Z_u$\,.  Then $M = G/H$ belongs to the
real form family of Case (21).  This constructs every element in that 
real form family.
\smallskip

Following \cite[Proposition 12.8.4]{W2007} and Tables 3.6 and 4.12, 
the metric signature of the weakly symmetric pseudo--riemannian 
manifold $G/H$ in the real form family of Case (21) is given as
follows.  First, we have the metric irreducible subspaces $S_{i,j}$ of
the real tangent space of $G_i/H_i$, and their signatures $(a_{i,j},b_{i,j})$.
That gives us the metric irreducible subspaces, with their signatures, for
$G/\widetilde{H}$.  To this collection we add the metric irreducible subspaces
of the fiber $\widetilde{\gh}/\gh$
of $\widetilde{\gh} \to \gh$ implicit in (\ref{fibrations2}).

\section{Special Signatures: Riemannian, Lorentz, and Trans--Lorentz.}
\label{sec5}
\setcounter{equation}{0}

We go through Berger's classification \cite{B1957} and our
Tables 3.6 and 4.12 to pick out the cases where $M = G/K$
can have an invariant weakly symmetric pseudo--riemannian metric of 
signature $(n,0)$, $(n-1,1)$ or $(n-2,2)$.  Of course this gives the 
classification of the weakly symmetric pseudo--riemannian manifolds
of those signatures with $G$ semisimple and $H$ reductive in $G$; they
are certain products $G/H = \prod G_i/H_i$ 
from Berger \cite{B1957} for the pseudo--riemannian symmetric cases 
and from Tables 3.6 and 4.12 for the 
nonsymmetric pseudo--riemannian weakly symmetric cases.

We will refer to $(n,0)$, $(n-1,1)$ and $(n-2,2)$ as {\em special signatures}.
Now we run through the cases of Table 3.6, then the cases of Table 4.12,
and finally the symmetric cases from \cite{B1957}.
\smallskip

\centerline{\bf From Table 3.6.}

{\bf Case (1):} Since $m>n\geqq 1$ we know $mn\geqq 2$. Then, of the first 
three cases, only $SL(3;\R)/SL(2;\R)$ can have special signature; 
it is $(3,2)$.  

For the fourth case of Case (1), 
$\tfrac{SU(m-k+\ell,n-\ell+k)}{SU(m-k,k)\times SU(\ell,n-\ell)}$, both
$2m\ell+2nk-4k\ell=2(m-k)\ell+2(n-\ell)k$ and 
$2mn-2m\ell-2nk+4k\ell=2(m-k)(n-\ell)+2k\ell$ are even, so it is enough to 
see when one of them is $0$ or $2$.
If $2(m-k)\ell+2(n-\ell)k=0$, then $2(m-k)\ell=0$ and $2(n-\ell)k=0$, so 
$\ell = 0$ or $k = m$, and $k = 0$ or $\ell = n$.  
If $k = \ell = 0$, or if $k = m$ and $\ell = n$, then the metric irreducibles 
have signatures $(2mn,0)$ and $(0,1)$; the other two cases of $(k,\ell)$
trivialize $M$.  That leaves us with $\tfrac{SU(m,n)}{SU(m)\times SU(n)}$,
which has invariant metrics of signatures $(2mn+1,0)$ and $(2mn,1)$.

If $2(m-k)\ell+2(n-\ell)k=2$, then $2(m-k)\ell=0$ and 
$2(n-\ell)k=2$, or $2(m-k)\ell=2$ and $2(n-\ell)k=0$. 
If $2(m-k)\ell = 2$ then $(m-k)\ell = 1$, and either $n = \ell$ or $k = 0$;
if $k = 0$ then $m = \ell = 1$ and we have 
$\tfrac{SU(2,n-1)}{SU(1)\times SU(1,n-1)}$
if $n = \ell$ then $(m-k) = \ell = 1$ and we have
$\tfrac{SU(2,k)}{SU(1,k)\times SU(1)}$\,.

Since $m\geqq 2$, we 
then have $k=n=1$ and $\ell=0$, or $k=m-1$ and $\ell=n=1$; then
$SU(m-1,2)/SU(m-1,1)$ has invariant metric
of signature $(2m-1,2)$. If $2(m-k)(n-\ell)+2k\ell=0$, then $k=0$ and 
$\ell=n$, or $\ell=0$ and $k=m$. As expected this shows 
that $SU(m+n)/SU(m)\times SU(n)$ 
has metrics of signatures $(2mn+1,0)$ and $(2mn,1)$. 
If $2(m-k)(n-\ell)+2k\ell=2$, then $k=\ell=n=1$, or $\ell=0,\, k=m-1,\,n=1$. 
Then $SU(m,1)/SU(m-1,1)$ has a metric of signature $(2m-1,2)$.  Summarizing,
$$
\begin{aligned}
&SL(3;\R)/SL(2;\R):\,\, (3,2) \\
&SU(m+n)/[SU(m)\times SU(n)]:\,\, (2mn+1,0),\,\, (2mn,1) \\
&SU(m,n)/[SU(m)\times SU(n)]:\,\, (2mn+1,0),\,\, (2mn,1) \\
&SU(n-1,2)/SU(n-1,1):\,\, (2n-1,2) \\
&SU(n,1)/SU(n-1,1):\,\, (2n-1,2)
\end{aligned}
$$

{\bf Case (2):}  Here $n$ is odd and $\geqq 5$ by (\ref{kraemer-classn}).  
The first and fourth cases are excluded because 
$\tfrac{1}{2}n(n-1) \leqq 2$ would 
give $n < 3$, so we only need to discuss the second and third cases.  There 
$(k(k-1)+\ell(\ell-1),2k\ell)$ is the signature.  Since
$k(k-1)+\ell(\ell-1) \geqq \tfrac{1}{2}(k+\ell)^2 - n = \tfrac{n^2}{2}-n > 2$
we are reduced to considering $2k\ell \leqq 2$.
if $k = 0$ then $\ell = n$, $G/H$ is $SO^*(2n)/SU(n)$ or
$SO(2n)/SU(n)$, $n$ odd,  and the possible signatures are $(n(n-1)+1,0)$
and $(n(n-1),1)$.  It is the same for $\ell = 0$.  Now we may suppose 
$k\ell > 0$; so $k = \ell = 1$ because $2k\ell \leqq 2$.  So $n = k+\ell = 2$.  
But $n$ is odd.  Summarizing, we have
$$
SO^*(2n)/SU(n),\,\, SO(2n)/SU(n):\,\, (n(n-1)+1,0),\,\, (n(n-1),1) 
$$

{\bf Case (3):} From Table 3.6, the spaces $E_6/Spin(10) \text{ and } 
E_{6,D_5T_1}/Spin(10)$ have invariant metrics of special signatures
only for signatures $(33,0)$ and $(32,1)$.

{\bf Case (4):} We may assume $n \geqq 2$, so the  first three cases of Case (4)
are excluded.  For the fourth, 
$\tfrac{SU(2n+1-2\ell,2\ell)}{Sp(n-\ell,\ell)}$, we need $\ell = n$ or
$\ell = 0$, leading to signatures $(2n^2+3n-1,1)$ and $(2n^2+3n,0)$.
Summarizing,
$$
\tfrac{SU(2n+1)}{Sp(n)},\,\, \tfrac{SU(2n,1)}{Sp(n)}:\,\,
	(2n^2+3n-1,1),\,\, (2n^2+3n,0).
$$

{\bf Case (5):}  As above, $n\geqq 2$, and that excludes the first three cases
of Case (5).  For the fourth, 
$\tfrac{SU(2n+1-2\ell,2\ell)}{Sp(n-\ell,\ell)\times U(1)}$, we need
$\ell = n$ or $\ell = 0$, leading to special signature $(2n^2+3n-1,0)$.  
Summarizing,
$$
SU(2n+1)/[Sp(n)\times U(1)], \,\, 
SU(2n,1)/[Sp(n)\times U(1)] :\,\, (2n^2+3n-1,0).
$$

{\bf Case (6):} The space $Spin(7)/G_2$ has an invariant  metric of 
special signature $(7,0)$.

{\bf Case (7):} The space $G_2/SU(3)$ has an invariant  metric of 
special signature $(6,0)$, and the space $G_{2,A_1A_1}/SU(1,2)$ has an 
invariant metric of special signature $(4,2)$.

{\bf Case (8):} The spaces $SO(10)/[Spin(7)\times SO(2)]$ and 
$SO(8,2)/[Spin(7)\times SO(2)]$ each has an invariant metric of special 
signature $(23,0)$.

{\bf Case (9):} The spaces $SO(9)/Spin(7)$ and $SO(8,1)/Spin(7)$ have 
invariant metrics of signatures $(15,0)$.

{\bf Case (10):} The spaces $Spin(8)/G_2$ and $Spin(7,1)/G_2$ have 
invariant metrics of signatures $(14,0)$.


{\bf Case (11):} Here $n\geqq 2$.  That excludes the first and fourth
cases of Case (11).  For the second case, if $k=0$ or $k=n$, the signatures 
of the real tangent space irreducibles are $(2n,0)$ and $(n^2-n,0)$, so the
spaces $SO(2n+1)/U(n)\text{ and } SO(2n,1)/U(n)$ have invariant metrics
of special signature $(n^2+n,0)$; and $SO(5)/U(2)\text{ and } SO(4,1)/U(2)$ 
have invariant metrics of special
signature $(4,2)$. If $k=1$ or $k=n-1$, the signatures of the irreducibles
are $(2,2n-2)$ and $(2n-2,(n-1)(n-2))$, leading to $n = 2$ where
$SO(3,2)/U(1,1)$ has metrics of special
signature $(4,2)$.   If $1 < k < n-1$ there is no invariant metric of
special signature.  Summarizing,
$$
\begin{aligned}
& SO(2n+1)/U(n),\,\, SO(2n,1)/U(n):\,\, (n^2+n,0) \\
&SO(5)/U(2),\,\, SO(4,1)/U(2),\,\, SO(3,2)/U(1,1):\,\, (4,2)
\end{aligned}
$$

{\bf Case (12):} Here $n \geqq 3$.  That excludes the first and fourth
cases of Case (12).  In the second and third cases we exclude the range
$0 < k < n-1$ where $4k, 4n-4k-4 \geqq 4$.  There, for $k = 0$ and
$k = n-1$ we note that 
$Sp(n)/[Sp(n-1)\times U(1)]\text{ and } Sp(n-1,1)/[Sp(n-1)\times U(1)]$
have metrics of special signatures $(4n-2,0)$ and $(4n-4,2)$.
\smallskip

\centerline{\bf From Table 4.12.}

{\bf Case (13):} Here $n\geqq 2$.  That excludes the first and second cases 
of Case (13).  It also excludes the possibility $k\ell \ne 0$ in the third 
and fourth cases.  That leaves 
$G/H = \tfrac{SU(n)\times SU(n+1)}{SU(n)\times U(1)}$ and
$G/H = \tfrac{SU(n)\times SU(n,1)}{SU(n)\times U(1)}$, which have
invariant metrics of special signature $(n^2+2n-1,0)$.

{\bf Case (14):} Here $n\geqq 1$.  That excludes the first two 
cases of Case (14).  In the third case, $8b - 4b^2$ implies $b \leqq 2$,
and $b = 1$ is excluded because of a metric irreducible $(4,6)$.
For $b=0$ and $b = 2$, the signatures of the metric irreducibles are 
$(0,10)$ and $(8n-8a,8a)$, so $a=0$ or $a=n$, leading to 
$$
[Sp(n,2)\times Sp(2)]/[Sp(n)\times Sp(2)]\text{ and } 
	[Sp(n+2)\times Sp(2)]/[Sp(n)\times Sp(2)]
$$ 
with invariant metric of special signature $(8n+10,0)$. 

{\bf Case (15):}  Here $n\geqq 3$ since $G$ is semisimple.  That excludes
the first case of Case (15).  In the second and third cases $a = 0$ and
$a = n$ lead to $[SO(n)\times SO(n,1)]/SO(n) \text{ and } 
[SO(n)\times SO(n+1)]/SO(n)$ with invariant metric of special signature
$(\frac{n(n+1)}{2},0)$, and the cases $a=1$ and $a=n-1$ lead only to
$[SO(2,1)\times SO(2,2)]/SO(2,1)\text{ and } 
[SO(2,1)\times SO(3,1)]/SO(2,1)$ with invariant metric of special signature
$(4,2)$.  The cases $1 < a < n-1$ do not lead to special signature.

{\bf Case (16):} Here $n+m\geqq 1$.  The first, second, seventh, eighth,
ninth and tenth cases of Case (16) are excluded at a glance, reducing the 
discussion to the third, fourth, fifth and sixth cases.  The third and
fourth require $n = 0$ and then further require $a = 0$ or $a = m$, leading to
$[Sp(1)\times Sp(m,1)]/[Sp(m)\times Sp(m)]$ and 
$[Sp(1)\times Sp(m+1)]/[Sp(m)\times Sp(m)]$
with invariant metrics of special signature $(3+4m,0)$.  These are in
fact included in the fifth and sixth cases.  For the fifth and sixth cases,
we must have $a_1=0$ or $a_1=n$, and $a_2=0$ or $a_2=m$. Then we
arrive at the spaces 
$$
\tfrac{SU(n+2)\times Sp(m+1)}{U(n)\times SU(2)\times Sp(m)},\,\,
\tfrac{SU(n,2)\times Sp(m+1)}{U(n)\times SU(2)\times Sp(m)},\,\,
\tfrac{SU(n+2)\times Sp(m,1)}{U(n)\times SU(2)\times Sp(m)},\,\,
\tfrac{SU(n,2)\times Sp(m,1)}{U(n)\times SU(2)\times Sp(m)},
$$ 
which have invariant metrics of special signature $(4n+4m+3,0)$. 

{\bf Case (17):} This essentially is a simplification of Case (16).  By
the considerations there, we have that the spaces 
$$
\tfrac{SU(n+2)\times Sp(m+1)}{SU(n)\times SU(2)\times Sp(m)},\,\,
\tfrac{SU(n,2)\times Sp(m+1)}{SU(n)\times SU(2)\times Sp(m)},\,\, 
\tfrac{SU(n+2)\times Sp(m,1)}{SU(n)\times SU(2)\times Sp(m)},\,\, 
\tfrac{SU(n,2)\times Sp(m,1)}{SU(n)\times SU(2)\times Sp(m)}
$$ 
have invariant metrics of special signatures $(4n+4m+4,0)$ and $(4n+4m+3,1)$.

{\bf Case (18):}  Here $n+m\geq 1$. The first, second, fourth and fifth 
cases of Case (18) are excluded at a glance, so we only need to consider 
the third case. There, the signatures of the irreducibles are $(0,3)$, 
$(4(n-a_1)-4b_1(n-2a_1),4a_1+4b_1(n-2a_1))$ and 
$(4(m-a_2)-4b_2(m-2a_2),4a_2+4b_2(m-2a_2))$, 
where $b_1,b_2=\{0,1\}$.  Thus we must have $a_1=0$ or $a_1=n$, and 
$a_2=0$ or $a_2=m$. That brings us to the spaces 
$$
\tfrac{Sp(n+1)\times Sp(m+1)}{Sp(n)\times Sp(1)\times Sp(m)},\,\,
\tfrac{Sp(n,1)\times Sp(m+1)}{Sp(n)\times Sp(1)\times Sp(m)},\,\,
\tfrac{Sp(n,1)\times Sp(m,1)}{Sp(n)\times Sp(1)\times Sp(m)},\,\,
\tfrac{Sp(n+1)\times Sp(m,1)}{Sp(n)\times Sp(1)\times Sp(m)},
$$ 
which have invariant metrics of special signature $(4n+4m+3,0)$.

{\bf Case (19):} The first case of Case (19) is excluded at a 
glance.  Visibly, the second and fourth cases require $\ell=m=n=0$, where
$G/H$ is $[Sp(1;\R)\times Sp(1;\R)\times Sp(1;\R)]/Sp(1;\R)$  or
$[Sp(1;\C)\times Sp(1;\R)]/Sp(1;\R)$; they have
invariant metrics of special signature $(4,2)$.   The fifth and sixth
cases require $n = \ell = 0$ and $a = 0$ or $a = m$, leading to
$$
[Sp(1;\C) \times Sp(m+1)]/[Sp(1)\times Sp(m)] \text{ and } 
[Sp(1;\C) \times Sp(m,1)]/[Sp(1)\times Sp(m)]
$$
which have invariant metrics of special signature $(4m+6,0)$.

For the third case, we must have $a_1=0$ or $a_1=n$, $a_2=0$ or $a_2=\ell$, 
and $a_3=0$ or $a_3=m$. In other words, $G/H$ must be one of 
$$\tfrac{Sp(n+1)\times Sp(\ell+1)\times Sp(m+1)}{Sp(n)\times Sp(\ell) \times Sp(m)\times Sp(1)},
\tfrac{Sp(n+1)\times Sp(\ell+1)\times Sp(m,1)}{Sp(n)\times Sp(\ell) \times Sp(m)\times Sp(1)},
\tfrac{Sp(n+1)\times Sp(\ell,1)\times Sp(m+1)}{Sp(n)\times Sp(\ell) \times Sp(m)\times Sp(1)},
\tfrac{Sp(n+1)\times Sp(\ell,1)\times Sp(m,1)}{Sp(n)\times Sp(\ell) \times Sp(m)\times Sp(1)},$$
$$\tfrac{Sp(n,1)\times Sp(\ell+1)\times Sp(m+1)}{Sp(n)\times Sp(\ell) \times Sp(m)\times Sp(1)},
\tfrac{Sp(n,1)\times Sp(\ell+1)\times Sp(m,1)}{Sp(n)\times Sp(\ell) \times Sp(m)\times Sp(1)},
\tfrac{Sp(n,1)\times Sp(\ell,1)\times Sp(m+1)}{Sp(n)\times Sp(\ell) \times Sp(m)\times Sp(1)},
\tfrac{Sp(n,1)\times Sp(\ell,1)\times Sp(m,1)}{Sp(n)\times Sp(\ell) \times Sp(m)\times Sp(1)},$$
These all have invariant metrics of special signature $(4n+4\ell+4m+6,0)$. 

{\bf Case (20):} The first, second, fifth and sixth cases are excluded
at a glance.  For the third and fourth cases, we must have $a_1=0$ or $a_1=n$, 
and $a_2=0$ or $a_2=m$. Then the spaces 
$$
\tfrac{Sp(n+1)\times Sp(1,1)\times Sp(m+1)}	
	{Sp(n)\times Sp(1)\times Sp(1)\times Sp(m)},\,\,
\tfrac{Sp(n,1)\times Sp(1,1)\times Sp(m+1)}
	{Sp(n)\times Sp(1)\times Sp(1)\times Sp(m)},\,\,
\tfrac{Sp(n,1)\times Sp(1,1)\times Sp(m,1)}
	{Sp(n)\times Sp(1)\times Sp(1)\times Sp(m)},\,\, 
\tfrac{Sp(n+1)\times Sp(1,1)\times Sp(m,1)}
	{Sp(n)\times Sp(1)\times Sp(1)\times Sp(m)}
$$ 
$$
\tfrac{Sp(n+1)\times Sp(2)\times Sp(m+1)}
	{Sp(n)\times Sp(1)\times Sp(1)\times Sp(m)},\,\,
\tfrac{Sp(n,1)\times Sp(2)\times Sp(m+1)}
	{Sp(n)\times Sp(1)\times Sp(1)\times Sp(m)},\,\,
\tfrac{Sp(n,1)\times Sp(2)\times Sp(m,1)}
	{Sp(n)\times Sp(1)\times Sp(1)\times Sp(m)},\,\,
\tfrac{Sp(n+1)\times Sp(2)\times Sp(m,1)}
	{Sp(n)\times Sp(1)\times Sp(1)\times Sp(m)}
$$ 
have invariant metrics of special signature $(4n+4m+10,0)$.

\smallskip

\centerline{\bf From Berger's \cite[Table II]{B1957}.}

The irreducible pseudo-riemannian symmetric spaces $G/H$ of \cite{B1957} fall
into two classes: the real form families for which $G_u$ is simple and those 
for the compact group manifolds $G_u = L_u \times L_u$ where $H_u$ is the 
diagonal $\delta L_u = \{(x,x)\mid x \in L_u\}$.
First consider the group manifolds. There the real tangent space of $G/H$ is
$\gm = \{(\xi,-\xi) \mid \xi \in \gl\}$ and the invariant pseudo-riemannian
metrics come from multiples of the Killing form of $\gl$.  
Thus $G/H =  (L\times L)/diag(L)$ has an invariant pseudo-riemannian metric
of special signature if and only if (i) the Killing form of $\gl$ 
is definite, or (ii) the Killing form
of $\gl$ has signature $\pm(\dim\gl -1,1)$, or (iii) the Killing form
of $\gl$ has signature $\pm(\dim\gl -2,2)$.  

The case (i) is the case where $G/H$ is a compact simple group manifold
with bi-invariant metric.
The cases (ii) and (iii) occurs only for the group manifold $SL(2;\R)$ (up 
to covering); that group manifold has bi-invariant metrics of signatures
$(2,1)$ and $(1,2)$.

For the moment we put the group manifold cases
aside and consider the cases where $G_u$ is simple.  Start with the compact
simple classical groups: $SU(n)$ for $n\geqq 2$, $Sp(n)$ for $n\geqq 2$, and 
$SO(n)$ for $n\geqq 7$.

For $G_u = SU(n)$, $n\geqq 2$, we have the following cases:
\begin{enumerate}
   \item $SL(n;\R)/SO(n)$ and $SU(n)/SO(n)$ with signature $(\frac{n^2+n}{2}-1,0)$.
   \item $SL(2;\C)/SO(2;\C)$ with signature $(2,2)$.
   \item $SL(2;\R)/\R$ with signature $(1,1)$.
   \item $SL(2;\C)/SL(2;\R)(\text{ or } SL(2;\C)/SU(1,1))$ with signature $(2,1)$.
   \item $SU^*(2n)/Sp(n)$ and $SU(2n)/Sp(n)$ with signature $(2n^2-n-1,0)$.
   \item $SL(2;\C)/SU^*(2)$ with signature $(3,0)$.
   \item $SU(m,n)/S(U(m)\times U(n))$ and $SU(m+n)/S(U(m)\times U(n))$ 
	with signature $(2mn,0)$.
   \item $SL(n;\C)/SU(n)$ with signature $(n^2-1,0)$.
   \item $SL(3;\R)/[SL(2;\R)\times R]$ with signature $(2,2)$.
   \item $SL(4;\R)/Sp(2;\R)$ with signature $(3,2)$.
   \item $SL(4;\R)/GL'(2;\C)$ with signature $(6,2)$.
   \item $SU^*(4)/Sp(1,1)$ with signature $(4,1)$.
   \item $SU^*(4)/GL'(2;\C)$ with signature $(6,2)$.
   \item $SU(2,1)/SO(2,1)$ with signature $(3,2)$.
   \item $SU(2,2)/Sp(2;\R)$ with signature $(3,2)$.
   \item $SU(2,2)/Sp(1,1)$ with signature $(4,1)$.
   \item We discuss the case 
	$SU(m-a+b,n-b+a)/S(U(m-a,a)\times U(n-b,b))$ 
	with signature $(2mn-2(m-a)b-2(n-b)a,2(m-a)b+2(n-b)a)=
	2(m-a)(n-b)+2ab,2(m-a)b+2(n-b)a)$. If $(m-a)(n-b)+ab=0$, or 
	$(m-a)b+(n-b)a=0$, we are in the riemannian  case (7) just above. If 
	$(m-a)(n-b)+ab=1$, or $(m-a)b+(n-b)a=1$, we have 
	$SU(n-1,2)/U(n-1,1)$ and 
	$SU(n,1)/U(n-1,1)$ with signature $(2n-2,2)$.
\end{enumerate}

For $G_u = SO(n)$, $n\geqq 7$, we have the following cases:
\begin{enumerate}
  \item $SO^*(2n)/U(n)$ and $SO(2n)/U(n)$ with 
signature $(n^2-n,0)$.
  \item $SO(m,n)/[SO(m)\times SO(n)]$ and $SO(m+n)/[SO(m)\times SO(n)]$ with 
signature $(mn,0)$.
   \item $SO(n;\C)/SO(n)$ with signature $(\frac{n^2-n}{2},0)$.
   \item  $SO(n-3,3)/[SO(n-3,1)\times SO(2)]$ and 
	$SO(n-1,1)/[SO(n-3,1)\times SO(2)]$ with signature $(2n-6,2)$.
   \item Finally (for $SO(n)$) we discuss the case 
$SO(m-a+b,n-b+a)/[SO(m-a,a)\times SO(n-b,b)]$ with signature
$(mn-(m-a)b-(n-b)a=(m-a)(n-b)+ab,(m-a)b+(n-b)a)$.  We need to see when one 
of the above two numbers in the signature is 0, 1, or 2.

If $(m-a)(n-b)+ab=0$, or $(m-a)b+(n-b)a=0$, we are in case (2) of $SO(n)$ 
just above.
If $(m-a)(n-b)+ab=1$ or $(m-a)b+(n-b)a=1$, we have $SO(n-1,2)/SO(n-1,1)$ and
$SO(n,1)/SO(n-1,1)$ with invariant metric of signature $(n-1,1)$.
The discussion for these cases is similar to case (17) for $SU(n)$
because the equations are the same.  

Now we consider the cases where $(m-a)(n-b)+ab=2$ or $(m-a)b+(n-b)a=2$.

First let $(m-a)b+(n-b)a=2$. If $(m-a)b = 1$ then $a=b=1$ and $m=n=2$,
contradicting our assumption $n \geqq 7$.  Thus either $(m-a)b=0$ and 
$(n-b)a=2$, or $(m-a)b=2$ and $(n-b)a=0$.  Then we have the following solutions:
\begin{alignat*}{2}
&(1)\,\, m=a=1,n=b+2\qquad &&(2)\,\, m=a=2,n=b+1 \\
&(3)\,\, b=0,n=1,a=2\qquad &&(4)\,\, b=0,n=2,a=1 \\
&(5)\,\, a=0,m=1,b=2\qquad &&(6)\,\, a=0,m=2,b=1 \\
&(7)\,\, n=b=1,m-a=2\qquad &&(8)\,\, n=b=2,m-a=1
\end{alignat*}
\end{enumerate}
We may assume $m \leqq n$.  As $m+n \geqq 7$ the solutions are (1), (2),
(5) and $6$.  That leads us to
$$
SO(n-1,3)/[SO(n-1,1)\times SO(2)]:\, (2n-2,2) \text{ and }
	SO(n-2,3)/SO(n-2,2):\, (n-2,2).
$$
A similar discussion of the case $(m-a)(n-b)+ab=2$ leads to
$$
SO(n+1,1)/[SO(n-1,1)\times SO(2)]:\, (2n-2,2) \text{ and }
	SO(n-1,2)/SO(n-2,2):\,\, (n-2,2).
$$

For $G_u = Sp(n)$, $n\geqq 2$, we have the following cases:
\begin{enumerate}
 \item $Sp(n;\R)/U(n)$ and $Sp(n)/U(n)$ with signature 
$(n^2+n,0)$.
 \item $Sp(m,n)/[Sp(m)\times Sp(n)]$ and $Sp(m+n)/[Sp(m)\times Sp(n)]$ with 
signature $(4mn,0)$.
 \item $Sp(n;\C)/Sp(n)$ with signature $(2n^2+n,0)$.
 \item $Sp(2;\R)/[Sp(1;\R)\times Sp(1;\R)]$ with signature $(2,2)$.
 \item $Sp(2;\R)/U(1,1)$ and $Sp(1,1)/U(1,1)$ with 
signature $(4,2)$.
 \item $Sp(2;\R)/Sp(1;\C)$ and $Sp(1,1)/Sp(1;\C)$ with signature $(3,1)$.
 \item We discuss the case $Sp(m-a+b,n-b+a)/[Sp(m-a,a)\times Sp(n-b,b)]$ with 
signature $(4mn-4(m-a)b-4(n-b)a,4(m-a)b+4(n-b)a)$. It is enough to discuss 
$4mn-4(m-a)b-4(n-b)a=4(m-a)(n-b)+4ab=0$ or $4(m-a)b+4(n-b)a=0$. It gives 
case (2) just above.
\end{enumerate}

Now we look for special signature in real form families where $G_u$ is a 
compact simple exceptional group.
\begin{enumerate}
\item $G_2^*/[SU(2)\times SU(2)]$ and $G_2/[SU(2)\times SU(2)]$ with signature $(8,0)$.


\item $F_{4,C_3A_1}/[Sp(3)\times SU(2)]$ and $F_4/[Sp(3)\times SU(2)]$ with signature $(28,0)$.

\item $F_{4,B_4}/SO(9)$ and $F_4/SO(9)$ with signature $(16,0)$.


\item $E_{6,C_4}/Sp(4)$ and $E_6/Sp(4)$ with signature $(42,0)$.

\item $E_{6,A_5A_1}/[SU(6)\times SU(2)]$ and $E_6/[SU(6)\times SU(2)]$ with signature $(40,0)$.

\item $E_{6,D_5T_1}/[SO(10)\times T_1]$ and $E_6/[SO(10)\times T_1]$ with signature $(32,0)$.

\item $E_{6,F_4}/F_4$ and $E_6/F_4$ with signature $(26,0)$.


\item $E_{7,A_7}/SU(8)$ and $E_7/SU(8)$ with signature $(70,0)$.

\item $E_{7,D_6A_1}/[SO(12)\times SU(2)]$ and $E_7/[SO(12)\times SU(2)]$ with signature $(64,0)$.

\item $E_{7,E_6T_1}/[E_6\times T_1]$ and $E_7/[E_6\times T_1]$ with signature $(54,0)$.


\item $E_{8,D_8}/SO(16)$ and $E_8/SO(16)$ with signature $(128,0)$.

\item $E_{8,E_7A_1}/[E_7\times SU(2)]$ and $E_8/[E_7\times SU(2)]$ with signature $(112,0)$.
\end{enumerate}

Finally, we tabulate the results according to special signature.  As indicated 
earlier, the semisimple riemannian symmetric spaces are (up to local isometry)
the products of spaces from Table 5.1, the semisimple lorentzian
spaces are (up to local isometry) the products of spaces from Table 5.1 and
one space from Table 5.2, and the semisimple trans--lorentzian  spaces are
(up to local isometry) the products of spaces from Table 5.1 and either one 
space from Table 5.3 or two spaces from Table 5.2.

\addtocounter{equation}{1}
{\tiny
\begin{longtable}{|c|l|l|}
\caption*{\centerline{\large \bf Table \thetable \,
Weakly Symmetric Pseudo--Riemannian $G/H$,}}\\ 
\caption*{\centerline{\large \bf $G$ Semisimple and $H$ Reductive, of Riemannian Signature}}\\
\hline
{\normalsize Type of $\gg$} 
        &{ \normalsize $G/H$: irreducible cases of riemannian signature} 
	& {\normalsize metric signature} \\
\hline
\hline
\endfirsthead
\multicolumn{3}{l}{\textit{table continued from previous page $\dots$}} \\
\hline
{\normalsize Type of $\gg$} 
	&{ \normalsize $G/H$: irreducible cases with riemannian signature} 
	& {\normalsize metric signature} \\
\hline
\hline
\endhead
\hline \multicolumn{2}{r}{\textit{$\dots$ table continued on next page}} \\
\endfoot
\hline
\endlastfoot
\hline
A & $SU(m+n)/[SU(m)\times SU(n)] \text{ and } SU(m,n)/[SU(m)\times SU(n)]$ 
	& $(2mn+1,0)$ \\
\hline
D & $SO(2n)/SU(n)\text{ and } SO^*(2n)/SU(n)$ & $(n(n - 1) + 1, 0)$\\
\hline
E & $E_6/Spin(10) \text{ and } E_{6,D_5T_1}/Spin(10)$ & $(33,0)$\\
\hline
A & $SU(2n+1)/Sp(n) \text{ and } SU(2n,1)/Sp(n)$ & $(2n^2+3n,0)$\\
\hline
A & $SU(2n+1)/[Sp(n)\times U(1)] \text{ and } SU(2n,1)/[Sp(n)\times U(1)]$ & 
        $(2n^2+3n-1,0)$ \\
\hline
B & $Spin(7)/G_2$ &  $(7,0)$ \\
\hline 
G & $G_2/SU(3)$ &  $(6,0)$ \\
\hline
D & $SO(10)/[Spin(7)\times SO(2)] \text{ and } SO(8,2)[Spin(7)\times SO(2)]$ & 
	$(23,0)$ \\
\hline 
B & $SO(9)/Spin(7) \text{ and } SO(8,1)/Spin(7)$ & $(15,0)$ \\
\hline
D & $Spin(8)/G_2 \text{ and } Spin(7,1)/G_2$ & $(14,0)$ \\
\hline
B & $SO(2n+1)/U(n) \text{ and } SO(2n,1)/U(n)$ & $(n^2 + n,0)$ \\
\hline
C & $Sp(n)/[Sp(n-1)\times U(n)] \text{ and } Sp(n-1,1)/[Sp(n-1)\times U(n)]$ & 
	$(4n-2,0)$ \\
\hline
A+A & $[SU(n)\times SU(n+1)]/[SU(n)\times U(1)] \text{ and }
[SU(n)\times SU(n,1)]/[SU(n)\times U(1)]$ & $(n^2+2n-1,0)$ \\
\hline
C+C & $[Sp(n,2)\times Sp(2)]/[Sp(n)\times Sp(2)]\text{ and } 
        [Sp(n+2)\times Sp(2)]/[Sp(n)\times Sp(2)]$ &
	$(8n+10,0)$ \\
\hline
B+D & $[SO(n)\times SO(n,1)]/SO(n) \text{ and } [SO(n)\times SO(n+1)]/SO(n)$ &
	$(\frac{n(n+1)}{2},0)$ \\
\hline
A+C &
$\tfrac{SU(n+2)\times Sp(m+1)}{U(n)\times SU(2)\times Sp(m)},\,
\tfrac{SU(n,2)\times Sp(m+1)}{U(n)\times SU(2)\times Sp(m)},\,
\tfrac{SU(n+2)\times Sp(m,1)}{U(n)\times SU(2)\times Sp(m)},\,
\tfrac{SU(n,2)\times Sp(m,1)}{U(n)\times SU(2)\times Sp(m)}$ &
$(4n+4m+3,0)$ \\
\hline
A+C &
$\tfrac{SU(n+2)\times Sp(m+1)}{SU(n)\times SU(2)\times Sp(m)},\,
\tfrac{SU(n,2)\times Sp(m+1)}{SU(n)\times SU(2)\times Sp(m)},\, 
\tfrac{SU(n+2)\times Sp(m,1)}{SU(n)\times SU(2)\times Sp(m)},\, 
\tfrac{SU(n,2)\times Sp(m,1)}{SU(n)\times SU(2)\times Sp(m)}$ &
$(4n+4m+4,0)$ \\
\hline
C+C &
$\tfrac{Sp(n+1)\times Sp(m+1)}{Sp(n)\times Sp(1)\times Sp(m)},\,
\tfrac{Sp(n,1)\times Sp(m+1)}{Sp(n)\times Sp(1)\times Sp(m)},\,
\tfrac{Sp(n,1)\times Sp(m,1)}{Sp(n)\times Sp(1)\times Sp(m)},\,
\tfrac{Sp(n+1)\times Sp(m,1)}{Sp(n)\times Sp(1)\times Sp(m)}$ &
$(4n+4m+3,0)$ \\
\hline
C+C &
$[Sp(1;\C) \times Sp(m+1)]/[Sp(1)\times Sp(m)] \text{ and } 
[Sp(1;\C) \times Sp(m,1)]/[Sp(1)\times Sp(m)]$ &
$(4m+6,0)$ \\
\hline
C+C+C &
$\begin{smallmatrix}
\tfrac{Sp(n+1)\times Sp(\ell+1)\times Sp(m+1)}{Sp(n)\times Sp(\ell) 
	\times Sp(m)\times Sp(1)},\,
\tfrac{Sp(n+1)\times Sp(\ell+1)\times Sp(m,1)}{Sp(n)\times Sp(\ell) 
	\times Sp(m)\times Sp(1)},\,
\tfrac{Sp(n+1)\times Sp(\ell,1)\times Sp(m+1)}{Sp(n)\times Sp(\ell) 
	\times Sp(m)\times Sp(1)} \\
\tfrac{Sp(n+1)\times Sp(\ell,1)\times Sp(m,1)}{Sp(n)\times Sp(\ell) 
	\times Sp(m)\times Sp(1)},\,
\tfrac{Sp(n,1)\times Sp(\ell+1)\times Sp(m+1)}{Sp(n)\times Sp(\ell) 
	\times Sp(m)\times Sp(1)},\,
\tfrac{Sp(n,1)\times Sp(\ell+1)\times Sp(m,1)}{Sp(n)\times Sp(\ell) 
	\times Sp(m)\times Sp(1)} \\
\tfrac{Sp(n,1)\times Sp(\ell,1)\times Sp(m+1)}{Sp(n)\times Sp(\ell) 
	\times Sp(m)\times Sp(1)},\,
\tfrac{Sp(n,1)\times Sp(\ell,1)\times Sp(m,1)}{Sp(n)\times Sp(\ell) 
	\times Sp(m)\times Sp(1)} \end{smallmatrix}$ & 
$(4n+4\ell+4m+6,0)$ \\
\hline
C+C+C &
$\begin{smallmatrix}
\tfrac{Sp(n+1)\times Sp(1,1)\times Sp(m+1)}     
        {Sp(n)\times Sp(1)\times Sp(1)\times Sp(m)},\,
\tfrac{Sp(n,1)\times Sp(1,1)\times Sp(m+1)}
        {Sp(n)\times Sp(1)\times Sp(1)\times Sp(m)},\,
\tfrac{Sp(n,1)\times Sp(1,1)\times Sp(m,1)}
        {Sp(n)\times Sp(1)\times Sp(1)\times Sp(m)}\\ 
\tfrac{Sp(n+1)\times Sp(1,1)\times Sp(m,1)}
        {Sp(n)\times Sp(1)\times Sp(1)\times Sp(m)},\,
\tfrac{Sp(n+1)\times Sp(2)\times Sp(m+1)}
        {Sp(n)\times Sp(1)\times Sp(1)\times Sp(m)},\,
\tfrac{Sp(n,1)\times Sp(2)\times Sp(m+1)}
        {Sp(n)\times Sp(1)\times Sp(1)\times Sp(m)}\\
\tfrac{Sp(n,1)\times Sp(2)\times Sp(m,1)}
        {Sp(n)\times Sp(1)\times Sp(1)\times Sp(m)},\,
\tfrac{Sp(n+1)\times Sp(2)\times Sp(m,1)}
        {Sp(n)\times Sp(1)\times Sp(1)\times Sp(m)}\end{smallmatrix}$ &
$(4n+4m+10,0)$ \\
\hline
various & see discussion of Case (21) & see Case (21) \\
\hline
A+A & $[SU(n)\times SU(n)]/diag(SU(n))$ and $SL(n;\C)/SU(n)$ & $(n^2-1,0)$ \\
\hline
BD+BD & $[SO(n)\times SO(n)]/diag(SO(n))$ and $SO(n;\C)/SO(n)$ 
	& $(n(n-1)/2,0)$ \\
\hline
C+C & $[Sp(n)\times Sp(n)]/diag(Sp(n))$ and $Sp(n;\C)/Sp(n)$ & $(2n^2+n,0)$ \\
\hline
G+G & $[G_2 \times G_2]/diag(G_2)$ and $G_{2,\C}/G_2$ & $(14,0)$ \\
\hline
F+F & $[F_4 \times F_4]/diag(F_4)$ and $F_{4,\C}/F_4$ & $(52,0)$ \\
\hline
E+E & $[E_6 \times E_6]/diag(E_6)$ and $E_{6,\C}/E_6$ & $(78,0)$ \\
\hline
E+E & $[E_7 \times E_7]/diag(E_7)$ and $E_{7,\C}/E_7$ & $(133,0)$ \\
\hline
E+E & $[E_8 \times E_8]/diag(E_8)$ and $E_{8,\C}/E_8$ & $(248,0)$ \\
\hline
A & $SL(n;\R)/SO(n)$ and $SU(n)/SO(n)$ & $(\frac{n^2+n}{2}-1,0)$ \\
\hline
A & $SU(2n)/Sp(n)$ and $SU(2n)^*/Sp(n)$ & $(2n^2-n-1,0)$ \\
\hline
A & $SU(m+n)/S(U(m)\times U(n))$ and $SU(m,n)/S(U(m)\times U(n))$ & $(2mn,0)$ \\
\hline
D & $SO(2n)/U(n)$ and $SO^*(2n)/U(n)$ & $(n^2-n,0)$ \\
\hline
BD & $SO(m+n)/[SO(m)\times SO(n)]$ and $SO(m,n)/[SO(m)\times SO(n)]$ & 
	$(mn,0)$ \\
\hline
C & $Sp(n)/U(n)$ and $Sp(n;\R)/U(n)$ & $(n^2+n,0)$\\
\hline
C & $Sp(m+n)/[Sp(m)\times Sp(n)]$ and $Sp(m,n)/[Sp(m)\times Sp(n)]$ 
	& $(4mn,0)$ \\
\hline
G & $G_2/[SU(2)\times SU(2)]$ and $G_{A_1A_1}/[SU(2)\times SU(2)]$ & $(8,0)$ \\
\hline
F & $F_4/Spin(9)$ and $F_{4,B_4}/Spin(9)$ & $(16,0)$ \\
\hline
F & $F_4/[Sp(3)\times SU(2)]$ and $F_{4,C_3A_1}/[Sp(3)\times SU(2)]$ & $(28,0)$ \\
\hline
E & $E_6/Sp(4)$ and $E_{6,C_4}/Sp(4)$ & $(42,0)$ \\
\hline
E & $E_6/[SU(6)\times SU(2)]$ and $E_{6,A_5A_1}/[SU(6)\times SU(2)]$ & $(40,0)$ \\
\hline
E & $E_6/[SO(10)\times SO(2)]$ and $E_{6,D_5T_1}/[SO(10)\times SO(2)]$ &
	$(32,0)$ \\
\hline
E & $E_6/F_4$ and $E_{6,F_4}/F_4$ & $(26,0)$ \\
\hline
E & $E_7/SU(8)$ and $E_{7,A_7}/SU(8)$ & $(70,0)$ \\
\hline
E & $E_7/[SO(12)\times SU(2)]$ and $E_{7,D_6A_1}/[SO(12)\times SU(2)]$ & 
	$(64,0)$ \\
\hline
E & $E_7/[E_6\times T_1]$ and $E_{7,E_6T_1}/[E_6\times T_1]$ & $(54,0)$ \\
\hline
E & $E_8/SO(16)$ and $E_{8,D_8}/SO(16)$ & $(128,0)$ \\
\hline
E & $E_8/[E_7\times SU(2)]$ and $E_{8,E_7A_1}/[E_7\times SU(2)]$ & $(112,0)$
\end{longtable}
}

\addtocounter{equation}{1}
{\tiny
\begin{longtable}{|c|l|l|}
\caption*{\centerline{\large \bf Table \thetable \,
Weakly Symmetric Pseudo--Riemannian $G/H$,}}\\
\caption*{\centerline{\large \bf $G$ Semisimple and $H$ Reductive, of Lorentz Signature}}\\
\hline
{\normalsize Type of $\gg$}
        &{ \normalsize $G/H$: irreducible cases of Lorentz signature}
        & {\normalsize metric signature} \\
\hline
\hline
\endfirsthead
\multicolumn{3}{l}{\textit{table continued from previous page $\dots$}} \\
\hline
{\normalsize Type of $\gg$}
        &{ \normalsize $G/H$: irreducible cases with Lorentz signature}
        & {\normalsize metric signature} \\
\hline
\hline
\endhead
\hline \multicolumn{2}{r}{\textit{$\dots$ table continued on next page}} \\
\endfoot
\hline
\endlastfoot
\hline
A & $SU(m+n)/[SU(m)\times SU(n)]$ and $SU(m,n)/[SU(m)\times SU(n)]$ &
	$(2mn,1)$ \\
\hline
D & $SO(2n)/SU(n)$ and $SO^*(2n)/SU(n)$ & $(n(n-1),1)$ \\
\hline
E & $E_6/Spin(10)$ and $E_{6,D_5T_1}/Spin(10)$ & $(32,1)$ \\
\hline
A & $SU(2n+1)/Sp(n)$ and $SU(2n,1)/Sp(n)$ & $(2n^2+3n-1,1)$ \\
\hline
A+C & $\tfrac{SU(n+2)\times Sp(m+1)}{SU(n)\times SU(2)\times Sp(m)},\,
\tfrac{SU(n,2)\times Sp(m+1)}{SU(n)\times SU(2)\times Sp(m)},\,
\tfrac{SU(n+2)\times Sp(m,1)}{SU(n)\times SU(2)\times Sp(m)},\,
\tfrac{SU(n,2)\times Sp(m,1)}{SU(n)\times SU(2)\times Sp(m)}$ &
$(4n+4m+3,1)$ \\
\hline
A+A & group manifold $SL(2;\R) = [SL(2;\R)\times SL(2;\R)]/diag(SL(2;\R))$ 
	viewed as Lorentz manifold & $(2,1)$ \\
\hline
A+A & $SL(2;\C)/SL(2;\R)$ viewed as Lorentz manifold & $(2,1)$ \\
\hline
A & $SL(2;\R)/\R$ & $(1,1)$ \\
\hline
A & $SU^*(4)/Sp(1,1)$ & $(4,1)$ \\
\hline
A & $SU(2,2)/Sp(1,1)$ & $(4,1)$ \\
\hline
BD & $SO(n,1)/SO(n-1,1) \text{ and } SO(n-1,2)/SO(n-1,1)$ & $(n-1,1)$ \\
\hline
C & $Sp(2;\R)/Sp(1;\C) \text{ and } Sp(1,1)/Sp(1;\C)$ &  $(3,1)$ \\
\hline
\end{longtable}
}

\addtocounter{equation}{1}
{\tiny
\begin{longtable}{|c|l|l|}
\caption*{\centerline{\large \bf Table \thetable \,
Weakly Symmetric Pseudo--Riemannian $G/H$,}}\\
\caption*{\centerline{\large \bf $G$ Semisimple and $H$ Reductive, of 
Trans--Lorentz Signature}}\\
\hline
{\normalsize Type of $\gg$}
        &{ \normalsize $G/H$: irreducible cases of Trans--Lorentz signature}
        & {\normalsize metric signature} \\
\hline
\hline
\endfirsthead
\multicolumn{3}{l}{\textit{table continued from previous page $\dots$}} \\
\hline
{\normalsize Type of $\gg$}
        &{ \normalsize $G/H$: irreducible cases with Trans--Lorentz signature}
        & {\normalsize metric signature} \\
\hline
\hline
\endhead
\hline \multicolumn{2}{r}{\textit{$\dots$ table continued on next page}} \\
\endfoot
\hline
\endlastfoot
\hline
A & $SL(3;\R)/SL(2;\R)$ & $(3,2)$ \\
\hline
A & $SU(n-1,2)/SU(n-1,1)$ and $SU(n,1)/SU(n-1,1)$ & $(2n-1,2)$ \\
\hline
G & $G_{2,A_1A_1}/SU(1,2)$ & $(4,2)$\\
\hline
B & $SO(4,1)/U(2)$, $SO(5)/U(2)$ and $SO(3,2)/U(1,1)$ & $(4,2)$ \\
\hline
C & $Sp(n)/[Sp(n-1)\times U(1)]\text{ and } Sp(n-1,1)/[Sp(n-1)\times U(1)]$ &
	$(4n-4,2)$ \\
\hline
BD & $[SO(2,1)\times SO(2,2)]/SO(2,1)\text{ and } 
	[SO(2,1)\times SO(3,1)]/SO(2,1)$ & $(4,2)$ \\
\hline
A+A & group manifold $SL(2;\R) = [SL(2;\R)\times SL(2;\R)]/diag(SL(2;\R))$
        viewed as Trans--Lorentz manifold & $(1,2)$ \\
\hline
A & $SL(2;\C)/SO(2;\C)$ & $(2,2)$ \\
\hline
A & $SL(2;\C)/SL(2;\R) = SL(2;\C)/SU(1,1))$ & $(1,2)$ \\
\hline
A & $SL(3;\R)/[SL(2;\R)\times R]$ & $(2,2)$ \\
\hline
A & $SL(4;\R)/Sp(2;\R)$ & $(3,2)$ \\
\hline
A & $SL(4;\R)/GL'(2;\C)$ and $SU^*(4)/GL'(2;\C)$ 
        & $(6,2)$ \\
\hline
A & $SU(2,1)/SO(2,1)$ & $(3,2)$ \\
\hline
A & $SU(2,2)/Sp(2;\R)$ & $(3,2)$ \\
\hline
A & $SU(n-1,2)/U(n-1,1)$ and $SU(n,1)/U(n-1,1)$ & $(2n-2,2)$ \\
\hline
BD & $SO(n-3,3)/[SO(n-3,1)\times SO(2)]$ and $SO(n-1,1)/[SO(n-3,1)\times SO(2)]$
        & $(2n-6,2)$ \\
\hline
BD & $SO(n-2,3)/SO(n-2,2)$ and $SO(n-1,2)/SO(n-2,2)$ & $(n-2,2)$ \\
\hline
C & $Sp(2;\R)/[Sp(1;\R)\times Sp(1;\R)]$ & $(2,2)$ \\
\hline
C & $Sp(2;\R)/U(1,1)$ and $Sp(1,1)/U(1,1)$  
        &  $(4,2)$ \\
\hline
\end{longtable}
}

\end{document}